\theoremstyle{plain}
\newtheorem{theorem}{Theorem}[section]
\newtheorem{lemma}[theorem]{Lemma}
\newtheorem{proposition}[theorem]{Proposition}
\newtheorem{corollary}[theorem]{Corollary}
\newtheorem*{question}{Question}
\newtheorem*{KDT}{Koebe distortion theorem}
\newtheorem*{KOT}{Koebe one-quarter theorem}
\newtheorem*{thmA}{Theorem A}
\newtheorem*{thmB}{Theorem B}
\newtheorem*{thmC}{Theorem C}
\newtheorem*{corD}{Corollary D}
\theoremstyle{definition}
\newtheorem{definition}[theorem]{Definition}
\theoremstyle{remark}
\newtheorem{remark}[theorem]{Remark}
\newcommand \C{\mathbb{C}}
\newcommand \N{\mathbb{N}}
\newcommand \R{\mathbb{R}}
\newcommand \D{\mathbb{D}}
\newcommand \Z{\mathbb{Z}}
\newcommand \Q{\mathbb{Q}}
\newcommand{\bd}{\partial}
\DeclareMathOperator{\Sing}{Sing}
\DeclareMathOperator{\diam}{diam}
\DeclareMathOperator{\dist}{dist}
\DeclareMathOperator{\asinh}{asinh}
\newcommand{\dimhyp}{\dim_{hyp}}
\newcommand{\clC}{\overline{\C}}
\renewcommand{\Re}{\textup{Re}}
\renewcommand{\Im}{\textup{Im}}
\begin{document}

\title[Dimension of boundaries of attracting basins of entire maps]{On the dimension of the boundaries of attracting basins of entire maps}

\author[K. Bara\'nski]{Krzysztof Bara\'nski}
\address{University of Warsaw, Institute of Mathematics, ul.~Banacha 2, 02-097 Warszawa, Poland} 
\email{baranski@mimuw.edu.pl}

\author[B. Karpi\'nska]{Bogus{\l}awa Karpi\'nska}
\address{Faculty of Mathematics and Information Science, Warsaw University of Technology, ul.~Koszy\-kowa 75, 00-662 Warszawa, Poland}
\email{boguslawa.karpinska@pw.edu.pl}

\author[D. Mart\'i-Pete]{David Mart\'i-Pete}
\address{Department of Mathematical Sciences, University of Liverpool,  Liverpool L69 7ZL, United Kingdom} 
\email{david.marti-pete@liverpool.ac.uk}

\author[L. Pardo-Sim\'on]{Leticia Pardo-Sim\'on}
\address{Departament de Matem\`atiques i Inform\`atica, Universitat de Barcelona, Gran Via de les Corts Catalanes 585, 08007 Barcelona, Spain
\newline  
and
\newline
Centre de Recerca Matem\`atica, Campus de Bellaterra, Edifici C, 08193 Bellaterra, Barcelona, Spain}
\email{lpardosimon@ub.edu}

\author[A. Zdunik]{Anna Zdunik}
\address{University of Warsaw, Institute of Mathematics, ul. Banacha 2, 02-097 Warszawa, Poland} 
\email{A.Zdunik@mimuw.edu.pl}

\thanks{This research was funded in whole or in part by the National Science Centre, Poland, grants no.~2023/49/B/ST1/03015 (AZ) and 2023/51/B/ST1/00946 (KB \& BK), and the Spanish State Research Agency, grant PID2023-147252NB}

\subjclass[2010]{Primary 37F10, 37F35, 30D05, 30D40. Secondary 28A78.}

\begin{abstract} Let $f\colon \C \to \C$ be a transcendental entire map from the Eremenko-Lyubich class~$\mathcal B$, and let $\zeta$ be an attracting periodic point of period $p$. We prove that the boundaries of components of the attracting basin of (the orbit of) $\zeta$ have hyperbolic (and, consequently, Hausdorff) dimension larger than~$1$, provided $f^p$ has an infinite degree on an immediate component $U$ of the basin, and the singular set of $f^p|_U$ is compactly contained in $U$. The same holds for the boundaries of components of the basin of a parabolic $p$-periodic point $\zeta$, under the additional assumption $\zeta \notin \overline{\Sing(f^p)}$. We also prove that if an immediate component of an attracting basin of an arbitrary transcendental entire map is bounded, then the boundaries of components of the basin have hyperbolic dimension larger than~$1$. This enables us to show that the boundary of a component of an attracting basin of a transcendental entire function is never a smooth or rectifiable curve. The results provide a partial answer to a question from Hayman's list of problems in function theory.
\end{abstract}

\maketitle

\section{Introduction and results}\label{sec:intro}

For an entire function $f\colon \C \to \C$, its set of stability $\mathcal F(f)$, known as the \emph{Fatou set}, is defined as the set of points $z \in \C$, for which the family $\{f^n\}_{n=1}^\infty$ of the forward iterates of~$f$ is normal in some neighbourhood of $z$; the \emph{Julia set} $J(f)$, which is the locus of chaotic behaviour of $f$, is the complement of $\mathcal F(f)$. The set $\mathcal F(f)$ is open, and each of its connected components is called a \emph{Fatou component} of~$f$. We say that $U$ is a \emph{periodic} Fatou component of period $p\ge 1$, if $f^p(U)\subset U$, and $U$ is \emph{invariant} if $f(U)\subset U$. If $U$ is a Fatou component which is mapped into a periodic component by a forward iterate of $f$, then we say that $U$ is \emph{preperiodic}; otherwise it is called a \emph{wandering domain}. In this paper we study the boundaries of periodic or preperiodic Fatou components, which are parts of \emph{attracting} or \emph{parabolic basins} of periodic points of $f$. Recall that a periodic point $\zeta \in \C$, with $f^p(\zeta) = \zeta$ for some $p \ge 1$, is \emph{attracting} if $|(f^p)'(\zeta)| < 1$, and \emph{parabolic} if $(f^p)'(\zeta) = e^{2\pi i \theta}$, $\theta \in \Q$. The \emph{attracting} (resp.~ \emph{parabolic}) \emph{basin} of the orbit of $\zeta$ (or, in short, of $\zeta$) is defined as the set of points $z \in \C$, such that $f^{pn}(z)\to f^j(\zeta)$ for some $j \in \{0, \ldots, p-1\}$ as $n\to\infty$. An \emph{immediate} component of an attracting (resp.~parabolic) basin is its (periodic) component $U$, such that $U$ (resp.~$\bd U$) contains an element of the orbit of $\zeta$, where $\bd U$ denotes the boundary of $U$.

The Hausdorff dimension (denoted by $\dim_H$) of the Julia sets of transcendental entire maps has been widely studied, starting from the seminal paper by McMullen \cite{mcmullen87} on the exponential family $E_\lambda(z) = \lambda e^z$, $\lambda \in \C \setminus \{0\}$. 
By a result of Baker \cite{baker75}, the boundary of every Fatou component $U$ of a transcendental entire function $f$ contains non-trivial continua, so $\dim_H \partial U\geq 1$, and therefore $\dim_H J(f)\geq 1$. It was a long-standing open question whether there exists a transcendental entire map $f$ with $\dim_H J(f)=1$. This was answered in the affirmative by Bishop \cite{bishop18}. In \cite{stallard91,stallard04}, Stallard presented examples of transcendental entire maps $f$ with $\dim_H J(f) = t$ for every given $t \in (1,2)$. As in \cite{mcmullen87} it was proved that $\dim_H J(E_\lambda) = 2$ for every $\lambda \in \C \setminus \{0\}$, one can see that the Hausdorff dimension of the Julia set of a transcendental entire function can attain any value in the interval $[1,2]$. 

The functions studied in \cite{stallard91,stallard04}, as well as the exponential maps, belong to the well-known \emph{Eremenko--Lyubich class} $\mathcal B$, that consists of all transcendental entire maps $f$, for which the set of its critical and asymptotic values, denoted by $\Sing(f)$, is bounded. In~\cite{stallard2}, Stallard showed that $\dim_H J(f)>1$ for every $f\in\mathcal{B}$. Note that $\dim_H J(f)=2$ for $f \in \mathcal B$ with a finite or `infinite, but not too large' order of growth, as proved in \cite{dimpar,bergweiler-karpinska-stallard09,schubert}.

In the context of the dynamics of holomorphic maps, it is natural to consider also another kind of dimension, namely the \emph{hyperbolic dimension}. Recall that the hyperbolic dimension of a set $A\subset J(f)$ is defined as
\[
\dimhyp A = \sup\{\dim_H X \colon X \subset A \textup{ is an expanding conformal repeller}\}.
\]
Here, by an expanding conformal repeller we mean a compact forward-invariant set $X$ such that $f$ is \emph{uniformly expanding} on $X$, that is, there exists $k\in\mathbb{N}$ with $|(f^k)'|> 1$ on $X$. Obviously, $\dimhyp A \leq \dim_H A$. However, the two dimensions need not coincide. Indeed, there exist transcendental entire maps $f$, for which $\dimhyp J(f)$ is an arbitrary small positive number (see \cite{bergweiler12}).
In \cite{urbanski-zdunik03}, Urba\'nski and Zdunik proved that for exponential maps $E_\lambda$ with an attracting periodic point, there holds $\dim_{hyp} J(E_\lambda) < \dim_H J(E_\lambda) = 2$. In fact, more examples of this kind can be found among \emph{hyperbolic} transcendental entire maps, that is, maps $f$ for which the closure of the \emph{post-singular set} $\overline{\bigcup_{n=0}^\infty f^n(\Sing(f))}$ is bounded and disjoint from $J(f)$ (see e.g.~\cite{stallard99}). On the other hand, there are examples of hyperbolic transcendental entire maps $f$, where $\dim_{hyp} J(f) = \dim_H J(f) = 2$ (see \cite{rempe-gillen14}). A useful fact is that the hyperbolic dimension of the Julia set of a hyperbolic transcendental entire map $f$ is equal to the (generalized) zero of the topological pressure function, as proved in \cite{bowen}. In \cite{logtract}, it was shown that the hyperbolic dimension of the Julia set is larger than~$1$ for transcendental entire maps from class $\mathcal B$ and, more generally, for maps with a logarithmic tract.

Much less is known about the Hausdorff and hyperbolic dimension of the boundaries of particular Fatou components, which are subsets of the Julia set. Note that if a Fatou component $U$ is completely invariant, then the Julia set of $f$ coincides with the boundary of~$U$ and, consequently, $\dim_H J(f) = \dim_H \bd U$. Such a situation occurs, for example, within the exponential family, if the map $E_\lambda$ has an attracting fixed point $\zeta$, which happens e.g.~for $\lambda \in (0,1/e)$. Recall that in this case $\dim_H J(f) = \dim_H \bd U = 2$, where $U$ is the unique component of the attracting basin of $\zeta$. Nevertheless, the dimension of the boundaries of Fatou components can be smaller than the dimension of the whole Julia set. In \cite{basexp}, Bara\'nski, Karpi\'nska and Zdunik proved that if the exponential map $E_\lambda$ has an attracting periodic point $\zeta$ of period $p>1$, then $1 < \dim_H \bd U < \dim_H J(E_\lambda) = 2$ for every component $U$ of the attracting basin of $\zeta$. However, outside the exponential family, little is known on the dimension of the boundaries of Fatou components of transcendental entire maps. 

In 1989, contributing to the famous Hayman's list of problems in function theory, Hamilton asked (\cite[Problem~2.76]{Hayman-problems1989}, see also the 2019 edition \cite{Hayman-problems2019}), whether the boundary of a Fatou component of an entire function has Hausdorff dimension larger than~$1$ unless it is a circle or a line. Given that examples of Siegel disks (that is, simply connected $p$-periodic Fatou components, where $f^p$ is conformally conjugate to an irrational rotation) and wandering domains of transcendental entire maps with boundaries of Hausdorff dimension equal to~$1$ have been found (see, respectively, \cite{avila-buff-cheritat04,geyer08} and  \cite{bishop18,bocthaler21}), the following question arises naturally.

\begin{question}
Let $U$ be a component of an attracting or parabolic basin of a transcendental entire function. Is it true that $\dim_H \partial U>1$? 
\end{question}

One can naturally distinguish three types of $p$-periodic (immediate) components $U$ of attracting or parabolic basins of entire transcendental maps $f$: the ones with $\deg f^p|_U = \infty$, which are necessarily unbounded, the unbounded ones with $\deg f^p|_U < \infty$, and the bounded ones, which always satisfy $\deg f^p|_U < \infty$ (by $\deg$ we denote the degree of a map, cf.~Proposition~\ref{prop:swiss}). All the three types of basins actually appear among maps from class~$\mathcal B$, see Figure~\ref{fig:three_types}.

\begin{figure}[ht!]
\includegraphics[width=.315\textwidth]{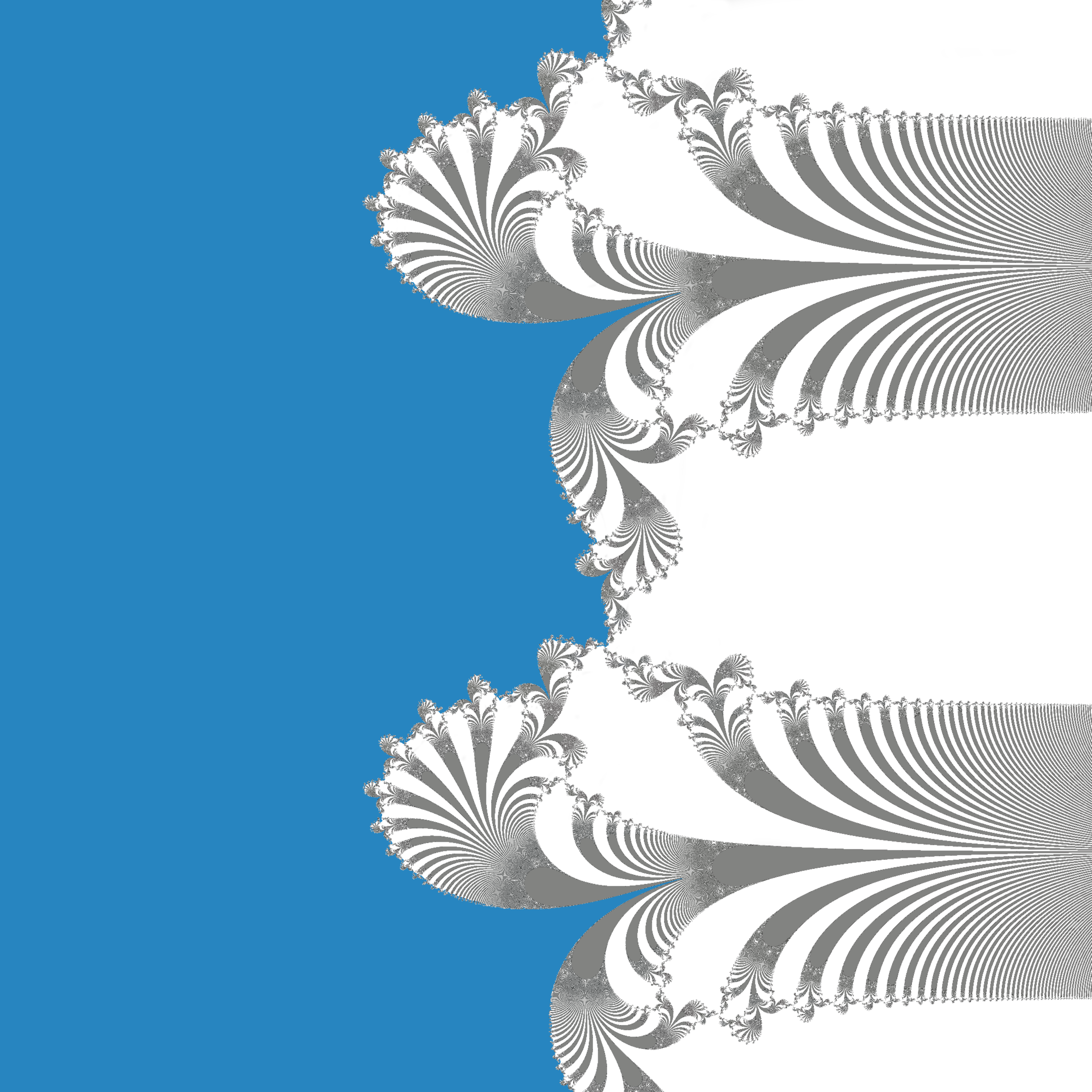} \:
\includegraphics[width=.315\textwidth]{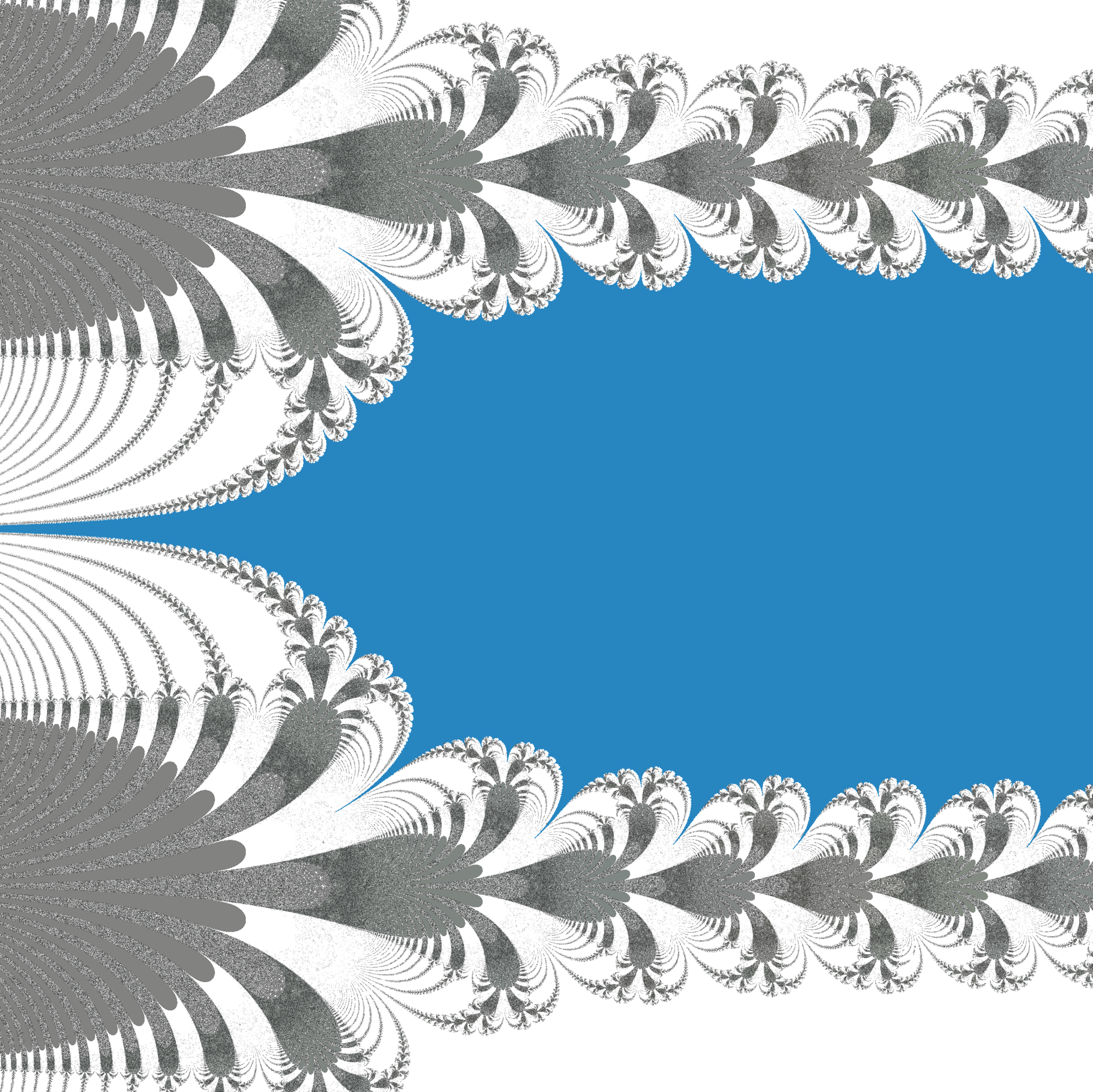} \:
\includegraphics[width=.315\textwidth]{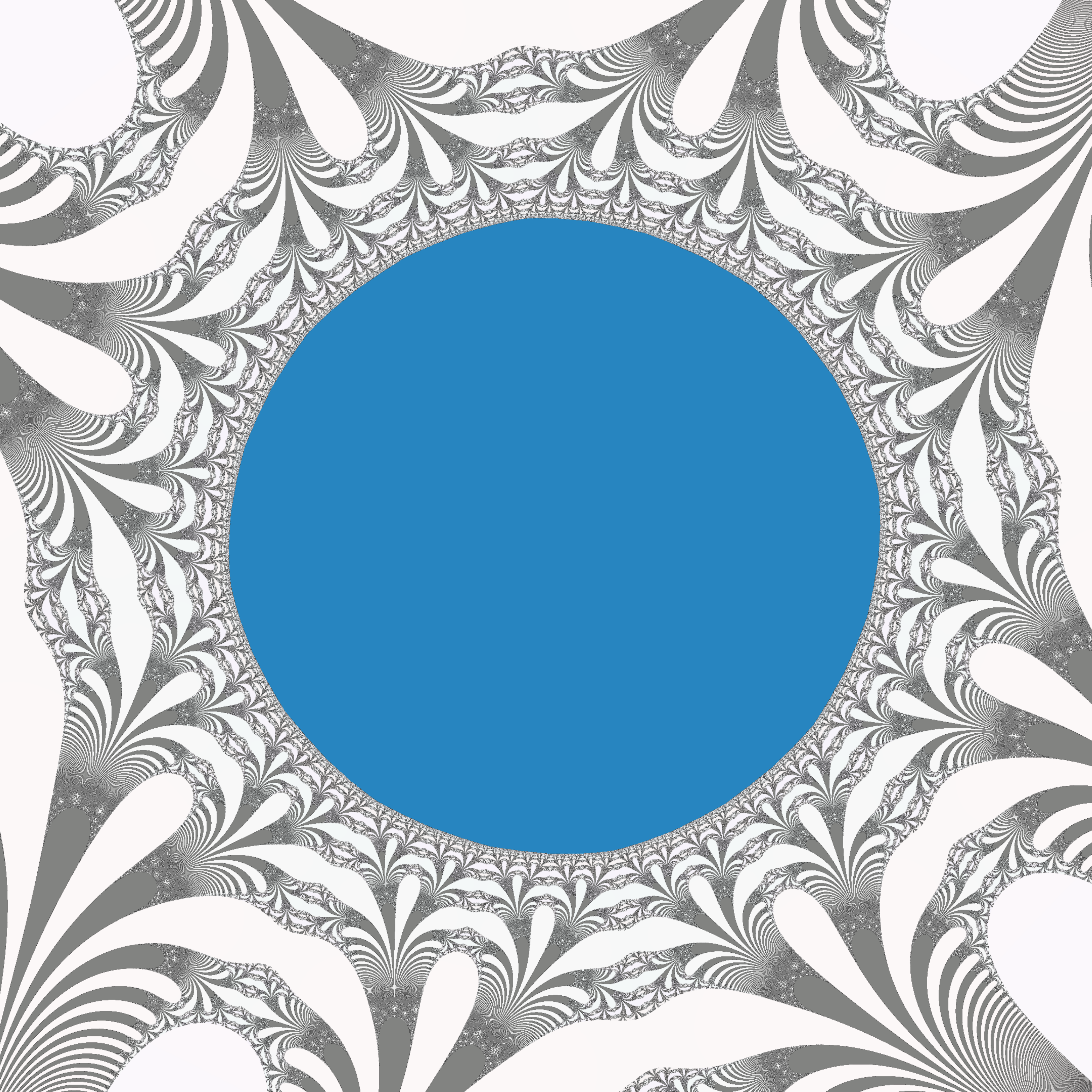} 
\caption{The three types of periodic components of attracting basins of entire transcendental maps. Left: an unbounded periodic component $U$ of period $3$ with $\deg f^3|_U = \infty$ for $f(z) = (2+\pi i) e^z$ (see \cite{bhattacharjee-devaney00}). Center: an unbounded invariant component $U$ with $\deg f|_U = 2$ for $f(z)=e^{-z}+z-1$ (see \cite{baker-dominguez,weinreich}). Right: a bounded invariant component $U$ with $\deg f|_U = 2$ for $f(z)= (-21+3i) z^2e^z$ (see \cite{garijo-jarque-morenorocha10}).}
\label{fig:three_types}
\end{figure}

In this paper we answer the above question in the affirmative for components of attracting and parabolic basins with an immediate component of the first type (for maps from class~$\mathcal B$, under a natural hyperbolicity condition) and for components of attracting basins with an immediate component of the third type (for arbitrary transcendental entire  maps). In fact, we show that the boundaries of the considered basins have hyperbolic dimension larger than~$1$. Recall that we write $\Sing(g)$ for the set of singular (critical or asymptotic) values of a holomorphic map $g$ defined on some domain in $\C$. Our main result is the following.

\begin{thmA}
Let $f\colon \C \to \C$ be a transcendental entire function from class~$\mathcal B$, and let $\zeta \in \C$ be an attracting periodic point of period $p\ge 1$. Suppose that for an immediate component $U$ of the attracting basin of $\zeta$, the degree of $f^p$ on $U$ is infinite and $\overline{\Sing(f^p|_U)}$ is a compact subset of~$U$. Then the boundary of each component of the basin of $\zeta$ has hyperbolic dimension larger than~$1$.
\end{thmA}

An analogous result for the parabolic basins is as follows.

\begin{thmB}
Let $f\colon \C \to \C$ be a transcendental entire function from class~$\mathcal B$, and let $\zeta \in \C$ be a parabolic periodic point of period $p\ge 1$. Suppose that for an immediate component $U$ of the parabolic basin of $\zeta$, the degree of $f^p$ on $U$ is infinite, $\overline{\Sing(f^p|_U)}$ is a compact subset of $U$, and $\zeta \notin \overline{\Sing(f^p)}$. 
Then the boundary of each component of the basin of $\zeta$ has hyperbolic dimension larger than~$1$.
\end{thmB}

\begin{remark}\label{rem:event_hyp}
In fact, the proofs of Theorems~A and~B show that in both cases, for every $r>0$ there exists an expanding conformal repeller $X \subset \{z \in \C: |z| > r\}$ of Hausdorff dimension larger than~$1$, contained in $\bd U$. Therefore, the \emph{eventual hyperbolic dimension} (in the sense of \cite{dezotti}) of the boundary of $U$ is at least~$1$.
\end{remark}

The proofs of Theorems~A and~B proceed by finding a suitable expanding conformal repeller $X \subset \partial U$ of Hausdorff dimension larger than~$1$. To this end, we follow a general idea of constructing such repellers in the Julia set, introduced in \cite{logtract}, which was later used and developed e.g.~in \cite{basexp,bergweiler-peter13,VolkerMayer,waterman20}. However, to ensure that the repeller $X$ is contained in $\partial U$, we need a careful modification of the construction, based on an analysis of topological and combinatorial structure of the considered basin.

\begin{remark}\label{rem:from_above}
In \cite{basexp} it was proved that for an immediate component $U$ of the basin of an attracting periodic point of period $p > 1$ of an exponential map, the set of points in the boundary of $U$ which escape to infinity under iteration of the map, has Hausdorff dimension equal to~$1$. In \cite{Berg-Ding}, Bergweiler and Ding generalized this result to some class of transcendental entire functions of finite order with a finite set of singularities.
\end{remark}

Our next result concerns components of attracting basins of transcendental entire functions with a bounded immediate component $U$. Since in this case the map is defined on a neighbourhood of the closure of $U$ in the Riemann sphere $\clC$, the situation is more similar to the one appearing for polynomial and rational maps. In this context, Zdunik proved in \cite{zdunik90,zdunik91} that for a polynomial $f$ with a simply connected basin $U$ of infinity, the following dichotomy holds: either $\partial U$ is an analytic curve, or $\dim_H \partial U >1$. This result was later extended to arbitrary simply connected components of attracting basins of rational functions by Przytycki \cite[Theorem~A]{przytycki06}. In \cite{hamilton95}, Hamilton proved that if the Julia set of a rational function $f$ is a Jordan curve in $\clC$, then either it is an analytic curve, or $\dim_H J(f) > 1$.

A classical result of Brolin \cite[Lemma~9.1]{brolin65} says that if $U$ is an invariant component of an attracting basin of a rational map $f$, and $\partial U$ is an analytic Jordan curve or an analytic arc, then $\partial U$ is, respectively, a circle or an arc of a circle in $\clC$. These two cases actually appear within the space of rational maps, and correspond, respectively, to $f$ being conformally conjugate to a finite Blaschke product $B$ with an attracting fixed point, or $f$ being semiconjugate to $B$ by the map $\phi(z)=z+1/z$ (which includes the case when $f$ is a Chebyshev polynomial and $B(z)=z^d$ for some integer $d \ge 2$), see \cite[Lemma~15.5 and Chapter~7]{milnor06}. 

In the case when $\bd U$ is a Jordan curve, the Brolin result was generalized by Azarina \cite{Azarina89}, who showed that in fact, if $\gamma \subset \C$ is an invariant analytic Jordan curve for an entire map $f$, then $f$ is conformally conjugate to a finite Blaschke product, or $\gamma$ is a level curve of an invariant Siegel disc of $f$.

In this paper, combining the Brolin--Azarina result with a theorem of Przytycki \cite[Theorem~A']{przytycki06}, which generalizes Zdunik's dichotomy to the case of so-called RB-domains (see Definition~\ref{defn:RB}), we show the following.

\begin{thmC}
Let $f\colon \C \to \C$ be a transcendental entire function, and let $\zeta \in \C$ be an attracting periodic point, such that an immediate component of the attracting basin of $\zeta$ is bounded. Then the boundary of each component of the basin of $\zeta$ has hyperbolic dimension larger than~$1$.
\end{thmC}

Note that the proof of Theorem~C, which uses different tools, is independent of the proofs of Theorems~A and~B.

In the context of transcendental entire maps, one can find few examples of Fatou components with smooth boundaries. In the above-mentioned paper \cite{bishop18}, Bishop constructed a transcendental entire function with a multiply connected wandering domain $U$, whose boundary consists of countably many $C^1$-Jordan curves. It is also known that for several families of transcendental entire functions there exist Siegel disks, whose boundaries are $C^\infty$-Jordan curves (see \cite{avila-buff-cheritat04,geyer08}). Recently, Boc Thaler \cite{bocthaler21} showed that every regular simply connected domain whose closure has no bounded complementary components, is a wandering domain of some transcendental entire function, which provides new examples of wandering domains with smooth boundaries. 

For transcendental entire functions, Baker and Weinreich \cite[Theorem~4]{baker-weinreich91} (see also \cite{kisaka-loc_con}) proved that if $U$ is an unbounded immediate component of an attracting basin, then its boundary in the Riemann sphere is not a curve. This together with Theorem~C immediately implies the following.

\begin{corD} Let $f$ be a transcendental entire function and let $U$ be a component of the basin of an attracting periodic point of $f$. Then the boundary of $U$ is not a smooth or rectifiable curve. 
\end{corD}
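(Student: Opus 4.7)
The plan is to prove Corollary~D by contradiction: suppose $\partial U$ is a smooth or rectifiable curve, so in particular $\partial U$ has Hausdorff dimension $1$ and, viewed in $\Ch$, is a Jordan curve. I would then proceed by a short case analysis using the three ingredients already at our disposal. Since $U$ lies in an attracting basin, there is a minimal $n\ge 0$ with $U_\ast \defeq f^n(U)$ an immediate component of the basin.

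Suppose first that $U_\ast$ is bounded. Then Theorem~C gives $\dimhyp \partial U_\ast > 1$, and Remark~\ref{rem:other_comp}, which pulls back expanding conformal repellers along conformal branches of $f^{-n}$, yields $\dimhyp \partial U > 1$, hence $\dim_H \partial U > 1$, contradicting the assumption. In particular, every bounded $U$ falls in this case, since $\overline U$ is compact and continuity of $f^n$ forces $U_\ast \subset f^n(\overline U)$ to be bounded.

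Suppose instead $U_\ast$ is unbounded. Then $U$ itself is unbounded by the preceding compactness argument, and in fact $\partial U$ is unbounded in $\C$: otherwise $U$ would contain a neighbourhood of $\infty$, which is impossible since $\infty \in J(f)$. So $\infty \in \partial U$ in $\Ch$. If $U = U_\ast$ is itself an immediate component, Baker--Weinreich asserts directly that $\partial U$ in $\Ch$ is not a Jordan curve, contradicting the assumption. If $U$ is non-immediate, I would transport the non-Jordan obstruction from $\partial U_\ast$ back to $\partial U$: pick a simply connected subdomain $W \subset U_\ast$ avoiding the critical values of $f^n$ and approaching two distinct accesses to infinity of $\partial U_\ast$, and verify that a conformal inverse branch of $f^n$ on $W$ lifts these to two distinct accesses to infinity on $\partial U$, again contradicting that $\partial U$ is a Jordan curve in $\Ch$.

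The main obstacle is this last sub-case: when $f^n\colon U \to U_\ast$ has infinite degree there is no global inverse branch on $U_\ast$, so one must carefully select simply connected subdomains of $U_\ast$ on which inverse branches are well defined and check that distinct accesses to infinity at $\partial U_\ast$ indeed lift to distinct accesses to infinity at $\partial U$. All remaining sub-cases reduce cleanly either to the pull-back of Theorem~C via Remark~\ref{rem:other_comp}, or to a direct application of Baker--Weinreich.
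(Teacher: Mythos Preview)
Your decomposition matches the paper's terse derivation exactly: bounded $U_\ast$ is handled by Theorem~C together with Remark~\ref{rem:other_comp}, and unbounded immediate $U$ by Baker--Weinreich. You are also right to isolate the residual sub-case --- $U$ unbounded and non-immediate with $U_\ast$ unbounded --- as the one not literally covered by those two ingredients as stated; the paper's ``immediately implies'' does not spell it out.

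Your proposed repair for that sub-case, however, has gaps beyond the infinite-degree obstacle you flag. First, Baker--Weinreich asserts only that $\partial U_\ast$ in $\Ch$ fails to be a Jordan curve; deducing ``two distinct accesses to $\infty$'' from within $U_\ast$ already requires the sharper form of the result (e.g.\ that $\infty$ lies in the impression of a dense set of prime ends of $U_\ast$), which you would need to invoke explicitly. Second, and more seriously, an inverse branch of $f^n$ on a simply connected $W\subset U_\ast$ need not carry a curve in $W$ tending to $\infty$ to a curve in $U$ tending to $\infty$: the lift may instead converge to a finite asymptotic value of $f^n$, or accumulate on a bounded portion of $\partial U$. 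So the key implication ``distinct accesses to $\infty$ in $U_\ast$ lift to distinct accesses to $\infty$ in $U$'' is unjustified as written, and the difficulty is not merely the absence of a global inverse.

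One way to sidestep the issue entirely: if ``smooth or rectifiable curve'' is read as a Jordan curve in $\C$, such a curve is compact, so $\partial U$ is bounded; then by your own observation (via unboundedness of $J(f)$) $U$ is bounded, and Theorem~C with Remark~\ref{rem:other_comp} alone suffices. If instead one allows Jordan curves in $\Ch$ through $\infty$, a cleaner transfer in the non-immediate sub-case is via Riemann maps rather than lifting accesses: a Riemann map $\psi\colon\D\to U$ would extend to a homeomorphism $\overline\D\to\overline U$ in $\Ch$, so $f^n\circ\psi$ extends continuously to $\partial\D\setminus\{\psi^{-1}(\infty)\}$ with \emph{finite} boundary values in $\partial U_\ast$, and this can be played directly against the strong Baker--Weinreich conclusion on the boundary behaviour of a Riemann map of $U_\ast$.
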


As an example, consider a bounded immediate component $U$ of the basin of an attracting fixed point of the map  $f(z)= (-21+3i) z^2e^z$, studied in \cite{garijo-jarque-morenorocha10} (see the right-hand side of Figure~\ref{fig:three_types}). Although $U$ appears to have a smooth boundary, Corollary~D implies that $\bd U$ is neither smooth nor rectifiable.

\begin{remark}\label{rem:mero} Note that Corollary~D is not true for transcendental meromorphic maps. Indeed, for $f(z) = \lambda\tan z$, $\lambda > 1$, the upper and lower half-planes are completely invariant components of the  basins of attracting fixed points, and their common boundary $\R \cup \{\infty\}$, equal to the Julia set of $f$, is an analytic Jordan curve in the Riemann sphere. Concerning other types of Fatou components, 
Hamilton \cite{hamilton-rect} proved that there exist invariant Baker domains $U$ (that is, Fatou components where the iterates of the map tend to infinity) of meromorphic maps, such that $\bd U$ is a non-smooth rectifiable Jordan curve.
\end{remark}

The structure of the paper is as follows. Section~\ref{sec:prelim} contains the preliminaries.  The proof of Theorem~A, divided into several parts, is included in Sections~\ref{sec:top struct}--\ref{sec:est_der}. Since the proof is rather involved, before going into details, in Section~\ref{sec:summ_proofA} we present its short description. In Section~\ref{sec:parab} we explain how to adapt the proof of Theorem~A to the case of parabolic basins, which enables showing Theorem~B. Finally, in Section~\ref{sec:bounded} we prove Theorem~C, with a proof of the Brolin--Azarina lemma for attracting basins of transcendental entire maps included in the appendix.

\section*{Acknowledgements}
We are grateful to Lasse Rempe for informing us about Azarina's work \cite{Azarina89}, and to the referees of this paper for useful remarks and comments.
  
\section{Preliminaries}\label{sec:prelim}

\subsection{General notation}\label{subsec:notat} By $\overline{A}$ and $\bd A$ we denote, respectively, the closure and boundary (in $\C$) of a set $A \subset \C$. The open disc of radius $r>0$, centered at $z \in \C$, is denoted by $\D(z,r)$. The open unit disc is denoted by $\D$. For $x \in \R$ we write
\[
\mathbb H_{>x} = \{z\in\C: \Re(z) > x\}
\]
and define $\mathbb H_{<x}$, $\mathbb H_{\ge x}$, $\mathbb H_{\le x}$ analogously. We also set
\[
\Z_{\ge \sigma} = \{s \in \Z: s \ge \sigma\}
\]
for $\sigma \in \Z$. We denote by $\exp$ the exponential map $z \mapsto e^z$ on $\C$.

\subsection{Hyperbolic metric} \label{subsec:hyp} For a simply connected domain $V \subset \C$
we denote by $d\rho_V$ the \emph{hyperbolic metric} on $V$, which is a conformal Riemannian metric given by $d\rho_V = \frac{2|\varphi'(z)|}{1 - |\varphi(z)|^2}|dz|$, where $\varphi\colon V \to \D$ is a Riemann map. The Schwarz--Pick lemma (see e.g.~\cite[Theorem~I.4.1]{carlesongamelin}), asserts that for a holomorphic map $g\colon V \to V$, one has $\rho_V(g(z_1), g(z_2)) \le \rho_V(z_1, z_2)$, $z_1, z_2 \in V$, for the \emph{hyperbolic distance} $\rho_V$ determined by the metric $d\rho_V$, where the inequality is strict unless $g$ is univalent.

\subsection{Distortion of univalent maps and Koebe's theorems} \label{subsec:koebe} The \emph{distortion} of a univalent map $g$ on a set $A \subset \C$ is defined as $\sup_{z_1, z_2 \in A} \frac{|g'(z_1)|}{|g'(z_2)|}$. We recall the classical Koebe distortion and one-quarter theorems (see e.g.~\cite[Theorems~I.1.3 and~I.1.6]{carlesongamelin}).

\begin{KDT}
Let $g\colon \D(z_0,r) \to \C$ be a univalent holomorphic map, for some $z_0 \in\C$ and $r > 0$. Then for every $z \in \overline{\D(z_0,\lambda r)}$, where  $0 < \lambda < 1$, 
\[
\frac{1 - \lambda}{(1 + \lambda)^3} \leq \frac{|g'(z)|}{|g'(z_0)|} \leq \frac{1 + \lambda}{(1 - \lambda)^3}.
\]
\end{KDT}

\begin{KOT}
Let $g\colon \D(z_0,r) \to \C$ be a univalent holomorphic map, for some $z_0 \in\C$ and $r > 0$. Then $\D(g(z_0), |g'(z_0)|r/4) \subset g(\D(z_0,r))$.
\end{KOT}

\subsection{Conformal repellers and Bowen's formula}\label{subsec:bowen}
Let $g\colon V \to \C$ be a holomorphic map on an open set $V \subset \C$. An \emph{expanding conformal repeller} $\Lambda \subset V$ is a compact set, such that $g(\Lambda) \subset \Lambda$ and $|(g^k)'| > 1$ on $\Lambda$ for some $k\in\mathbb{N}$. The classical \emph{Bowen's formula} (see e.g.~\cite[Corollary~9.1.7]{PUbook}) asserts that the Hausdorff dimension of an expanding conformal repeller $\Lambda$ is equal to the unique zero of the (continuous and strictly decreasing) \emph{topological pressure function} $t \mapsto P(g|_\Lambda, t)$, $t>0$, where
\[
P(g|_\Lambda, t) = \lim_{n\to\infty} \frac{1}{n} \ln \sum_{w \in g^{-n}(z) \cap \Lambda} |(g^n)'(w)|^{-t}
\]
for $z \in \Lambda$. 

\subsection{Useful facts on entire functions}\label{subsec:entire}

We use Liouville's theorem, which states that every bounded entire function is constant, and Picard's theorem, which asserts that a transcendental entire function attains every complex value, with at most one exception, in any neighbourhood of infinity. 

Finally, let us recall a useful fact on the degree of an entire function on preimages of simply connected domains.

\begin{proposition}[{\cite[Proposition 2.8]{bergweiler-fagella-rempegillen15}}]\label{prop:swiss}
Let $f$ be a non-constant entire function, let $U \subset \C$ be a simply connected domain and let $V$ be a component of $f^{-1}(U)$. Then either $f$ maps $V$ onto $U$ as a proper map with a finite degree, or $V$ is unbounded and the set $f^{-1}(z) \cap V$ is infinite for every $z \in U$ with one possible exception. In the second case, $V$ contains an asymptotic curve corresponding to an asymptotic
value in $U$, or $V$ contains infinitely many critical points of $f$.
\end{proposition}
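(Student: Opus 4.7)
If $V$ is bounded, then $\overline V$ is compact and, since $V$ is a connected component of the open set $f^{-1}(U)$, no point of $\partial V$ can be mapped by $f$ into $U$; hence $f(\partial V)\subset\partial U$ and $f|_V\colon V\to U$ is proper. A proper nonconstant holomorphic map between plane domains has finite degree, because the valence (counted with multiplicity) is locally constant off the discrete set of critical values and bounded on compact subsets of $U$, hence constant on the connected $U$. This gives the first alternative of the proposition.

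Assume now that $V$ is unbounded. If $f|_V$ is still proper the same valence argument yields finite degree, so suppose it is not: some compact $K\subset U$ satisfies that $f^{-1}(K)\cap V$ is not compact in $V$, and since $f(\partial V)\subset\partial U$ is disjoint from $K$, any witnessing sequence in $V$ must tend to $\infty$. I would then suppose, in order to produce the second alternative in full, that $V$ contains only finitely many critical points of $f$, and attempt to build both an asymptotic curve in $V$ with asymptotic value in $U$ and the infinite-valence conclusion.

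Under that assumption the set $C\subset U$ of critical values of $f|_V$ is finite, and the restriction $f\colon V^*\to U\setminus C$, with $V^*\defeq V\cap f^{-1}(U\setminus C)$ (which is $V$ minus finitely many points and therefore still connected), is an unramified local homeomorphism. Standard path-lifting provides a dichotomy: either every path in $U\setminus C$ lifts fully to $V^*$ from every chosen preimage, so that $f\colon V^*\to U\setminus C$ is a covering map, or some path-lift escapes to infinity in $V^*$, producing after reparametrisation an asymptotic curve $\gamma\subset V$ with asymptotic value in $U$. In the covering case, the common sheet number is either finite---extending across the removed critical points to a proper branched cover $V\to U$ and contradicting non-properness---or infinite, giving infinitely many preimages in $V$ for every $z\in U\setminus C$; running the lifting analysis once more with path-endpoints approaching each $v\in C$ shows that each such $v$ either has infinite preimages in $V$ or is itself the asymptotic value of some asymptotic curve in $V$.

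The subtle step, and what I expect to be the main obstacle, is reducing the set of exceptional values in $U$ to at most one. For this I would invoke Iversen's theorem to upgrade each asymptotic curve obtained above to a logarithmic tract $T\subset V$ over a punctured neighbourhood $D\setminus\{v\}$ of its asymptotic value $v$, on which $f$ acts as a universal cover $T\to D\setminus\{v\}$, immediately giving infinite valence over $D\setminus\{v\}$; monodromy in the simply connected target $U$ then propagates the infinite valence to all of $U\setminus\{v\}$. Picard's great theorem, applied to the transcendental behaviour of $f$ near $\infty$ inside $V$, finally forces at most one such exceptional $v$. If no asymptotic curve existed in $V$ at all, the covering dichotomy would force $f|_V$ to be proper, so the only remaining way to fail properness while keeping $V$ unbounded is that $V$ contains infinitely many critical points, which is exactly the second possibility in the statement.
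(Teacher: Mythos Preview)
The paper does not contain a proof of this proposition: it is quoted verbatim as \cite[Proposition~2.8]{bergweiler-fagella-rempegillen15} in the preliminaries (Subsection~\ref{subsec:entire}) and used as a black box thereafter. There is therefore nothing in the present paper to compare your argument against.

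As to your proposal itself: the bounded case and the covering-versus-escaping-lift dichotomy are standard and correct. The part you flag as subtle---reducing the exceptional set to at most one value---is indeed where the content lies, and your sketch is not quite complete. Invoking Iversen's theorem to upgrade an asymptotic curve to a logarithmic tract over a punctured disc requires that the asymptotic value be an \emph{isolated} singularity of the appropriate inverse branch, which is not automatic from the mere existence of an asymptotic curve; and your appeal to Picard's great theorem ``inside $V$'' is imprecise, since Picard's theorem concerns the global behaviour of $f$ near $\infty$, not its restriction to an unbounded component of a preimage. The actual argument in \cite{bergweiler-fagella-rempegillen15} proceeds somewhat differently, using the Gross star theorem (or equivalently, radial continuation of the inverse function) to handle the valence and exceptional-value claims simultaneously. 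If you want a self-contained proof, that is the reference to consult.
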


Note that periodic and preperiodic Fatou components of transcendental entire maps are always simply connected (see \cite{baker75}).

\section{Summary of the proof of Theorem~A}\label{sec:summ_proofA}
As the proof of Theorem~A is rather involved, we present here its short description, introducing notation and highlighting the main ideas and steps of the proof. Note first that since the hyperbolic dimension of the boundary of a Fatou component is preserved by branches of $f^{-1}$, it is enough to show that the boundary of the immediate component $U$ has hyperbolic dimension larger than $1$. We can also assume $p=1$, so that $U$ is invariant.

The whole proof, divided into several parts, spans Sections~\ref{sec:top struct}--\ref{sec:est_der}. In Section~\ref{sec:top struct}  we describe the topological structure of a (simply connected and unbounded) attracting component $U$ with $\deg f|_U = \infty$. In Lemma~\ref{lem:D} it is shown that there exists a simply connected domain $D \subset U$, such that $E = f^{-1}(D) \cap U$ is an unbounded simply connected domain containing $\overline D$, and each component of $U \setminus \overline{E}$ is a simply connected domain, mapped by $f$ onto $U \setminus \overline{D}$ as an infinite degree covering.

Section~\ref{sec:lifts} concerns two kinds of logarithmic lifts of the map $f$. First, in Subsection~\ref{subsec:U-E} we note that due to Lemma~\ref{lem:D}, the map $f$ on $U \setminus E$ (assuming $\zeta = 0$) can be lifted by the exponential map $\exp$ to the map
\[
\Phi \colon \exp^{-1}(U \setminus E) \to \exp^{-1}(U \setminus D),
\]
such that the inverse branches of $\Phi$, denoted by $G_s$, $s \in \Z$,  form an iterated function system on $\exp^{-1}(U \setminus D)$. 
For a chosen component $\mathcal V$ of $U \setminus \overline{E}$ and $\omega = \bd \mathcal V \cap U$, we define a system of curves in $\exp^{-1}(U)$, consisting of a logarithmic lift $\Gamma$ of $\bd D$, logarithmic lifts $\Gamma_s$, $s \in \Z$, of $\omega$, and some chosen curves $\gamma_s$ connecting $\Gamma$ to $\Gamma_s$. In Lemma~\ref{lem:GammainL} we show that the union of all these curves, together with their images under successive iterates of the branches $G_s$, is a connected tree-like subset of $\exp^{-1}(U)$. 

In Subsection~\ref{subsec:near_inf} we consider another logarithmic lift $F$ of the map $f$, defined on the union of lifted tracts of $f$ over infinity, which exists due to the assumption $f \in \mathcal B$. The inverse branches of $F$ have good contracting properties. Note that due to the possible existence of the critical points of $f$ with critical values that lie outside of $U$, the inverse branches of the two lifts $\Phi$ and $F$ need not coincide on the intersection of their domains of definition. 

A general idea of the construction is to find a suitable iterated function system composed of inverse branches of some iterate of $F$, defined on a compact set $K$ (the limit set of such system can be projected by $\exp$ to create an appropriate expanding conformal repeller for $f$). However, to ensure that the limit set of this system is contained in $\exp^{-1}(\bd U)$, we can use only the branches which map points from $\exp^{-1}(U)$ to points from $\exp^{-1}(U)$. To this aim, we should study the relations between the lifts $\Phi$ and $F$, which is done in Section~\ref{sec:rel}. First, in Lemma~\ref{lem:alpha} we define some special points $v_s \in \Gamma_s \subset \exp^{-1}(U)$, located at the boundary of a simply connected domain $T$ with unbounded positive real part, where the inverse branches of $F$ are defined (see \eqref{eq:TinH}). We set 
\[
v_{s_0, \ldots, s_n} = G_{s_0} \circ \cdots \circ G_{s_{n-1}}(v_{s_n}) \in \exp^{-1}(U).
\]
In Lemma~\ref{lem:hats} and Corollary~\ref{cor:vinH} we show that $v_{s_0, \ldots, s_n} \in T$ provided $s_1 \ge \sigma$ for a sufficiently large fixed  number $\sigma \in \Z$. The most important part of this section is given by Lemma~\ref{lem:k(s)}, which asserts that for $s_0,s_1 \in \Z$ there exist an inverse branch $H_{s_0}^{(s_1)}$ of $F$ and a number $k(s_1) \in \Z$, such that
\[
H_{s_0}^{(s_1)}(v_{k(s_1), s_2,  \ldots, s_n}) = v_{s_0, \ldots, s_n}
\]
for $s_2 \ge \sigma$. These are the `good' branches, which will be used to construct a suitable iterated function system on a compact set $K \subset T$.

In Sections~\ref{sec:prelim_est}--\ref{sec:constr} we present a precise definition of the iterated function system. We define $K$ to be a rectangle in $T$ of width $2$ and sufficiently large height, such that $K \subset \mathbb H_{> b}$ for a large $b>0$, while the set $T_\sigma = H_{\sigma}^{(\sigma)}(T)$ `passes through' $K$ from the left to the right (see \eqref{eq:throughK}). To define a suitable iterated function system on $K$, an initial idea, originating from \cite{logtract}, is to use the maps 
\[
\psi_{j, l} = H_{k(\sigma)}^{(j)} \circ H_{k(j)}^{(l)} \circ H_{k(l)}^{(\sigma)}, \qquad j,l \ge \sigma,
\]
which for each $j$ create a sequence $(\psi_{j, l}(K))_{l = \sigma}^\infty$ of sets with real parts converging to $+\infty$ (see Subsection~\ref{subsec:psi}). However, since the set of pairs $(k(j), k(l))$, $j,l \ge \sigma$, can be very sparse within all integer pairs $\Z_{\ge \sigma} \times \Z_{\ge \sigma}$, the sets $\psi_{j, l}(K)$ may be located far to the right, at large distances to each other. To overcome this problem, we consider the maps 
\[
\phi_{j,l} =  H_\sigma^m\circ\psi_{j, l}, \qquad j,l \ge \sigma
\]
for a large positive integer $m$, where $H_\sigma = H_{k(\sigma)}^{(\sigma)}$. Applying a high iterate of $H_\sigma$ enables us to `move' the sets $\psi_{j, l}(K)$ back to the initial rectangle $K$. 

The precise definition of the rectangle $K$ is presented in Subsection~\ref{subsec:K}, while in Subsection~\ref{subsec:psi} we provide estimates for the maps $\psi_{j, l}$. In particular, Lemma~\ref{lem:a-u} shows that for a fixed $u \in K$, the points $\psi_{j, l}(u), v_{k(\sigma),j,l}, v_{k(\sigma),j,l+1}$ lie at a uniformly bounded distance to each other, while 
\begin{equation}\label{eq:summ-psi}
|\psi_{j, l}'(u)| \ge c |v_{k(\sigma),j,l} - v_{k(\sigma),j,l+1}| 
\end{equation}
for a constant $c>0$.

In Section~\ref{sec:constr} we prove an important Lemma~\ref{lem:seq}, which asserts that for a large $R>0$, one can find a set $\mathcal J \subset \Z_{\ge \sigma}$ of an arbitrarily large cardinality $M$, some `initial' indices $l = \ell_j\in \Z_{\ge \sigma}$ for $j \in \mathcal J$, and a sufficiently large integer $m$, such that the sequences 
\[
(H_\sigma^m (\D(\psi_{j,l}(u),R)))_{l = \ell_j}^\infty, \qquad j \in \mathcal J
\]
are well-defined and consist of $\varepsilon$-small subsets of $T_\sigma$, where the `initial' ones
\[
H_\sigma^m (\D(\psi_{j,\ell_j}(u),R))
\]
are located in $\mathbb H_{<b}$, which is to the left of the rectangle $K$. Noting that $\psi_{j,l}(K) \subset \D(\psi_{j,l}(u),R)$ for sufficiently large $R$, we can define the appropriate iterated function system on $K$ as
\[
\{\phi_{j,l}|_K\}_{(j,l) \in \mathcal K},
\]
where $\mathcal K$ is the set of pairs $(j,l)$, $j \in \mathcal J$, $l \ge \ell_j$, with $\phi_{j,l}(K) \subset K$. By definition, the attractor $\Lambda$ of the system $\{\phi_{j,l}|_K\}_{(j,l) \in \mathcal K}$ is an expanding conformal repeller for $F^{m+3}$. 

In the concluding Section~\ref{sec:est_der},
we show that the Hausdorff dimension of $\Lambda$ is larger
than~$1$, estimating the derivatives of the branches $\phi_{j,l}$ on $K$ and applying tools coming from thermodynamic formalism. First, in Lemma~\ref{lem:der-phi} we note that as by Lemma~\ref{lem:a-u} we have $v_{k(\sigma),j,l} \in \D(\psi_{j,l}(u),R))$, Lemma~\ref{lem:seq} implies that the points 
\[
w_{j,l} = H^m_\sigma(v_{k(\sigma),j,l}) = v_{\scriptsize k(\sigma), \underbrace{\sigma, \ldots, \sigma}_{m \text{ times}}, j, l} \in T_\sigma \cap \exp^{-1}(U), \qquad j \in \mathcal J, \; l \in \Z_{\ge \ell_{j}}
\]
are located $\varepsilon$-close to the sets $\phi_{j,l}(K)$ and (by \eqref{eq:summ-psi}) satisfy
\begin{equation}\label{eq:summ-phi}
|\phi_{j, l}'(u)| \ge c |w_{j,l} - w_{j,l+1}|
\end{equation}
for a constant $c>0$ (see Lemma~\ref{lem:der-phi}). Furthermore, using 
Lemma~\ref{lem:seq}, we check 
\[
\Re(w_{j,\ell_j}) < b, \qquad |w_{j,l} - w_{j,l+1}| < \varepsilon, \qquad \lim_{l \to \infty}\Re(w_{j,\ell_j}) = +\infty.
\]
Consequently, for each $j \in \mathcal J$, the sequence $(w_{j,l})_{l = \ell_{\sigma}}^\infty$ `passes through' $K$, from the left to the right, with subsequent points located $\varepsilon$-close to each other. As $K$ has width $2$, this implies  
\begin{equation}\label{eq:summ-w}
\sum_{l:(j,l) \in \mathcal K}|w_{j,l} - w_{j,l+1}| > 1
\end{equation}
for $j \in \mathcal J$ (see Lemma~\ref{lem:sum-v}). By \eqref{eq:summ-phi} and \eqref{eq:summ-w}, we obtain
\[
\sum_{(j,l) \in \mathcal K}|\phi_{j, l}'(u)| > cM .
\]
Since $M$ can be chosen to be arbitrarily large, this implies that the topological pressure $P(F^{m+3}|_\Lambda, t)$ is positive for $t=1$, which in turn yields $\dim_H \Lambda > 1$. On the other hand, as $w_{j,l} \in \exp^{-1}(U)$, the points in $\Lambda$ are approximated by points from $\exp^{-1}(U)$ (see \eqref{eq:z-v<}), which implies $\Lambda \subset \exp^{-1}(\bd U)$. Defining $X = \bigcup_{n=0}^{m+2} f^n(\exp(\Lambda))$, we show the existence of an expanding conformal repeller $X \subset \bd U$ for $f$, of Hausdorff dimension larger than~$1$, which ends the proof of Theorem~A.

\section{Topological structure of attracting components of infinite degree}\label{sec:top struct}

As indicated at the beginning of Section~\ref{sec:summ_proofA}, to prove Theorem~A it is enough to estimate the dimension of the boundary of the immediate component $U$ satisfying the conditions of the theorem. Since $f \in \mathcal B$ implies $f^p \in \mathcal B$, we can also assume 
\[
p=1,
\]
so that $U$ is invariant and $\zeta$ is an attracting fixed point. Recall that $U$ is simply connected as a periodic Fatou component of an transcendental entire map. In this section we describe the topological structure of $U$, proving the following lemma.

\begin{lemma}\label{lem:D} There exists a bounded simply connected domain $D \subset U$ with a Jordan boundary, such that $\overline{D} \subset U$, $\overline{\Sing(f|_U)} \subset D$ and $\overline{f(D)} \subset D$. Furthermore, $E = f^{-1}(D) \cap U$ is an unbounded simply connected domain containing $\overline D$.  Every component $\mathcal V$ of $U \setminus \overline{E}$ is a simply connected domain, mapped by $f$ onto $U \setminus \overline{D}$ as an infinite degree covering, such that $\bd \mathcal V \cap U$ is homeomorphic to the real line, and is mapped by $f$ onto $\bd D$ as an infinite degree covering.
\end{lemma}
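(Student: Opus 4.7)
The plan is to construct $D$ via the Riemann map of $U$ together with the Schwarz lemma, and then to analyse the topology of $E = f^{-1}(D) \cap U$ and its complement via the covering structure enabled by the inclusion $\overline{\Sing(f|_U)} \subset D$. Note first that $U$ is simply connected (as a periodic Fatou component of a transcendental entire function) and unbounded: a bounded invariant simply connected Fatou component would yield a proper self-map of finite degree, contradicting $\deg f|_U = \infty$.

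For the construction of $D$, let $\varphi \colon U \to \D$ be a Riemann map with $\varphi(\zeta) = 0$, and consider the conjugate $g := \varphi \circ f \circ \varphi^{-1}\colon \D \to \D$, which fixes $0$ with $|g'(0)| = |f'(\zeta)| < 1$. The Schwarz lemma gives $\overline{g(\overline{\D(0,\rho)})} \subsetneq \D(0,\rho)$ for every $\rho \in (0,1)$. Choosing $\rho$ close enough to $1$ that $\varphi(\overline{\Sing(f|_U)}) \subset \D(0,\rho)$ and setting $D := \varphi^{-1}(\D(0,\rho))$ produces a Jordan domain with $\overline D \subset U$, $\overline{\Sing(f|_U)} \subset D$ and $\overline{f(D)} \subset D$; the inclusion $\overline D \subset E$ is then immediate. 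Since $\overline{\Sing(f|_U)} \subset D$, the map $f$ has no singular values in $U \setminus \overline D$, hence $f \colon U \setminus \overline E \to U \setminus \overline D$ is an unbranched holomorphic covering. Because $U \setminus \overline D$ is a topological annulus with $\pi_1 \cong \Z$, every connected component $\mathcal V$ of $U \setminus \overline E$ is classified by a subgroup of $\Z$ as either a finite $n$-sheeted annular cover or the simply connected universal cover.

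The main obstacle is to rule out finite-degree annular components, to show $E$ is itself a single simply connected domain, and to establish the homeomorphism $\bd \mathcal V \cap U \cong \R$. I would invoke Proposition~\ref{prop:swiss}: a bounded preimage component of $f^{-1}(D) \cap U$ is a topological disc mapping properly with finite degree onto $D$, while an unbounded component must contain an asymptotic tract or infinitely many critical points, with asymptotic values lying in $\overline{\Sing(f|_U)} \subset D$. If bounded side components other than the one containing $\overline D$ are present in $f^{-1}(D) \cap U$, I would enlarge $D$ to a slightly larger Jordan domain, preserving forward invariance and singular-value containment, so as to absorb them; after stabilising, $E$ is connected and every component $\mathcal V$ of $U \setminus \overline E$ is the universal cover of the annulus $U \setminus \overline D$, hence a topological disc with $\bd \mathcal V \cap U$ the $\Z$-fold lift of $\bd D$, homeomorphic to $\R$ and mapping onto $\bd D$ as an infinite-degree covering. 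Finally, $E$ is unbounded because at least one $\mathcal V$ is unbounded (by Proposition~\ref{prop:swiss} applied to $f|_U$ with $\deg f|_U = \infty$, forcing an asymptotic tract in $E$), and it is simply connected because the fingers $\mathcal V$ attach to $E$ along the open arcs $\bd \mathcal V \cap U$ rather than along closed loops, so $E$ inherits simple connectedness from $U$.
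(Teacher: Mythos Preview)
Your construction of $D$ via the Riemann map and Schwarz lemma is essentially the same as the paper's (which uses a hyperbolic disc, i.e.\ the same object), and your identification of $f\colon U\setminus\overline E\to U\setminus\overline D$ as an unbranched covering of an annulus is correct and matches the paper. The gap is precisely at the step you flag as the main obstacle: you do not actually rule out finite-degree annular components.

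Your proposed remedy, to ``enlarge $D$ \ldots\ so as to absorb'' bounded side components of $f^{-1}(D)\cap U$ and then wait for the process to ``stabilise'', is not an argument. There is no reason a small enlargement of $D$ should swallow such a component rather than merely enlarge it, and no mechanism is given for termination. Invoking Proposition~\ref{prop:swiss} tells you that a bounded component of $f^{-1}(D)\cap U$ maps properly onto $D$ with finite degree, but this by itself does not yield a contradiction: nothing so far prevents $E$ from having several components, with one or more mapped finitely onto $D$.

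The paper closes this gap with a short maximum-principle argument that you are missing. Suppose some component $\mathcal V$ of $f^{-1}(U\setminus\overline D)\cap U$ is a finite annular cover. Then $\partial\mathcal V\cap U$ is a single Jordan curve mapped $d$-to-$1$ onto $\partial D$; let $\mathcal C$ be the Jordan domain it bounds. By the maximum principle $f(\overline{\mathcal C})=\overline D$, and since $U$ is simply connected one has $\overline{\mathcal C}\cup\mathcal V=U$. Hence $\mathcal V$ is the \emph{entire} preimage $f^{-1}(U\setminus\overline D)\cap U$, so $\deg f|_U=d<\infty$, contradicting the hypothesis. Thus every component $\mathcal V$ is the universal cover, and in particular $\partial\mathcal V\cap U$ is a line with both ends going to $\infty$. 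Connectedness and simple connectivity of $E$ then follow directly: in the Riemann-map picture the sets $\varphi(\mathcal V)$ are disjoint crosscut neighbourhoods in $\D$, all on the side away from $0$, and removing such a family from $\D$ leaves a simply connected set. Unboundedness of $E$ follows because the boundary lines $\partial\mathcal V\cap U\subset\partial E$ tend to infinity; your argument via ``an asymptotic tract in $E$'' conflates $\mathcal V$ and $E$ and does not work as stated.
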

\begin{proof} Since by assumption, $\overline{\Sing(f|_U)}$ is a compact subset of $U$, we can choose a sufficiently large $r>0$, such that for
\[
D = \{z \in U: \rho_U(z, \zeta) < r\}
\]
we have $\overline{\Sing(f|_U)} \subset D$, where $\rho_U$ is the hyperbolic metric on $U$. Obviously, $\overline{D} \subset U$. As $U$ is simply connected, $\overline{D}$ is a homeomorphic image of a closed disc under a Riemann map, so $D$ is a simply connected domain and $\bd D$ is a Jordan curve.  Moreover, $\overline{f(D)} \subset D$ by the Schwarz--Pick lemma. See Figure~\ref{fig:D}.

\begin{figure}[ht!]
\centerline{\includegraphics[width=0.99\textwidth]{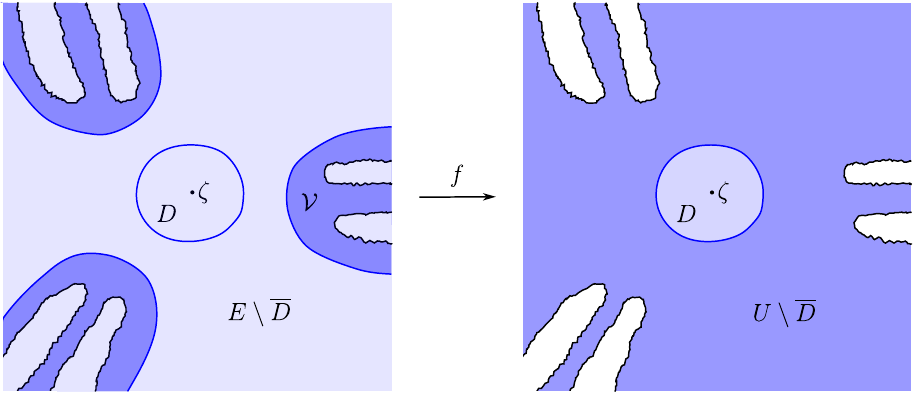}}
\caption{Topological structure of the component $U$.}\label{fig:D}
\end{figure}

As $U$ is simply connected and $\overline{\Sing(f|_U)} \subset D \subset \overline{D} \subset U$, the set $U \setminus \overline{D}$ is homeomorphic to an annulus, and $U \setminus D$ is disjoint from $\overline{\Sing(f|_U)}$. Hence, for each component $\mathcal V$ of $f^{-1}(U \setminus \overline{D}) \cap U$, the two following cases are possible: either $\mathcal V$ is homeomorphic to an annulus, and is mapped under $f$ onto $U \setminus \overline{D}$ as a covering of degree $d$ for some $d \in \N$, such that $\bd\mathcal V \cap U$ is a Jordan curve mapped onto $\bd D$ as a degree $d$-covering, or $\mathcal V$ is an unbounded simply connected domain, which is mapped by $f$ onto $U \setminus \overline{D}$ as an infinite degree covering, such that $\bd\mathcal V \cap U$ is a curve homeomorphic to the real line, with two `ends' tending to infinity, mapped onto $\bd D$ as an infinite degree covering. Suppose the first case holds. Then, denoting by $\mathcal C$ the bounded component of the complement of the Jordan curve $\bd\mathcal V \cap U$, we have $\overline{\mathcal C} \cup \mathcal V = U$ and $f(\overline{\mathcal C}) =\overline{D}$ by the maximum principle, so $\mathcal V = f^{-1}(U \setminus \overline{D}) \cap U$, which contradicts the assumption $\deg f|_U = \infty$. Therefore, for every component $\mathcal V$ of $f^{-1}(U \setminus \overline{D}) \cap U$ the second case holds. This implies that the set $E = f^{-1}(D) \cap U$ is an unbounded simply connected domain, while the components of $U \setminus \overline{E}$ coincide with the components of $f^{-1}(U \setminus \overline{D}) \cap U$. As $\overline{f(D)} \subset D$, we have $\overline{D} \subset E$. See Figure~\ref{fig:D}. This ends the proof of the lemma. 
\end{proof}

\section{Two logarithmic lifts of $f$}\label{sec:lifts}
In the following two subsections we describe, respectively, two logarithmic lifts of the map $f$, defined on some subsets of $U$. For simplicity, we may assume 
\[
\zeta = 0
\]
after a coordinate change by a translation. 

\subsection{\boldmath Logarithmic lift of $f$ on $U \setminus E$}\label{subsec:U-E}
Consider the sets $D$ and $E$ from Lemma~\ref{lem:D}. Recall that Lemma~\ref{lem:D} shows that every component of $U \setminus E$ is a simply connected set, mapped by $f$ onto $U \setminus D$ as an infinite degree covering. Hence, the map $f$ on $U \setminus E$ can be lifted by the exponential map $\exp$ to a map 
\[
\Phi \colon \exp^{-1}(U \setminus E) \to \exp^{-1}(U \setminus D),
\]
such that 
\[
\exp \circ \Phi = f \circ \exp
\]
and
\[
\Phi(z + 2\pi i) = \Phi(z) \qquad \text{for } \; z \in \exp^{-1}(U \setminus E).
\]
Let 
\[
V = \exp^{-1} (U \setminus \overline{D}), \qquad \Gamma = \exp^{-1} (\bd D).
\]
Then $V$ is a simply connected domain with unbounded positive real part, $\Gamma$ is a component of $\bd V$, homeomorphic to the real line, and 
\[
V = V + 2 \pi i s, \qquad \Gamma = \Gamma + 2 \pi i s, \qquad s \in \Z.
\]

Choose a component $\mathcal V$ of $U \setminus \overline{E}$ and let 
\[
\omega = \bd\mathcal V \cap U.
\]
By Lemma~\ref{lem:D}, the set $\omega$ is a component of $\bd E$, which is an unbounded curve homeomorphic to the real line, mapped by $f$ onto $\bd D$ as an infinite degree covering. Moreover,
\[
\exp^{-1}(\mathcal V) = \bigcup_{s \in \Z} V_s \subset V,
\]
where $V_s$, $s \in \Z$, are pairwise disjoint simply connected domains with unbounded positive real part, such that $V_s = V_0 + 2\pi i s$. Furthermore, 
\[
\Gamma_s = \exp^{-1}(\omega) \cap \bd V_s = \bd V_s \cap V, \qquad s \in \Z,
\]
is a curve contained in $V$, homeomorphic to the real line, and its two `ends' have real part tending to infinity. Note that $\Phi$ maps univalently $V_s$ onto $V$, while $\Gamma_s$ is mapped homeomorphically onto $\Gamma$.
Let
\[
G_s = (\Phi|_{V_s \cup \Gamma_s})^{-1} \colon V \cup \Gamma \to V_s \cup \Gamma_s, \qquad s \in \Z. 
\]
Then $G_s$ is a homeomorphism, univalent on $V$, and
\[
G_s = G_0 + 2\pi is.
\]

For $s_0, \ldots, s_n \in \Z$, $n \ge 1$, define inductively
\[
V_{s_0, \ldots, s_n} = G_{s_0} \circ \cdots \circ G_{s_n}(V), \qquad \Gamma_{s_0, \ldots, s_n} = G_{s_0} \circ \cdots \circ G_{s_n}(\Gamma).
\]
Then $V_{s_0, \ldots, s_n}$ is a simply connected domain, $\Gamma_{s_0, \ldots, s_n} = \bd V_{s_0, \ldots, s_n} \cap V$ and
\begin{equation}\label{eq:V} V_s \cup \Gamma_s \subset V, \qquad
V_{s_0, \ldots, s_n} \cup \Gamma_{s_0, \ldots, s_n} \subset V_{s_0, \ldots, s_{n-1}} \subset V \qquad \text{for }\;s, s_0, \ldots, s_n \in \Z, \; n \ge 1.
\end{equation}
Parameterize $\Gamma$ by $t \in \R$, such that 
\[
\Gamma(t + s) = \Gamma(t) + 2 \pi i s \qquad \text{for } \; t \in \R, \; s \in \Z
\]
and transfer this parameterization to $\Gamma_{s_0, \ldots, s_n}$ by $G_{s_0} \circ \cdots \circ G_{s_n}$. 
Let 
\[
a_s = \Gamma(s), \qquad a_{s_0, \ldots, s_n} = G_{s_0} \circ \cdots \circ G_{s_{n-1}}(a_{s_n}), \qquad s, s_0, \ldots, s_n \in \Z, \;n \ge 1
\]
and define
\[
\gamma_s \colon [0,1] \to \C, \qquad s \in \Z, 
\]
to be pairwise disjoint curves homeomorphic to $[0,1]$, such that $\gamma_s(0) = a_s$, $\gamma_s(1) = a_{s,0}$, 
\[
\gamma_s((0,1)) \subset V \setminus \bigcup_{s' \in \Z} \Gamma_{s'}
\]
and $\gamma_s = \gamma_0 + 2\pi i s$. 
Let
\[
\gamma_{s_0, \ldots, s_n} = G_{s_0} \circ \cdots \circ G_{s_{n-1}}\circ \gamma_{s_n}, \qquad s_0, \ldots, s_n \in \Z, \; n\ge 1.
\]
See Figure~\ref{fig:G}.

\begin{figure}[ht!]
\centerline{\includegraphics[width=0.99\textwidth]{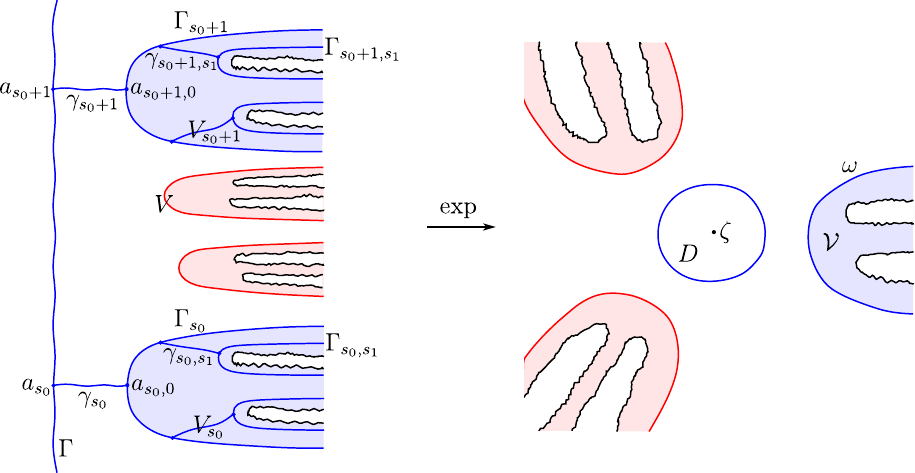}}
\caption{Logarithmic lift on $U \setminus E$.}\label{fig:G}
\end{figure}

The following lemma is a direct consequence of the properties of the curves $\gamma_{s_0, \ldots, s_n}$, $\Gamma_{s_0, \ldots, s_k}$, listed above.

\begin{lemma}\label{lem:GammainL} 
For every $s_0 \in \Z$, the set
\[
(\gamma_{s_0} \setminus \{a_{s_0}\}) \cup \Gamma_{s_0} \cup 
\bigcup_{k=1}^\infty\;\bigcup_{s_1, \ldots, s_k \in \Z} \gamma_{s_0, \ldots, s_k} \cup \Gamma_{s_0, \ldots, s_k}
\]
is a connected subset of $V$. 

For every $s_0, \ldots, s_n \in \Z$, $n \ge 1$, the set
\[
(\gamma_{s_0, \ldots, s_n}  \setminus \{a_{s_0, \ldots, s_n}\})\cup \Gamma_{s_0, \ldots, s_n} \cup \bigcup_{k=1}^\infty\;\bigcup_{s_{n+1}, \ldots, s_{n+k} \in \Z} \gamma_{s_0, \ldots, s_{n+k}} \cup \Gamma_{s_0, \ldots, s_{n+k}}
\]
is a connected subset of $V_{s_0,\ldots, s_{n-1}}$.
\end{lemma}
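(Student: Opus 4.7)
The plan is to exploit the tree-like structure of the described union: the curves $\Gamma_{s_0,\ldots,s_k}$ play the role of nodes, the curves $\gamma_{s_0,\ldots,s_k}$ play the role of edges, and each edge joins a node at level $k{-}1$ to a node at level $k$. The key observation, obtained by unwinding definitions, is that for every $n \ge 1$,
\[
a_{s_0,\ldots,s_n} \;=\; G_{s_0}\circ\cdots\circ G_{s_{n-1}}(\Gamma(s_n))\;\in\; G_{s_0}\circ\cdots\circ G_{s_{n-1}}(\Gamma) \;=\; \Gamma_{s_0,\ldots,s_{n-1}},
\]
so that the curve $\gamma_{s_0,\ldots,s_n}$, whose endpoints are $a_{s_0,\ldots,s_n}$ and $a_{s_0,\ldots,s_n,0}$, starts on $\Gamma_{s_0,\ldots,s_{n-1}}$ and ends on $\Gamma_{s_0,\ldots,s_n}$.

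For the first statement, I would first check containment in $V$. The sets $\Gamma_{s_0,\ldots,s_k}$ lie in $V$ by \eqref{eq:V}. For $\gamma_{s_0,\ldots,s_k}$ with $k \ge 1$, since $\gamma_{s_0,\ldots,s_k} = G_{s_0}\circ\cdots\circ G_{s_{k-1}}(\gamma_{s_k})$ and the single point on $\gamma_{s_k}$ lying in $\bd V$ is $a_{s_k}$, it suffices to observe that $G_{s_0}\circ\cdots\circ G_{s_{k-1}}$ extends continuously to $V \cup \Gamma$ with image in $V_{s_0,\ldots,s_{k-1}} \cup \Gamma_{s_0,\ldots,s_{k-1}}$, so $a_{s_k}$ maps into $\Gamma_{s_0,\ldots,s_{k-1}} \subset V$. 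For $\gamma_{s_0}$ the single troublesome endpoint $a_{s_0}$ is explicitly removed. Connectedness is then established inductively: $\Gamma_{s_0}$ is connected (homeomorphic to $\R$), the set $\gamma_{s_0}\setminus\{a_{s_0}\}$ is connected and meets $\Gamma_{s_0}$ at $a_{s_0,0}$, and by the key observation, each $\gamma_{s_0,\ldots,s_k}$ is connected and joins the previously built set along $\Gamma_{s_0,\ldots,s_{k-1}}$, after which $\Gamma_{s_0,\ldots,s_k}$ attaches through the endpoint $a_{s_0,\ldots,s_k,0}$. Since arbitrary unions of connected sets sharing a common point with a fixed connected ``trunk'' are connected, the whole union is connected.

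For the second statement, I would reduce to the first by applying the continuous map $G_{s_0}\circ\cdots\circ G_{s_{n-1}}$ to the first-statement union, reindexing so that $s_n$ plays the role of $s_0$ and $s_{n+j}$ plays the role of $s_j$. Using the defining identities
\[
G_{s_0}\circ\cdots\circ G_{s_{n-1}}(\Gamma_{s_n,\ldots,s_{n+k}}) = \Gamma_{s_0,\ldots,s_{n+k}}, \qquad G_{s_0}\circ\cdots\circ G_{s_{n-1}}(\gamma_{s_n,\ldots,s_{n+k}}) = \gamma_{s_0,\ldots,s_{n+k}},
\]
the image is exactly the second union; it is connected as the continuous image of a connected set, and it lies in $G_{s_0}\circ\cdots\circ G_{s_{n-1}}(V) = V_{s_0,\ldots,s_{n-1}}$.

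There is no substantial obstacle here: the only potential source of friction is bookkeeping around the fact that $\Gamma \subset \bd V$, which is why $\{a_{s_0}\}$ must be removed at the root but no endpoints need to be removed at deeper levels (the $G_{s_i}$ transport the boundary point $a_{s_k}$ into the interior $V$ once at least one branch has been applied). Beyond this, the argument is purely combinatorial and follows from the definitions and the Schwarz-Pick--type construction of Section~\ref{sec:lifts}.
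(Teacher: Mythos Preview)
Your proposal is correct and is essentially what the paper intends: the paper states only that the lemma ``is a direct consequence of the properties of the curves $\gamma_{s_0,\ldots,s_n}$, $\Gamma_{s_0,\ldots,s_k}$, listed above,'' and your argument simply spells out those consequences---the endpoint relation $a_{s_0,\ldots,s_n}\in\Gamma_{s_0,\ldots,s_{n-1}}$, the tree attachment, the use of \eqref{eq:V} for containment, and the reduction of the second part to the first via $G_{s_0}\circ\cdots\circ G_{s_{n-1}}$.
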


\subsection{\boldmath Logarithmic lift of $f$ near $\infty$}\label{subsec:near_inf}
Recall that for $x \in \R$ we write
\[
\mathbb H_{>x} = \{z\in\C: \Re(z) > x\},
\]
while $\mathbb H_{<x}$, $\mathbb H_{\ge x}$, $\mathbb H_{\le x}$ are defined analogously. 

Since $f \in \mathcal B$, we can fix a sufficiently large number $a>0$, such that $\overline{\Sing(f)}\subset \D(0, e^a)$ and consequently, every component $\mathcal W$ of $f^{-1}(\C \setminus \overline{\D(0, e^a)})$, called a \emph{tract of $f$ over infinity}, is a simply connected unbounded domain with the boundary homeomorphic to the real line with two `ends' tending to infinity, such that $f$ maps $\mathcal W$ onto $\C \setminus \overline{\D(0, e^a)}$ as an infinite degree covering, with a continuous extension to $\overline{\mathcal W}$. Hence, the map $f$ on $f^{-1}(\C \setminus \D(0, e^a))$ can be lifted by $\exp$ to a map 
\[
F\colon \exp^{-1}(f^{-1}(\C \setminus \D(0, e^a))) \to \mathbb H_{\ge a},
\]
such that
\[
\exp\circ F = f\circ \exp
\]
and
\[
F(z) = F(z +2\pi i)\qquad \text{for } \; z \in \exp^{-1}(f^{-1}(\C \setminus \D(0, e^a))).
\]
Moreover, for every tract $\mathcal W$ of $f$ over infinity, we have
\[
\exp^{-1}(\mathcal W) = \bigcup_{s \in \Z} W_s,
\]
where $W_s$, $s \in \Z$, called \emph{lifted tracts of $f$ over infinity}, are pairwise disjoint simply connected unbounded domains,
\[
W_s = W_0 + 2\pi i s,
\]
and $\bd W_s$ is homeomorphic to the real line, such that its two `ends' have real part tending to infinity. The map $F$ provides a homeomorphism from $\overline{W_s}$ onto $\mathbb H_{\ge a}$, univalent on $W_s$. For details, see e.g.~\cite{eremenko-lyubich92}.

\section{Relations between two logarithmic lifts}\label{sec:rel}

In this section we describe relations between the two logarithmic lifts $\Phi$ and $F$ of $f$, defined, respectively, in Subsections~\ref{subsec:U-E} and~\ref{subsec:near_inf}.
Note first that by the definition of $\Phi$ and $F$,
\begin{equation}\label{eq:Phi-F}
\exp\circ \Phi(z) = \exp \circ F(z) \qquad \text{for }\; z \in  F^{-1}(\mathbb H_{\ge a}) \cap \bigcup_{s\in \Z} (V_s \cup \Gamma_s).
\end{equation}
Furthermore, increasing $a$ if necessary, we can assume
\begin{equation}\label{eq:Gamma<a}
\Gamma \subset \mathbb H_{<a}.
\end{equation}

We start by introducing some basic objects, used in the further considerations.

\begin{lemma}[{\bf\boldmath Existence of the curve $\alpha$ and points $v_s$}{}]\label{lem:alpha} There exists a curve $\alpha\colon \R \to \C$ homeomorphic to the real line, such that
\begin{enumerate}[$($a$)$]
\item $\alpha(t + s) = \alpha(t) + 2 \pi is$ for $t \in \R$, $s \in \Z$,
\item $\alpha \subset \mathbb H_{> 2a}$,
\item for every $s \in \Z$, the curve $\alpha$ intersects $\Gamma_s([0, +\infty))$ at a unique point $v_{s}$.
\end{enumerate}
\end{lemma}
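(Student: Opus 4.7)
The plan is to construct $\alpha$ as the lift, under the projection $\pi\colon \C \to C := \C/(2\pi i \Z)$, of a suitable meridian of the quotient cylinder, and to identify $v_s$ with the intersections of $\alpha$ with the curves $\Gamma_s([0,+\infty))$. Since $\lim_{t\to+\infty}\Re(\Gamma_0(t)) = +\infty$ by the parameterization set up in Subsection~\ref{subsec:U-E}, I would fix a point $v_0 = \Gamma_0(t_0) \in \mathbb H_{>2a}$ with some $t_0 \ge 0$; the identity $\Gamma_s = \Gamma_0 + 2\pi i s$ then places $v_s := v_0 + 2\pi i s$ in $\Gamma_s([0,+\infty)) \cap \mathbb H_{>2a}$ for every $s \in \Z$. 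Because the $V_s$ are pairwise disjoint, so are the curves $\Gamma_s$; hence $\pi|_{\Gamma_0}$ is injective, and $\bar\gamma_+ := \pi(\Gamma_0([0,+\infty)))$ is a properly embedded arc in $C$ running from $\pi(\Gamma_0(0))$ out to the right-infinity end of $C$, with $\bar v_0 := \pi(v_0) \in \bar\gamma_+ \cap \pi(\mathbb H_{>2a})$.

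The main step is to produce a simple closed curve $\bar\alpha \subset \pi(\mathbb H_{>2a})$ through $\bar v_0$, representing a generator of $\pi_1(\pi(\mathbb H_{>2a})) = \Z$, meeting $\bar\gamma_+$ transversely only at $\bar v_0$. I would start with the meridian $\{\Re = R\}$ for some $R > \max(2a, \Re(\Gamma_0(0)))$; the intersection $\{\Re = R\} \cap \bar\gamma_+$ is finite because the level set $\{t \ge 0 : \Re(\Gamma_0(t)) = R\}$ is discrete---by real-analyticity of $\Gamma_0$, inherited from $\bd D = \{\rho_U(\cdot,\zeta) = r\}$ being the level set of the real-analytic hyperbolic distance at a regular value---and bounded by the divergence $\Re\circ\Gamma_0 \to +\infty$. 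Labelling these intersections along $\bar\gamma_+$ alternately ``in'' and ``out'' according to whether $\bar\gamma_+$ enters or exits $\pi(\mathbb H_{>R})$ there, and noting that $\bar\gamma_+$ starts in $\pi(\mathbb H_{<R})$ and escapes to right infinity, the sequence reads in, out, in, out, $\ldots$, in, and has odd total length. Each in--out pair, together with the corresponding sub-arc of the meridian, cobounds an embedded bigon contained in $\pi(\mathbb H_{\ge R}) \subset \pi(\mathbb H_{>2a})$; isotoping the meridian across innermost such bigons one at a time cancels intersections in pairs while staying inside $\pi(\mathbb H_{\ge R})$, and after finitely many moves a single intersection remains, which a small further isotopy repositions at $\bar v_0$.

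Lifting $\bar\alpha$ through $\pi$ starting at $v_0$ then yields a curve $\alpha \colon \R \to \mathbb H_{>2a}$, homeomorphic to $\R$, with $\alpha(t+1) = \alpha(t) + 2\pi i$: reparameterizing in $t$ gives (a), (b) is built into the construction, and the preimage $\alpha \cap \bigcup_{s\in\Z}\Gamma_s([0,+\infty)) = \alpha \cap \pi^{-1}(\bar\gamma_+) = \pi^{-1}(\{\bar v_0\}) = \{v_s : s \in \Z\}$, combined with the disjointness of the $\Gamma_s$, forces $\alpha \cap \Gamma_s([0,+\infty)) = \{v_s\}$ for every $s$, giving (c). The main obstacle is the bigon-cancellation step: its execution requires (i) finiteness of $\{\Re=R\} \cap \bar\gamma_+$, handled via the real-analyticity argument above, and (ii) containment of the bigons inside $\pi(\mathbb H_{>2a})$, which is arranged by cancelling only ``in--out'' excursion pairs, whose bigons automatically sit in $\pi(\mathbb H_{\ge R})$.
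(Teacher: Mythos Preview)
Your approach is essentially the paper's: pass to the quotient (you use the cylinder $\C/2\pi i\Z$, the paper uses $\C^* \cong \C/2\pi i\Z$ via $\exp$), produce a Jordan curve in the region $\{|z|>e^{2a}\}$ (equivalently $\pi(\mathbb H_{>2a})$) meeting the projected half-curve $\exp(\Gamma_s([0,+\infty)))$ exactly once, and lift. The paper's proof is a single sentence asserting that such a Jordan curve exists, while you supply a bigon-cancellation construction; your argument is correct, though the final step of repositioning the unique intersection at a pre-selected $\bar v_0$ is unnecessary, since the lemma only asks for \emph{some} intersection point $v_s$, not a prescribed one.
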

\begin{proof} Since $\exp(\Gamma_s([0, +\infty))) \subset \omega$ is a curve homeomorphic to $[0,+\infty)$, which tends to infinity, we can construct a Jordan curve $\mathcal A \subset \C \setminus \overline{\D(0, e^{2a})}$, intersecting $\exp(\Gamma_s([0, +\infty)))$ at a unique point. Lifting $\mathcal A$ by $\exp$ provides a suitable curve $\alpha$. 
\end{proof}

Consider the curve $\alpha$ and points $v_s$, $s \in \Z$, from Lemma~\ref{lem:alpha}. Note that $v_s \in V$ and
\[
v_s = v_0 + 2\pi i s.
\]
Define
\[
v_{s_0, \ldots, s_n} = G_{s_0} \circ \cdots \circ G_{s_{n-1}}(v_{s_n})
\]
for $s_0, \ldots, s_n \in \Z$, $n \ge 1$. By Lemma~\ref{lem:alpha} and \eqref{eq:V},
\begin{equation}\label{eq:vinU}
v_s \in \Gamma_s \subset V, \qquad v_{s_0, \ldots, s_n} \in \Gamma_{s_0, \ldots, s_n} \subset V_{s_0, \ldots, s_{n-1}} \subset V.
\end{equation}
See Figure~\ref{fig:T}.

\begin{figure}[ht!]
\centerline{\includegraphics[width=0.6\textwidth]{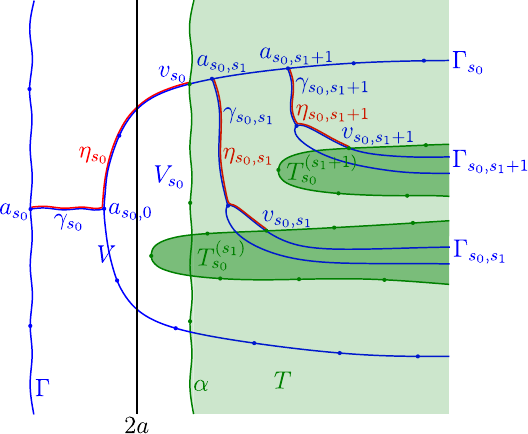}}
\caption{The sets $T_{s_0}^{(s_1)}$ and curves $\eta_{s_0, s_1}$.}
\label{fig:T}
\end{figure}

A crucial property is that for given $s_0 \in \Z$, subsequent points of the two-sided sequence $(v_{s_0, s_1})_{s_1 \in \Z}$ lie at a  uniformly bounded distance from each other, as described in the following lemma.

\begin{lemma}\label{lem:diameta} There exist constants $c_1,c_2>0$ such that for every $s_0, s_1 \in \Z$, 
\[
|v_{s_0, s_1} - v_{s_0, s_1+1}| < c_1 |G_{s_0}'(v_{s_1})| < c_2.
\]
\end{lemma}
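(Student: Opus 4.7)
The plan is to exploit the $2\pi i$-periodicity of $V$ together with the fact that $G_{s_0}\colon V \to V_{s_0}$ is a conformal isomorphism. First I would observe that $z \mapsto z + 2\pi i$ is a deck transformation of the covering $\exp\colon V \to U \setminus \overline{D}$ (since $V = \exp^{-1}(U \setminus \overline D)$ is $2\pi i$-periodic), hence a hyperbolic isometry of $V$. Setting $\delta_0 \defeq \dist(v_0, \bd V) > 0$ (positive since $v_0 \in \Gamma_0 \subset V$) and $d_0 \defeq d_{\rho_V}(v_0, v_1) < \infty$, periodicity yields $\dist(v_{s_1}, \bd V) = \delta_0$ and $d_{\rho_V}(v_{s_1}, v_{s_1+1}) = d_0$ for every $s_1 \in \Z$.

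Next, since $\mathcal V$ is simply connected and avoids $\zeta = 0 \in D$, each component $V_{s_0}$ of $\exp^{-1}(\mathcal V)$ is simply connected, so $\Phi|_{V_{s_0}}\colon V_{s_0} \to V$ is a conformal isomorphism and its inverse $G_{s_0}$ is a hyperbolic isometry; in particular
\[
d_{\rho_{V_{s_0}}}(v_{s_0, s_1}, v_{s_0, s_1+1}) = d_0.
\]
The standard comparison between hyperbolic and Euclidean distances in a simply connected domain (via the Riemann map together with Koebe distortion) yields a constant $K = K(d_0)$ with
\[
|v_{s_0, s_1} - v_{s_0, s_1+1}| \le K \cdot \dist(v_{s_0, s_1}, \bd V_{s_0}).
\]
Applying the Koebe one-quarter theorem to $\Phi$ on $\D(v_{s_0, s_1}, \dist(v_{s_0, s_1}, \bd V_{s_0}))$ gives $\dist(v_{s_0, s_1}, \bd V_{s_0}) \le 4 \delta_0 |G_{s_0}'(v_{s_1})|$, and combining the two inequalities produces the first estimate with, e.g., $c_1 = 4 K \delta_0 + 1$.

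For the second inequality I would use the disjointness of $V_{s_0}$ and $V_{s_0 + 1} = V_{s_0} + 2\pi i$: any $z \in V_{s_0}$ satisfies $z + 2\pi i \notin V_{s_0}$, so the vertical segment from $z$ to $z + 2\pi i$ must meet $\bd V_{s_0}$, giving the universal bound $\dist(v_{s_0, s_1}, \bd V_{s_0}) \le 2\pi$. Applying the Koebe one-quarter theorem now to $G_{s_0}$ on $\D(v_{s_1}, \delta_0) \subset V$ yields $|G_{s_0}'(v_{s_1})| \le 4 \dist(v_{s_0, s_1}, \bd V_{s_0})/\delta_0 \le 8\pi/\delta_0$, so a suitable $c_2$ can be chosen just above $c_1 \cdot 8\pi/\delta_0$. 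The main point requiring care will be checking the simple connectedness of $V_{s_0}$ and the fact that $z \mapsto z + 2\pi i$ is indeed a hyperbolic isometry of $V$ (not just a Euclidean translation), so that the Koebe--Schwarz--Pick machinery applies cleanly; once these two structural facts are in place, everything reduces to the elementary disjointness observation for the lifted pre-tracts, which caps both $\dist(v_{s_0,s_1},\bd V_{s_0})$ and $|G_{s_0}'(v_{s_1})|$ uniformly.
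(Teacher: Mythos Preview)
Your argument is correct and takes a genuinely different route from the paper's. The paper connects $v_{s_1}$ to $v_{s_1+1}$ by an explicit curve $L_{s_1} = \eta_{s_1} \cup [a_{s_1}, a_{s_1+1}] \cup \eta_{s_1+1}$ that passes through $\Gamma \subset \bd V$; to apply Koebe distortion along this curve it must first \emph{extend} $G_{s_0}$ univalently to a $\delta$-neighbourhood of $\Gamma$, invoking the hypothesis $\overline{\Sing(f|_U)} \subset D$. You instead stay entirely inside $V$: the $2\pi i$-periodicity of $V$ makes translation a conformal automorphism and hence a hyperbolic isometry, so $d_{\rho_V}(v_{s_1}, v_{s_1+1})$ is a constant $d_0$; then the conformal isomorphism $G_{s_0}\colon (V, \rho_V) \to (V_{s_0}, \rho_{V_{s_0}})$ together with the standard Koebe comparison of hyperbolic and Euclidean distances yields the first inequality without any extension or auxiliary curves. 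For the second inequality the two proofs coincide (Koebe one-quarter plus the $2\pi$-bound coming from disjointness of the $V_s$). Your approach is somewhat cleaner here --- it needs none of the scaffolding $\gamma_s$, $\eta_s$, nor the extension across $\Gamma$ --- while the paper's approach has the minor advantage of re-using objects that are already set up for the combinatorics in later sections. The two structural facts you flag for care (simple connectivity of $V_{s_0}$ and that $z\mapsto z+2\pi i$ is a hyperbolic isometry of $V$) are both immediate from the setup: the paper records that $V$ and each $V_s$ are simply connected domains, and $V+2\pi i = V$ since $V=\exp^{-1}(U\setminus\overline D)$.
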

\begin{proof}Let $t_0 > 0$ be such that
\[
\Gamma_s(t_0) = v_s, \qquad s \in \Z.
\]
Define
\[
\eta_{s_1} = \gamma_{s_1} \cup \Gamma_{s_1}|_{[0,t_0]}, \qquad \eta_{s_0, s_1} =  G_{s_0}(\eta_{s_1})
\]
for $s_0, s_1\in \Z$. Note that $\eta_{s_1}$ connects $a_{s_1}$ to $v_{s_1}$, while $\eta_{s_0, s_1}$ connects $a_{s_0, s_1}$ to $v_{s_0, s_1}$. By Lemma~\ref{lem:GammainL},
\[
\eta_{s_1} \setminus \{a_{s_1}\} \subset V, \qquad
\eta_{s_0, s_1} \setminus \{a_{s_0, s_1}\} \subset V_{s_0}.
\]
See Figure~\ref{fig:T}. Let 
\[
L_{s_1} = \eta_{s_1} \cup \Gamma([s_1,s_1+1]) \cup \eta_{s_1+1}, \qquad s_1 \in \Z,
\]
and note that $L_{s_1}$ is a curve connecting $v_{s_1}$ to $v_{s_1+1}$, such that 
\[
L_{s_1}\subset V \cup \Gamma, \qquad  L_{s_1} = L_0 + 2\pi i s_1.
\]
Since $\overline{\Sing(f)\cap U} \subset D$, there exists $\delta>0$ such that the maps $G_{s_0}$, $s_0 \in \Z$, can be extended univalently to $\{z \in \C: \dist(z, \Gamma) < \delta\}$. Hence, by Koebe's distortion theorem, the distortion of $G_{s_0}$ on $L_{s_1}$ is bounded by a constant independent of $s_0, s_1$. This provides the left-hand side inequality in the assertion of the lemma. Moreover, by \eqref{eq:Gamma<a} and Lemma~\ref{lem:alpha}, we have $\D(v_{s_1},a) \subset V$, so by Koebe's one-quarter theorem,
\[
\D\Big(v_{s_0,s_1}, |G_{s_0}'(v_{s_1})|\frac{a}{4}\Big) \subset G_{s_0}(\D(v_{s_1},a)) \subset V_{s_0}.
\]
However, $V_{s_0}$ does not contain vertical segments of length $2\pi$, as it is a logarithmic lift of $\mathcal V$. Hence, $|G_{s_0}'(v_{s_1})|a/4 \le 2\pi$, which provides the right-hand side inequality. 
\end{proof}

Let $T$ be the component of $\C \setminus \alpha$ with unbounded positive real part. By definition, $T$ is a simply connected unbounded domain such that $\bd T = \alpha$ is homeomorphic to the real line and
\begin{equation}\label{eq:TinH}
\overline{T} \subset \mathbb H_{>2a} \qquad \text{and} \qquad v_s \in \bd T, \quad s \in \Z.
\end{equation}
Hence, every component of $F^{-1}(\overline{T})$ is a simply connected set with the boundary homeomorphic to the real line, mapped by $F$ homeomorphically onto $\overline{T}$. 

We are going to compare the behaviour of inverse branches of the lifts $\Phi$ and $F$ near the points $v_{s_0, \ldots, s_n}$. First, we show that if $s_1$ is sufficiently large, then these points are contained in $T$. Recall that we write
\[
\Z_{\ge \sigma} = \{s \in \Z: s \ge \sigma\}
\]
for $\sigma \in \Z$.

\begin{lemma}[{\bf\boldmath Existence of the number $\sigma$}{}]\label{lem:hats} There exists $\sigma \in \Z$ such that 
\[
\gamma_{s_0, s_1} \cup \Gamma_{s_0, s_1}\cup V_{s_0, s_1} \subset T \qquad \text{for } \;s_0  \in \Z, \;s_1 \in \Z_{\ge \sigma}. 
\]
\end{lemma}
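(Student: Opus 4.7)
The plan consists of a pointwise step and an extension step. By the periodicity $G_{s_0} = G_0 + 2\pi i s_0$ and the $2\pi i$-invariance of $T$, I reduce to $s_0 = 0$. Since $\alpha$ is the $2\pi i$-periodic lift of a bounded Jordan curve lying outside $\overline{\D(0, e^{2a})}$, $\alpha \subset \{z \in \C : 2a \leq \Re z \leq R_1\}$ for some finite $R_1$, so $T \supset \mathbb H_{>R_1}$. Hence it suffices to show $\Gamma_{0, s_1} \cup \gamma_{0, s_1} \cup V_{0, s_1} \subset \mathbb H_{>R_1}$ for $s_1 \geq \sigma$, equivalently that $\exp(\Gamma_{0, s_1} \cup \gamma_{0, s_1} \cup V_{0, s_1})$ avoids the compact disc $\overline{\D(0, e^{R_1})}$ for $s_1$ large.

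The pointwise step exploits that the points $\exp(v_{0, s_1}) \in \mathcal V$ are distinct $f$-preimages of a single point $w_0 \defeq \exp(v_0)$: using $v_{s_1} = v_0 + 2\pi i s_1$, $\Phi(v_{0, s_1}) = v_{s_1}$, and $\exp \circ \Phi = f \circ \exp$, we have $f(\exp(v_{0, s_1})) = \exp(v_{s_1}) = w_0$. Since $f$ is entire, $f^{-1}(w_0)$ is discrete in $\C$ and accumulates only at $\infty \in \widehat{\mathbb C}$; as $\mathcal V$ contains infinitely many of these preimages, they escape, so $\Re(v_{0, s_1}) \to +\infty$.

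To extend this to the three sets I argue by contradiction. If the conclusion fails along some subsequence $s_{1, k} \to +\infty$, there exist $z_k$ in one of $\Gamma_{0, s_{1, k}}$, $\gamma_{0, s_{1, k}}$, $V_{0, s_{1, k}}$ with $\Re(z_k) \leq R_1$, hence $w_k \defeq \exp(z_k)$ lies in $\overline{\D(0, e^{R_1})}$ and, passing to a subsequence, $w_k \to w_*$. If $w_* \in \mathcal V$, the local inverse branch of $f$ at $w_*$ is well-defined (since $\mathcal V$ contains no critical points of $f$, because $\overline{\Sing(f|_U)} \subset D \subset E$ while $\mathcal V \cap \overline E = \emptyset$), and it pins $w_k$ for $k$ large to a single component $\exp(\Gamma_{0, s_*})$ or $\exp(V_{0, s_*})$, contradicting the distinctness of $s_{1, k}$. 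If $w_* \in \omega = \bd \mathcal V \cap U$, continuity gives $f(w_*) \in f(\omega) = \bd D$; but $f(w_k)$ also converges to a point of $\overline{\mathcal V \cup \omega \cup \exp(\gamma_{s_{1,k}})}$, contradicting $\overline{\mathcal V} \cap \overline D = \emptyset$ (since $\overline D \subset E$ and $\omega \subset \bd E$).

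The main obstacle is the residual case $w_* \in \bd U \setminus \omega$, where $f$ fails to be locally invertible in the standard sense at the boundary point. My plan is to handle this using the logarithmic lift $F$ of Section~\ref{subsec:near_inf} and its class-$\mathcal B$ contraction estimate on lifted tracts. Since $\Re(v_{0, s_1}) > a$ for $s_1$ large, $v_{0, s_1}$ lies in some lifted tract $W_r$ where $F$ is defined, and by \eqref{eq:Phi-F} the inverse branch $G_0$ agrees on a neighbourhood of $v_{0, s_1}$ with a shifted inverse tract branch $z \mapsto H_r(z - 2\pi i k)$. The Eremenko--Lyubich estimate $|H_r'(z)| \lesssim 1/(\Re z - a)$ on $\mathbb H_{> a + c}$ then shows that the images $G_0(\eta_{s_1})$ of the bounded curves $\eta_{s_1}$ from Lemma~\ref{lem:diameta} (and analogously the pullbacks of the relevant part of $V_{s_1}$) have Euclidean diameters shrinking to zero and are clustered around $v_{0, s_1}$, whose real part tends to $+\infty$; combined with the fact that the tails of $\Gamma_{0, s_1}$ have $\Re \to +\infty$ by the parameterization, this rules out the residual case and completes the proof.
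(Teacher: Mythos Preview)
Your overall strategy --- project by $\exp$, accumulate the images in a compact disc, and contradict local invertibility of $f$ --- is exactly the paper's idea. The difference, and the source of your difficulty, is that you carry $V_{0,s_1}$ through the contradiction argument, whereas the paper treats only the \emph{curves} $\gamma_{s_0,s_1}\cup\Gamma_{s_0,s_1}$ and afterwards deduces $V_{s_0,s_1}\subset T$.

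For the curves alone the troublesome case $w_*\in\partial U$ never arises. If $w_k\in\exp(\gamma_{0,s_{1,k}}\cup\Gamma_{0,s_{1,k}})$ then $f(w_k)\in\kappa\cup\omega$ with $\kappa=\exp(\gamma_0)$; here $\kappa$ is compact, $\omega$ is closed in $\C$, and both lie in $U$. Hence $f(w_*)\in\kappa\cup\omega\subset U$, so by the complete invariance of the Julia set $w_*\in U$. Since $\kappa\cup\omega$ is disjoint from $\overline{\Sing(f|_U)}$, a single inverse branch of $f$ exists near $w_*$ and its image meets only one component of $f^{-1}(\kappa\cup\omega)\cap U$; this is the paper's entire argument, with no case split on the location of $w_*$ and no need for the preliminary pointwise step on $v_{0,s_1}$.

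Your proposed handling of the case $w_*\in\partial U$ does not close the gap. The Eremenko--Lyubich contraction only bounds $\diam G_0(\eta_{s_1})$ for the \emph{bounded} arc $\eta_{s_1}$; it says nothing about the unbounded tails $\Gamma_{s_1}|_{(-\infty,0]\cup[t_0,\infty)}$ or about the unbounded domain $V_{s_1}$. Your remark that ``the tails of $\Gamma_{0,s_1}$ have $\Re\to+\infty$ by the parameterization'' is an asymptotic statement as $t\to\pm\infty$ for each fixed $s_1$; it gives no uniform lower bound on $\Re$ along the whole curve as $s_1\to\infty$, so nothing prevents a dip below $R_1$. The parenthetical ``analogously the pullbacks of the relevant part of $V_{s_1}$'' is not an argument: there is no bounded ``relevant part''. (A smaller point: your case $w_*\in\omega$ is also imprecise, since $\kappa$ touches $\partial D$, so $f(w_*)\in\partial D$ is not in contradiction with $f(w_*)\in\overline{\kappa\cup\omega\cup\mathcal V}$; but since $\omega\subset U$ this case is anyway absorbed by the local-invertibility argument above.)
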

\begin{proof} As $\gamma_{s_0, s_1} \cup \Gamma_{s_0, s_1}\cup V_{s_0, s_1} = 
\gamma_{0, s_1} \cup \Gamma_{0, s_1}\cup V_{0, s_1} + 2\pi i s_0$ and $T+ 2\pi i s_0 = T$, we can assume that $s_0$ is fixed. By the properties listed in Subsection~\ref{subsec:U-E}, we have $\Gamma_{s_0, s_1}\cup V_{s_0, s_1} \subset \overline{\tilde V_{s_0, s_1}}$, where $\tilde V_{s_0, s_1}$ is the component of $\C \setminus \Gamma_{s_0, s_1}$ disjoint from $\gamma_{s_0, s_1}$. Hence, noting that $\bd \tilde V_{s_0, s_1} = \Gamma_{s_0, s_1}$ and $T \supset \mathbb H_{>x}$ for large $x >0$, one can see that to prove the lemma, it is sufficient to show $\inf\{\Re(w): w \in \gamma_{s_0, s_1} \cup \Gamma_{s_0, s_1}\} \to +\infty$ for $s_1 \to +\infty$, which is equivalent to $\inf\{|z|: z \in \exp(\gamma_{s_0, s_1}) \cup \exp(\Gamma_{s_0, s_1})\} \to +\infty$ for $s_1 \to +\infty$. 

Suppose that the assertion does not hold. Then there exist sequences $s^{(n)} \in \Z$ and $z_n \in \exp(\gamma_{s_0, s^{(n)}}) \cup \exp(\Gamma_{s_0, s^{(n)}})$, $n \ge 1$, such that the sequence $(s^{(n)})_{n=1}^\infty$ is strictly increasing, $s^{(n)} \to +\infty$ and $z_n \to z$ as $n \to \infty$, for some point $z \in \C$. Note that $\exp(\Gamma_{s_0, s^{(n)}})$ is a component of $f^{-1}(\omega)\cap U$, while $\exp(\gamma_{s_0, s^{(n)}})$ is a component of $f^{-1}(\kappa)\cap U$, where $\kappa = \exp(\gamma_{s_0}) \subset U \setminus D$ is a curve connecting $\bd D$ to $\omega$. Moreover, since $\gamma_{s_0, s^{(n)}} \cup \Gamma_{s_0, s^{(n)}} \subset V_{s_0} \cup \Gamma_{s_0}$ by Lemma~\ref{lem:GammainL}, the sets $\exp(\gamma_{s_0, s^{(n)}}) \cup \exp(\Gamma_{s_0, s^{(n)}})$ are pairwise disjoint for $n \ge 1$. 

We have $z_n \in f^{-1}(\kappa \cup \omega) \cap U$, so by continuity, $z \in f^{-1}(\kappa \cup \omega) \cap U$. Noting that $\kappa \cup \omega$ is disjoint from $\overline{\Sing(f|_U)}$, we see that a neighbourhood of $z$ is a univalent image of a neighbourhood of $f(z)$ under a branch of $f^{-1}$, hence it cannot intersect infinitely many different components of $f^{-1}(\kappa \cup \omega) \cap U$. This makes a contradiction, proving the lemma.
\end{proof}

Lemma~\ref{lem:hats} together with \eqref{eq:vinU} implies the following.

\begin{corollary}\label{cor:vinH}
$v_{s_0, \ldots, s_n} \in T$ for $s_0, \ldots, s_n \in \Z$,  $n \ge 1$, $s_1 \ge \sigma$. 
\end{corollary}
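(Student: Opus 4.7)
The corollary follows by splitting on whether $n=1$ or $n\ge 2$ and applying Lemma~\ref{lem:hats} together with the nesting property \eqref{eq:vinU}. The plan is essentially to chase the definitions, since all the real work is already done in Lemma~\ref{lem:hats}.

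\textbf{Step 1 (case $n=1$).} By the definition of $v_{s_0,s_1}$, we have $v_{s_0,s_1}=G_{s_0}(v_{s_1})$, and by Lemma~\ref{lem:alpha}, $v_{s_1}\in \Gamma_{s_1}([0,+\infty))\subset\Gamma_{s_1}$. Applying $G_{s_0}$ and using the definition $\Gamma_{s_0,s_1}=G_{s_0}(\Gamma_{s_1})$ of the iterated curves in Subsection~\ref{subsec:U-E}, we conclude $v_{s_0,s_1}\in\Gamma_{s_0,s_1}$. Lemma~\ref{lem:hats} then gives $\Gamma_{s_0,s_1}\subset T$ for $s_1\ge\hats$, finishing the case $n=1$.

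\textbf{Step 2 (case $n\ge 2$).} From \eqref{eq:vinU} we have $v_{s_0,\ldots,s_n}\in V_{s_0,\ldots,s_{n-1}}$. The sets $V_{s_0,\ldots,s_k}$ form a decreasing chain in $k$ (recorded in Subsection~\ref{subsec:U-E} as $V_{s_0,\ldots,s_n}\subset V_{s_0,\ldots,s_{n-1}}$), so in particular
\[
V_{s_0,\ldots,s_{n-1}}\subset V_{s_0,s_1}.
\]
Lemma~\ref{lem:hats} asserts $V_{s_0,s_1}\subset T$ whenever $s_1\ge\hats$, and combining these inclusions yields $v_{s_0,\ldots,s_n}\in T$.

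\textbf{Remarks on difficulty.} There is no real obstacle: the nontrivial topological content, namely that for all sufficiently large $s_1$ the whole set $\gamma_{s_0,s_1}\cup\Gamma_{s_0,s_1}\cup V_{s_0,s_1}$ enters $T$, is already packaged in Lemma~\ref{lem:hats}. The only thing to verify is the trivial nesting of the $V_{s_0,\ldots,s_k}$, which is immediate from the definition $V_{s_0,\ldots,s_k}=G_{s_0}\circ\cdots\circ G_{s_{k-1}}(V_{s_k})\subset G_{s_0}\circ\cdots\circ G_{s_{k-1}}(V)=V_{s_0,\ldots,s_{k-1}}$, using $V_{s_k}\subset V$. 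Hence the corollary is a one-line consequence once the two cases are separated.
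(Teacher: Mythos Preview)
Your proof is correct and matches the paper's approach exactly: the paper simply states that the corollary follows from Lemma~\ref{lem:hats} together with \eqref{eq:vinU}, and you have unpacked this into the natural case split ($n=1$ via $v_{s_0,s_1}\in\Gamma_{s_0,s_1}$, and $n\ge 2$ via the nesting $V_{s_0,\ldots,s_{n-1}}\subset V_{s_0,s_1}$).
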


The following lemma gathers the main results of this section. We define some special inverse branches $H_{s_0}^{(s_1)}$ of $F$ and show how they act on the points $v_{s_0, \ldots, s_n}$. These branches will be used in the construction of a suitable expanding conformal repeller for an iterate of $F$, presented in Sections~\ref{sec:prelim_est}--\ref{sec:est_der}. 

\begin{lemma}[{\bf\boldmath Existence of the numbers $k(s_1)$ and maps $H_{s_0}^{(s_1)}$}{}]\label{lem:k(s)} For $s_0,s_1 \in \Z$ there exist a lifted tract $W_{s_0}^{(s_1)}$ of $f$ over infinity and a number $k(s_1) \in \Z$, such that for 
\[                                                                                                                                                          H_{s_0}^{(s_1)} = (F|_{W_{s_0}^{(s_1)}})^{-1} \colon \mathbb H_{>a} \to W_{s_0}^{(s_1)}, \qquad T_{s_0}^{(s_1)} = H_{s_0}^{(s_1)}(T),                                          
\]
the following hold.
\begin{enumerate}[$($a$)$]
\item $v_{s_0, s_1} \in \bd T_{s_0}^{(s_1)}$.
\item $H_{s_0}^{(s_1)} = H_0^{(s_1)} + 2\pi i s_0$.
\item $H_{s_0}^{(s_1)}(v_{k(s_1)}) = v_{s_0,s_1}$ and $(H_{s_0}^{(s_1)})'(v_{k(s_1)}) = G'_{s_0}(v_{s_1})$.
\item $H_{s_0}^{(s_1)}(v_{k(s_1), s_2,  \ldots, s_n}) = v_{s_0,s_1, \ldots, s_n}$
for $s_2, \ldots, s_n \in \Z$, $n \ge 2$, $s_2 \ge \sigma$. 
\end{enumerate}
\end{lemma}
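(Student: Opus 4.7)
The overall strategy is to invoke the uniqueness principle for lifts: both $F$ and $\Phi$ satisfy $\exp\circ F=\exp\circ\Phi=f\circ\exp$ on their domains, so the continuous $\C$-valued map $F-\Phi$ takes values in $2\pi i\Z$ and is locally constant on $\mathrm{dom}(F)\cap\mathrm{dom}(\Phi)$. Everything in the lemma reduces to locating the relevant connected components and computing the resulting constant.

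I first verify $v_{s_0,s_1}\in\mathrm{dom}(F)$. Since $v_{s_1}=v_0+2\pi is_1$ and $v_0\in\alpha\subset\mathbb H_{>2a}$, we have $f(\exp(v_{s_0,s_1}))=\exp(\Phi(v_{s_0,s_1}))=\exp(v_0)$, of modulus $>e^{2a}>e^a$, so $v_{s_0,s_1}$ lies in a unique lifted tract, which I name $W_{s_0,s_1}$; I set $H_{s_0,s_1}=(F|_{W_{s_0,s_1}})^{-1}$ and $T_{s_0,s_1}=H_{s_0,s_1}(T)$. From $\exp(F(v_{s_0,s_1}))=\exp(v_0)$, $F(v_{s_0,s_1})=v_k$ for some $k\in\Z$, and the $2\pi i$-periodicity of $F$ applied to $v_{s_0,s_1}=v_{0,s_1}+2\pi is_0$ forces this integer to depend only on $s_1$; I denote it $k(s_1)$. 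Parts (a)--(c) now follow directly: (a) from $v_{s_0,s_1}=H_{s_0,s_1}(v_{k(s_1)})$ and $v_{k(s_1)}\in\alpha=\partial T$; (b) from the $2\pi i$-translation symmetry of lifted tracts, $W_{s_0,s_1}=W_{0,s_1}+2\pi is_0$, together with the $2\pi i$-periodicity of $F$; and for the derivative identity in (c), differentiating $\exp\circ F=\exp\circ\Phi$ at $v_{s_0,s_1}$ and using $\exp(F(v_{s_0,s_1}))=\exp(v_0)=\exp(\Phi(v_{s_0,s_1}))$ yields $F'(v_{s_0,s_1})=\Phi'(v_{s_0,s_1})$, whence $H'_{s_0,s_1}(v_{k(s_1)})=1/\Phi'(v_{s_0,s_1})=G'_{s_0}(v_{s_1})$.

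The main obstacle is part (d). Using Lemma~\ref{lem:hats} with the pair of indices $(s_1,s_2)$ and the condition $s_2\ge\hats$, we have $v_{s_1,\ldots,s_n}\in V_{s_1,s_2}\subset T\subset\mathbb H_{>2a}$, so $\Phi(v_{s_0,s_1,\ldots,s_n})$ has real part $>2a$, placing $v_{s_0,s_1,\ldots,s_n}$ in $V_{s_0}\cap\mathrm{dom}(F)$. The crux is to show this point lies in the \emph{same} lifted tract $W_{s_0,s_1}$ as $v_{s_0,s_1}$, equivalently in the same component of $V_{s_0}\cap\mathrm{dom}(F)=G_{s_0}(V\cap\mathbb H_{>a})$. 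I would establish this by exhibiting a connecting path along the $\Gamma$--$\gamma$ tree of Lemma~\ref{lem:GammainL}: starting from $v_{s_0,s_1}\in\Gamma_{s_0,s_1}$, traverse an arc of $\Gamma_{s_0,s_1}$ to $a_{s_0,s_1,s_2}$, cross $\gamma_{s_0,s_1,s_2}$ into $V_{s_0,s_1,s_2}=G_{s_0}(V_{s_1,s_2})$, and continue descending through the deeper layers $V_{s_0,s_1,\ldots,s_{k-1}}$ down to $v_{s_0,s_1,\ldots,s_n}$. Since $V_{s_1,s_2}\subset T\subset\mathbb H_{>2a}$, the $\Phi$-image of everything from $V_{s_0,s_1,s_2}$ onwards lies in $\mathbb H_{>a}$ automatically; the delicate point is ensuring that the initial arc along $\Gamma_{s_0,s_1}$ remains in $\mathrm{dom}(F)$, which I would control by choosing the auxiliary curves $\gamma_s$ in the setup so that the corresponding arc of $\Gamma_{s_1}$ from $v_{s_1}$ to $a_{s_1,s_2}$ stays in $\mathbb H_{>a}$, exploiting that $\Re(\Gamma_{s_1}(t))\to+\infty$ as $t\to\pm\infty$ and, if necessary, routing the path through the interior of $V_{s_1}\cap T$ rather than strictly along the boundary.

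Once the tract identification is in place, the constant $F-\Phi$ on the relevant component equals its value at $v_{s_0,s_1}$, namely $v_{k(s_1)}-v_{s_1}=2\pi i(k(s_1)-s_1)$. Evaluating at $v_{s_0,s_1,\ldots,s_n}$ and applying the translation identity $v_{r+m,s_2,\ldots,s_n}=v_{r,s_2,\ldots,s_n}+2\pi im$ (immediate from $G_s=G_0+2\pi is$), we obtain
\[
F(v_{s_0,s_1,\ldots,s_n})=v_{s_1,\ldots,s_n}+2\pi i(k(s_1)-s_1)=v_{k(s_1),s_2,\ldots,s_n}.
\]
Since $v_{s_0,s_1,\ldots,s_n}\in W_{s_0,s_1}$, applying $H_{s_0,s_1}$ inverts $F$ and yields (d), completing the proof.
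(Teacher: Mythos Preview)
Your approach is correct and coincides with the paper's: both exploit that $F-\Phi$ is a locally constant $2\pi i\Z$-valued function and reduce (d) to a connectivity statement, which the paper frames as showing $H_{s_0,s_1}=\tilde G_{s_0}$ (where $\tilde G_{s_0}(z)=G_{s_0}(z+2\pi i(s_1-k(s_1)))$) on the component of $V\cap\mathbb H_{>a}$ containing $v_{k(s_1)}$---precisely your claim transported by $\Phi|_{V_{s_0}}$. Your derivative computation for (c) is in fact a little cleaner than the paper's local identity argument.

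On the one point you flag as delicate: modifying the curves $\gamma_s$ would not help, since those curves do not affect the arc of $\Gamma_{s_1}$ between $v_{s_1}$ and $a_{s_1,s_2}$. The paper's resolution uses Lemma~\ref{lem:alpha}(c) directly: since $\alpha$ meets $\Gamma_{k(s_1)}([0,+\infty))$ only at $v_{k(s_1)}=\Gamma_{k(s_1)}(t_0)$ and $\Re(\Gamma_{k(s_1)}(t))\to+\infty$, the ray $\Gamma_{k(s_1)}([t_0,+\infty))$ lies in $\overline T\subset\mathbb H_{>2a}$; and since Lemma~\ref{lem:hats} places $\gamma_{k(s_1),s_2}$ (hence its endpoint $a_{k(s_1),s_2}\in\Gamma_{k(s_1)}$) in $T$ for $s_2\ge\hats$, the attachment point sits on this ray and the deeper tree lies in $T$ by Lemma~\ref{lem:GammainL}. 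No rerouting is needed.
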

\begin{proof}
By Lemma~\ref{lem:alpha}, we have $v_{s_1} \in \bd T \cap \Gamma_{s_1}$ for $s_1 \in \Z$. Consequently, by \eqref{eq:TinH},  for $s_0 \in \Z$ there holds $|f(e^{v_{s_0,s_1}})| = |e^{\Phi(v_{s_0,s_1})}| = |e^{v_{s_1}}| = e^{\Re(v_{s_1})} > e^a$, so $v_{s_0,s_1} \in F^{-1}(\mathbb H_{>a})$. This together with \eqref{eq:Phi-F} and \eqref{eq:vinU} implies $e^{v_{s_1}} = e^{\Phi(v_{s_0,s_1})} = e^{F(v_{s_0,s_1})}$. Therefore, $F(v_{s_0,s_1}) \in \exp^{-1}(e^{v_{s_1}}) = \{v_s\}_{s\in \Z}$. Moreover, $F(v_{s_0,s_1})$ does not depend on $s_0$, as $F$ is periodic with period $2\pi i$. It follows that for every $s_1 \in \Z$ there exists $k(s_1) \in \Z$, such that 
\[
F(v_{s_0,s_1}) = v_{k(s_1)}
\]
for every $s_0 \in \Z$. Consequently, there exist a lifted tract $W_{s_0}^{(s_1)}$ of $f$ over infinity and an inverse branch of $F$
\[
H_{s_0}^{(s_1)}\colon \mathbb H_{>a} \to W_{s_0}^{(s_1)},
\]
such that 
\[
H_{s_0}^{(s_1)}(v_{k(s_1)}) = v_{s_0,s_1}.
\]
We have $H_{s_0}^{(s_1)} = H_0^{(s_1)} + 2\pi i s_0$, since $F$ is periodic with period $2\pi i$. Obviously, $v_{s_0, s_1} \in \bd T_{s_0}^{(s_1)}$ for $T_{s_0}^{(s_1)} = H_{s_0}^{(s_1)}(T)$. See Figure~\ref{fig:T}.

Let 
\[
\tilde G_{s_0}(z) = G_{s_0}(z + 2\pi i (s_1- k(s_1)))
\]
for $z \in V$. Note that
\[
\tilde G_{s_0}(v_{k(s_1)}) = v_{s_0,s_1},
\]
which implies
\[
\Phi(\tilde G(v_{k(s_1)})) - F(\tilde G(v_{k(s_1)})) = v_{s_1} - v_{k(s_1)} = 2\pi i (s_1- k(s_1)).
\]
Moreover, by \eqref{eq:Phi-F}, \eqref{eq:vinU} and \eqref{eq:TinH}, we have $e^{\Phi(\tilde G(z))} = e^{F(\tilde G(z))}$ for $z$ in a small neighbourhood of $v_{k(s_1)}$, so $\Phi(\tilde G(z)) - F(\tilde G(z)) \in 2\pi i \Z$. Hence, by continuity, 
\[
z  + 2\pi i (s_1- k(s_1)) - F(\tilde G(z)) = \Phi(\tilde G(z)) - F(\tilde G(z)) = 2\pi i (s_1- k(s_1)),
\]
for $z$ in a neighbourhood of $v_{k(s_1)}$, which provides $z = F(\tilde G(z)) $ and, consequently, $H_{s_0}^{(s_1)}(z) =  \tilde G_{s_0}(z)$ for $z$ in a neighbourhood of $v_{k(s_1)}$. In particular, 
\[
(H_{s_0}^{(s_1)})'(v_{k(s_1)}) = \tilde G'_{s_0}(v_{k(s_1)}) = G'_{s_0}(v_{s_1}).
\]
By holomorphicity, $H_{s_0}^{(s_1)}(z) =  \tilde G_{s_0}(z)$ for $z$ in the connected component of $V \cap \mathbb H_{>a}$ containing $v_{k(s_1)}$. Let $t_0 > 0$ be such that $\Gamma_s(t_0) = v_s$, $s \in \Z$, and define
\[
Y = \Gamma_{k(s_1)}([t_0, +\infty)) \cup \bigcup_{n=2}^\infty \; \bigcup_{s_2, \ldots, s_n \in \Z, \, s_2 \ge \sigma} \gamma_{k(s_1), s_2, \ldots, s_n} \cup \Gamma_{k(s_1), s_2, \ldots, s_n}.
\]
By \eqref{eq:V}, \eqref{eq:TinH} and Lemma~\ref{lem:alpha}, the set $\Gamma_{k(s_1)}([t_0, +\infty))$ is a connected subset of $V\cap \mathbb H_{>a}$. Moreover, by Lemma~\ref{lem:GammainL} (applied for $n= 1, 2$), Lemma~\ref{lem:hats} and \eqref{eq:TinH}, for every $s_2 \in \Z_{\ge \sigma}$, the set
\[
\gamma_{k(s_1), s_2} \cup \Gamma_{k(s_1), s_2} \cup \bigcup_{n=3}^\infty\bigcup_{s_3, \ldots, s_n \in \Z} \gamma_{k(s_1), s_2, \ldots, s_n} \cup \Gamma_{k(s_1), s_2, \ldots, s_n}
\]
is a connected subset of $V_{k(s_1)} \cap \mathbb H_{>a} \subset V \cap \mathbb H_{>a}$. Since $\Gamma_{k(s_1)}([t_0, +\infty))$ connects $v_{k(s_1)}$ to $\gamma_{k(s_1), s_2}$ for every $s_2\in \Z_{\ge \sigma}$, we conclude that $Y$ is contained in the connected component of $V \cap \mathbb H_{>a}$ containing $v_{k(s_1)}$. Hence, $H_{s_0}^{(s_1)}(z) =  \tilde G_{s_0}(z)$ for every $z \in Y$. In particular, as $v_{k(s_1), s_2,  \ldots, s_n} \in Y$ for $s_1, \ldots, s_n \in \Z$, $n \ge 2$, $s_2 \in \Z_{\ge \sigma}$, we obtain
\[
H_{s_0}^{(s_1)}(v_{k(s_1), s_2,  \ldots, s_n}) = G_{s_0}(v_{s_1, \ldots, s_n}) = v_{s_0, \ldots, s_n}.
\]
This completes the proof.
\end{proof}

We end this section by indicating some properties of the branches $H_{s_0}^{(s_1)}$ from Lemma~\ref{lem:k(s)}. First, note that since for every $x > 0$, the real part of $F$ is bounded from above on $\mathbb H_{\le x} \cap W_{s_0}^{(s_1)}$, there holds
\begin{equation}\label{eq:to_infty}
\text{if } \quad z_n \in \mathbb H_{>a},\; |z_n| \to \infty, \quad\text{ then }\quad \Re(H_{s_0}^{(s_1)}(z_n)) \to +\infty.
\end{equation}
Contracting properties of the branches $H_{s_0}^{(s_1)}$ are described in the following lemma (assuming $a$ is chosen sufficiently large).

\begin{lemma}\label{lem:contract}
For every $z_1, z_2 \in \mathbb H_{>2a}$,
\[
|H_{s_0}^{(s_1)}(z_1) - H_{s_0}^{(s_1)}(z_2)| \le \frac{4\pi |z_1 - z_2|}{\sqrt{(\Re(z_1)-a)(\Re(z_2)-a)}} \le \frac{|z_1 - z_2|}{2}.
\]
\end{lemma}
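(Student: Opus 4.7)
The plan is to reduce the whole estimate to a pointwise derivative bound on $H = H_{s_0,s_1}$ and then integrate along the straight-line segment from $z_1$ to $z_2$, which remains in $\mathbb H_{>2a}$. The key is to establish
\[
|H'(z)| \le \frac{4\pi}{\Re(z)-a} \qquad \text{for all } z\in\mathbb H_{>a}.
\]
To prove this, fix $z \in \mathbb H_{>a}$; then $\D(z,\Re(z)-a)\subset \mathbb H_{>a}$, on which $H$ is univalent. By the Koebe one-quarter theorem, $H(\D(z,\Re(z)-a))$ contains $\D(H(z),(\Re(z)-a)|H'(z)|/4)$. On the other hand, the image lies in the lifted tract $W_{s_0,s_1}$, and the lifted tracts over a fixed tract of $f$ are the pairwise disjoint translates $W_r = W_0 + 2\pi i r$, $r \in \Z$, as recorded in Subsection~\ref{subsec:near_inf}. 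In particular $W_{s_0,s_1}\cap(W_{s_0,s_1}+2\pi i)=\emptyset$, which forbids $W_{s_0,s_1}$ from containing any closed vertical segment of length $\ge 2\pi$. Since an open disc of radius $R$ contains vertical segments of every length $<2R$, this forces $(\Re(z)-a)|H'(z)|/4\le \pi$, as claimed.

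Integrating along $z(t)=(1-t)z_1+tz_2$ and writing $x_i=\Re(z_i)-a>0$ yields
\[
|H(z_1)-H(z_2)|\le 4\pi|z_1-z_2|\int_0^1 \frac{dt}{x_1+t(x_2-x_1)} = 4\pi|z_1-z_2|\cdot\frac{\log(x_2/x_1)}{x_2-x_1},
\]
with the natural limiting value $1/x_1$ when $x_1=x_2$. The first inequality of the lemma then reduces to the elementary estimate $\log(b/c)/(b-c)\le 1/\sqrt{bc}$ for positive $b,c$, which after the substitution $b=r^2c$ becomes $2r\log r\le r^2-1$, a straightforward consequence of $(r-1)^2\ge 0$ after differentiating.

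For the second inequality, $z_1,z_2\in\mathbb H_{>2a}$ gives $x_1,x_2>a$, hence $4\pi/\sqrt{x_1x_2} < 4\pi/a$. The number $a$ fixed at the beginning of Subsection~\ref{subsec:near_inf} is required only to be sufficiently large so that $\overline{\Sing(f)}\subset\D(0,e^a)$ and all subsequent constructions go through; enlarging it further to ensure $a\ge 8\pi$ is harmless and delivers the factor $1/2$. I do not anticipate any genuine obstacle here; the two delicate points are the geometric "no closed vertical segment of length $2\pi$" argument powering the derivative bound, and the elementary inequality that converts the asymmetric integral $\log(x_2/x_1)/(x_2-x_1)$ into the symmetric form $1/\sqrt{x_1x_2}$ appearing in the statement.
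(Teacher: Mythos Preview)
Your proof is correct. The paper takes a different but closely related route: it invokes a standard hyperbolic-metric estimate (cited from an external reference) which gives
\[
|z_1 - z_2| \le 4\pi \asinh \frac{|F(z_1) - F(z_2)|}{2\sqrt{(\Re(F(z_1))-a)(\Re(F(z_2))-a)}}
\]
for $z_1,z_2$ in a lifted tract, and then the lemma follows from $\asinh(x)\le x$ together with $a$ large. That estimate is ultimately powered by the same geometric fact you isolate---the tract $W_{s_0,s_1}$ is disjoint from its $2\pi i$-translate, hence contains no disc of radius larger than $\pi$---but the paper packages it as a lower bound on the hyperbolic density of the tract combined with the explicit formula for hyperbolic distance in a half-plane.

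Your argument is more self-contained: you extract the pointwise bound $|H'(z)|\le 4\pi/(\Re(z)-a)$ directly via Koebe one-quarter, then integrate over the straight segment and reduce to the logarithmic--geometric mean inequality $\log(b/c)/(b-c)\le 1/\sqrt{bc}$. This avoids citing an external estimate and makes the constants transparent; the price is the small detour through the log-mean inequality, whereas the paper's $\asinh$ formulation yields the symmetric $\sqrt{x_1x_2}$ denominator for free from the half-plane distance formula. Both approaches require enlarging $a$ (to at least $8\pi$) for the final factor $1/2$, which is indeed harmless as you note.
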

\begin{proof}
This is a direct consequence of a standard estimate of the hyperbolic metric (see e.g.~\cite{coding}), providing
\[
|z_1 - z_2| \le 4 \pi \asinh \frac{|F(z_1) - F(z_2)|}{2 \sqrt{(\Re(F(z_1))-a)(\Re(F(z_2)) - a)}}
\]
for $z_1, z_2 \in W_{s_0}^{(s_1)}$.
\end{proof}

\section{Construction of a conformal repeller -- preliminary estimates}\label{sec:prelim_est}

In this and the following section we construct a suitable expanding conformal repeller for some high iterate of the map $F$. To this end, we define an iterated function system of its appropriate inverse branches on some rectangle $K \subset \mathbb H_{>2a}$. 

\subsection{\boldmath Definition of the set $K$}\label{subsec:K}

For simplification, denote
\[
H_\sigma = H_{k(\sigma)}^{(\sigma)}, \qquad T_\sigma = T_{k(\sigma)}^{(\sigma)}
\]
for $\sigma$ from Lemma~\ref{lem:hats} and $H_{k(\sigma)}^{(\sigma)}$, $T_{k(\sigma)}^{(\sigma)}$ from Lemma~\ref{lem:k(s)}. Fix a large number 
\begin{equation}\label{eq:b}
b>2a+1
\end{equation}
(in the sequel, we may need to increase $b$ several times, if necessary). Choose a point
\[
u = r +iy \in T_\sigma,
\]
where 
\begin{equation}\label{eq:r}
r > \max(b+1, \inf\{\Re(z): z\in T_\sigma\}).
\end{equation}
By \eqref{eq:to_infty}, increasing $r$ we can assume 
\begin{equation}\label{eq:KinT}
\mathbb H_{> r-2}, F^{-1}(\mathbb H_{> r-2}), F^{-2}(\mathbb H_{> r-2}), F^{-3}(\mathbb H_{> r-2}) \subset \mathbb H_{>2a}.
\end{equation}

Let
\[
K = \{z \in \C: \Re(z) \in [r-1, r+1], \; \Im(z) \in [y-d, y+d]\},
\]
where $d>1$ is so large that
\begin{equation}\label{eq:throughK}
\{z \in T_\sigma: \Re(z) \in [r-1, r+1]\} \subset 
\{z \in \C: \Im(z) \in [y-d+1, y+d-1]\}.
\end{equation}
See Figure~\ref{fig:K}.

\begin{figure}[ht!]
\centerline{\includegraphics[width=0.5\textwidth]{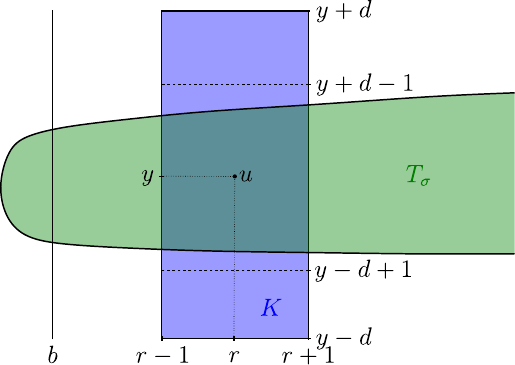}}
\caption{The rectangle $K$.}\label{fig:K}
\end{figure}

\subsection{\boldmath Definition and estimates for the branches $\psi_{j,l}$}
\label{subsec:psi}
To construct a suitable iterated function system on $K$, we first define some special inverse branches of $F^3$. More precisely, by \eqref{eq:KinT}, we can consider the inverse branches of the form
\[
\psi_{j, l}\colon\mathbb H_{> r-2} \to \C, \qquad \psi_{j, l} = H_{k(\sigma)}^{(j)} \circ H_{k(j)}^{(l)} \circ H_{k(l)}^{(\sigma)},\qquad  j, l \in \Z_{\ge \sigma}.
\]
In this subsection we describe the properties of the branches $\psi_{j, l}$.

\begin{lemma}\label{lem:distinct} The maps $\psi_{j, l}$, $j, l \in \Z_{\ge \sigma}$, are pairwise distinct.
\end{lemma}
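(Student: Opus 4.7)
The plan is to evaluate both maps at the single point $v_{k(\sigma)}$ and use Lemma~\ref{lem:k(s)} to produce a four-index ``coordinate'' that can be read off from the result; once this is done, pairwise disjointness of the logarithmic lifts $V_s$ forces the indices to agree one by one.

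First I would compute $\psi_{j,l}(v_{k(\sigma)})$ explicitly. By Lemma~\ref{lem:k(s)}(c), $H_{k(l),\sigma}(v_{k(\sigma)}) = v_{k(l),\sigma}$. Since $j, l \in \Z_{\ge \sigma}$, two successive applications of Lemma~\ref{lem:k(s)}(d), first with $(s_0,s_1,s_2)=(k(j),l,\sigma)$ and then with $(s_0,s_1,s_2,s_3)=(k(\sigma),j,l,\sigma)$, are legitimate because the relevant index $s_2$ is $\sigma$ and $l$ respectively, both at least $\sigma$. They give
\[
\psi_{j,l}(v_{k(\sigma)}) \;=\; v_{k(\sigma),\,j,\,l,\,\sigma} \;=\; G_{k(\sigma)} \circ G_{j} \circ G_{l}(v_{\sigma}).
\]

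Now assume $\psi_{j,l} = \psi_{j',l'}$. Equating the values at $v_{k(\sigma)}$ and using the univalence of $G_{k(\sigma)}$ on $V$ gives $G_j \circ G_l(v_\sigma) = G_{j'} \circ G_{l'}(v_\sigma)$. Since $v_\sigma \in \Gamma_\sigma \subset V$, we have $G_l(v_\sigma) = v_{l,\sigma} \in V_l$, so the left-hand side lies in $V_j$, while analogously the right-hand side lies in $V_{j'}$. As the sets $V_s$, $s \in \Z$, are pairwise disjoint components of $\exp^{-1}(\mathcal V)$, this forces $j = j'$. Cancelling $G_j$ by univalence reduces the identity to $v_{l,\sigma} = v_{l',\sigma}$, and the same disjointness argument applied to $V_l, V_{l'}$ yields $l = l'$. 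The only hypothesis needing verification is $s_2 \ge \sigma$ in Lemma~\ref{lem:k(s)}(d), which is exactly what the restriction $j, l \in \Z_{\ge \sigma}$ ensures; conceptually, Lemma~\ref{lem:k(s)}(d) is what converts a composition of inverse branches of $F$ into a nested composition of the simpler maps $G_s$ acting on the canonical points $v_s$, after which injectivity reduces to disjointness of the logarithmic lifts.
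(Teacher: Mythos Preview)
Your argument is cleaner than the paper's in its second half, but there is a genuine domain-of-definition gap in the first step. The maps $\psi_{j,l}$ are introduced as maps $\mathbb H_{>r-2}\to\C$, and since $r$ is chosen large \emph{after} the curve $\alpha$ (and hence $v_{k(\sigma)}\in\alpha$) is fixed, the point $v_{k(\sigma)}$ will in general not lie in $\mathbb H_{>r-2}$; indeed $r-2>b-1>2a$ while one only knows $v_{k(\sigma)}\in\mathbb H_{>2a}$. So from the hypothesis ``$\psi_{j,l}=\psi_{j',l'}$'' (i.e.\ equality on $\mathbb H_{>r-2}$) you cannot simply ``equate the values at $v_{k(\sigma)}$''. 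Your computation with the individual $H$-maps is correct --- each composition does make sense at $v_{k(\sigma)}$, by Corollary~\ref{cor:vinH} --- but you have not shown that the natural domain of the composition $H_{k(\sigma),j}\circ H_{k(j),l}\circ H_{k(l),\sigma}$ (the set of $z$ with all intermediate images in $\mathbb H_{>a}$) is connected, so the identity theorem does not automatically propagate the equality from $\mathbb H_{>r-2}$ to a neighbourhood of $v_{k(\sigma)}$.

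The paper avoids this by evaluating at $u\in K\subset\mathbb H_{>r-2}$ and using the lifted-tract structure: from $\psi_{j,l}(u)=\psi_{j',l'}(u)$ and $\psi_{j,l}(u)\in T_{k(\sigma),j}$, $\psi_{j',l'}(u)\in T_{k(\sigma),j'}$, disjointness of the sets $T_{s_0,s_1}$ forces $H_{k(\sigma),j}=H_{k(\sigma),j'}$; iterating under $F$ gives that all three $H$-factors coincide. Once that is known, your evaluation at $v_{k(\sigma)}$ becomes legitimate (the two compositions are then literally the same function), and your $V_s$-disjointness argument finishes the proof --- in fact more transparently than the paper's own endgame with $k(l)=k(l')$ etc. So your idea is sound, but some form of the tract argument at a point of $\mathbb H_{>r-2}$ (or an explicit analytic-continuation argument connecting $\mathbb H_{>r-2}$ to $v_{k(\sigma)}$) is needed to close the gap.
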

\begin{proof} For simplification, denote
\begin{equation}\label{eq:u_s}
u_{l} = H_{k(l)}^{(\sigma)}(u), \qquad u_{j, l} = H_{k(j)}^{(l)}(u_{l})
\end{equation}
for $j, l \in \Z_{\ge \sigma}$ (see Figure~\ref{fig:U}). 

\begin{figure}[ht!]
\centerline{\includegraphics[width=0.99\textwidth]{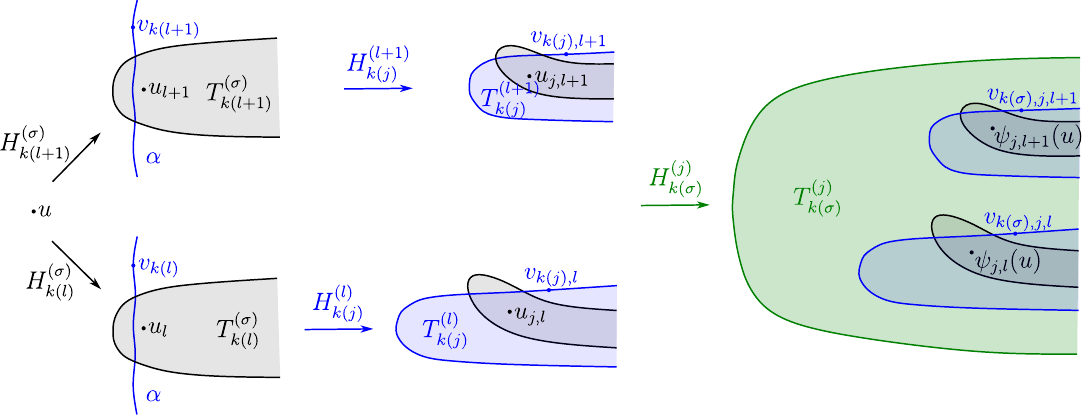}}
\caption{The location of the points $u_{l}$, $u_{j, l}$ and $\psi_{j, l}(u)$.}\label{fig:U}
\end{figure} 

Suppose $\psi_{j, l} = \psi_{j', l'}$ for some $j,l,  j', l' \in \Z_{\ge \sigma}$. Note that $\psi_{j, l}(u) \in T_{k(\sigma)}^{(j)}$, $\psi_{j', l'}(u) \in T_{k(\sigma)}^{(j')}$, where $T_{k(\sigma)}^{(j)}$, $T_{k(\sigma)}^{(j')}$ are either disjoint or equal. Therefore, $T_{k(\sigma)}^{(j)} = T_{k(\sigma)}^{(j')}$, which implies 
\begin{equation}\label{eq:H1}
H_{k(\sigma)}^{(j)} = H_{k(\sigma)}^{(j')}
\end{equation}
and, consequently, $u_{j, l} = u_{j', l'}$ by the univalency of the map. Analogously, $u_{j, l} \in T_{k(j)}^{(l)}$, $u_{j', l'} \in T_{k(j')}^{(l')}$, where $T_{k(j)}^{(l)}$, $T_{k(j')}^{(l')}$ are either disjoint or equal, which in turn implies $T_{k(j)}^{(l)} = T_{k(j')}^{(l')}$, 
\begin{equation}\label{eq:H2}
H_{k(j)}^{(l)} = H_{k(j')}^{(l')}
\end{equation}
and $u_{l} = u_{l'}$. Finally, $u_{l} \in T_{k(l)}^{(\sigma)}$, $u_{l'} \in T_{k(l')}^{(\sigma)}$, where $T_{k(l)}^{(\sigma)}$, $T_{k(l')}^{(\sigma)}$ are either disjoint or equal, which provides $T_{k(l)}^{(\sigma)} = T_{k(l')}^{(\sigma)}$. As $T_{k(l)}^{(\sigma)} = T_{k(l')}^{(\sigma)} + 2\pi i (k(l) - k(l'))$ by Lemma~\ref{lem:k(s)}, we have $k(l) = k(l')$. This together with \eqref{eq:H2} and Lemma~\ref{lem:k(s)} implies
\[
v_{k(j),l} = H_{k(j)}^{(l)}(v_{k(l)}) = H_{k(j')}^{(l')}(v_{k(l')}) = v_{k(j'),l'}.
\]
By the definition of $v_{k(j),l}$, $v_{k(j'),l'}$, we have $k(j) = k(j')$ and $l = l'$. Hence, by \eqref{eq:H1} and Lemma~\ref{lem:k(s)}, 
\[
v_{k(\sigma),j} = H_{k(\sigma)}^{(j)}(v_{k(j)}) = H_{k(\sigma)}^{(j')}(v_{k(j')}) = v_{k(\sigma),j'}
\]
and $j = j'$. This implies $(j, l) = (j', l')$, which ends the proof.
\end{proof}

The next lemma shows that the images of the point $u$ under the branches $\psi_{j,l}$ can be `approximated' by points $v_{k(\sigma), j, l}$, while subsequent points of the sequence $(v_{k(\sigma), j, l})_{l=\sigma}^\infty$ are located at a uniformly bounded distance to each other.

\begin{lemma}\label{lem:a-u} There exists $c>0$ such that for $j, l \in \Z_{\ge \sigma}$,
\[
|\psi_{j,l}(u) - v_{k(\sigma), j, l}|,  |v_{k(\sigma), j, l} - v_{k(\sigma), j, l+1}| < c
\]
and
\[
|\psi_{j,l}'(u)| \ge \frac 1 c | v_{k(\sigma), j, l} - v_{k(\sigma), j, l+1}|.
\]
\end{lemma}
\begin{proof} We continue using the notation from \eqref{eq:u_s} (cf.~Figure~\ref{fig:U}). 
By Lemma~\ref{lem:k(s)}, 
\begin{equation}\label{eq:dist-1}
|u_{l} - v_{k(l)}| = |H_{k(l)}^{(\sigma)}(u) - v_{k(l)}| = c_1
\end{equation}
for $c_1 = |H_\sigma(u) - v_{k(\sigma)}|$. Hence, by \eqref{eq:TinH}, \eqref{eq:KinT}, Corollary~\ref{cor:vinH} and Lemmas~\ref{lem:k(s)} and~\ref{lem:contract}, 
\begin{equation}\label{eq:dist-2}
\begin{aligned}
|u_{j,l} - v_{k(j), l}| &= 
|H_{k(j)}^{(l)} (u_{l}) - H_{k(j)}^{(l)} (v_{k(l)})| \le \frac{c_1}{2},\\
|\psi_{j,l}(u) - v_{k(\sigma), j, l}| &= 
|H_{k(\sigma)}^{(j)}(u_{j,l}) - H_{k(\sigma)}^{(j)}(v_{k(j), l})|\le\frac{c_1}{4}.
\end{aligned}
\end{equation}
Furthermore, by Lemma~\ref{lem:diameta}, 
\begin{equation}\label{eq:dist-2a}
|v_{k(j), l} - v_{k(j),l+1}| < c_2
\end{equation}
for some constant $c_2 > 0$. Hence, by Corollary~\ref{cor:vinH} and Lemmas~\ref{lem:k(s)} and~\ref{lem:contract},  
\begin{align*}
|v_{k(\sigma), j, l} - v_{k(\sigma), j, l+1}| &= 
|H_{k(\sigma)}^{(j)} (v_{k(j), l}) - H_{k(\sigma)}^{(j)}(v_{k(j), l+1})|\\
&\le\frac{1}{2}|v_{k(j), l} - v_{k(j), l+1}| < \frac{c_2}{2}.
\end{align*}
This shows the first part of the lemma. To prove the second one, note that by the chain rule,
\begin{equation}\label{eq:chain}
|\psi_{j,l}'(u)|= |(H_{k(l)}^{(\sigma)})'(u)| \, |(H_{k(j)}^{(l)})'(u_{l})|\, |(H_{k(\sigma)}^{(j)})'(u_{j, l})|.
\end{equation}
As $H_{k(l)}^{(\sigma)} = H_\sigma + 2\pi i (k(l) - k(\sigma))$ by Lemma~\ref{lem:k(s)}, we have 
\begin{equation}\label{eq:der-1}
|(H_{k(l)}^{(\sigma)})'(u)| = c_3
\end{equation}
for $c_3 = |H_\sigma'(u)|$. By \eqref{eq:TinH}, \eqref{eq:KinT} and \eqref{eq:dist-1}, Koebe's distortion theorem implies
\[
|(H_{k(j)}^{(l)})'(u_{l})| \ge c_4 |(H_{k(j)}^{(l)})'(v_{k(l)})|
\]
for some constant $c_4 > 0$, and by Lemmas~\ref{lem:diameta} and~\ref{lem:k(s)}, 
\[
|(H_{k(j)}^{(l)})'(v_{k(l)})| = |G'_{k(j)}(v_{l})| \ge c_5 |v_{k(j), l} - v_{k(j), l+1}|
\]
for some constant $c_5 > 0$. Hence,
\begin{equation}\label{eq:der-2}
|(H_{k(j)}^{(l)})'(u_{l})| \ge c_4 c_5 |v_{k(j), l} - v_{k(j), l+1}|. 
\end{equation}
Similarly, by \eqref{eq:KinT}, \eqref{eq:dist-2}, \eqref{eq:dist-2a}, Corollary~\ref{cor:vinH}, Lemma~\ref{lem:k(s)} and Koebe's distortion theorem,
\begin{align*}
|(H_{k(\sigma)}^{(j)})'(u_{j, l})| &\ge c_6 |(H_{k(\sigma)}^{(j)})'(v_{k(j), l})|\\ &\ge c_7 \frac{|H_{k(\sigma)}^{(j)}(v_{k(j), l}) - H_{k(\sigma)}^{(j)}(v_{k(j), l+1})|}{|v_{k(j), l} - v_{k(j), l+1}|}
= c_7 \frac{|v_{k(\sigma), j, l} - v_{k(\sigma), j, l+1}|}{|v_{k(j), l} - v_{k(j), l+1}|}
\end{align*}
for some constants $c_6, c_7 > 0$. This together with \eqref{eq:chain}, \eqref{eq:der-1} and \eqref{eq:der-2} proves the second part of the lemma.
\end{proof}

\section{Construction of a conformal repeller -- definition of the system}\label{sec:constr}

In this section we construct a suitable iterated function system on $K$, consisting of some inverse branches of $F^{m+3}$ of the form $\phi_{j, l} = H_\sigma^m \circ \psi_{j, l}$, for a sufficiently large $m$. 

The following lemma shows that arbitrarily large discs, provided they are contained in the part of the plane with sufficiently large real part, can be moved to the left half-plane $\mathbb H_{<b}$, by applying a high iterate of the branch $H_\sigma$.

\begin{lemma}\label{lem:Re<} For every $R > 0$ one can find $x = x(R) >0$, such that for every $z \in \mathbb H_{>x}$ there exists $m(z)$, which is the minimal number $m \in \N$ such that $H_\sigma^m$ is defined on $\D(z,R)$, with
\[
\D(z,R), H_\sigma(\D(z,R)), \ldots, H_\sigma^{m-1}(\D(z,R)) \subset \mathbb H_{>2a}, \qquad  H_\sigma^m(\D(z,R)) \subset \mathbb H_{< b}.
\]
Furthermore, for every $c>0$ there exists $q = q(c) \in \N$ such that for every $R > 0$, if $z_1, z_2 \in \mathbb H_{>x}$ for $x=x(R)$ and $|z_1 - z_2|< c$, then $|m(z_1) - m(z_2)| < q$. 
\end{lemma}
\begin{proof} Set 
\[
z_0 = 3a, \qquad R_0 =  4(|H_\sigma(z_0) - z_0|).
\]
Increasing the constant $b$ from \eqref{eq:b} if necessary, we can assume
\begin{equation}\label{eq:w-z0}
H_\sigma(\D(z_0, R_0) \cap \mathbb H_{>2a}) \subset \mathbb H_{< b-1}.
\end{equation}
By Lemma~\ref{lem:contract}, for every $w \in \mathbb H_{>2a}$,
\[
|H_\sigma(w) - z_0| \le |H_\sigma(w) - H_\sigma(z_0)| + |H_\sigma(z_0) - z_0| \le \frac{|w-z_0|}{2} + \frac{R_0}{4}.
\]
Hence, 
\begin{equation}\label{eq:/2}
\text{if } \quad w \in \mathbb H_{>2a} \setminus \D(z_0,R_0), \quad \text{then } \quad |H_\sigma(w) - z_0| \le  \frac{3}{4}|w-z_0|.
\end{equation}

Let $R >0$. Fix $m_0 \in \N$ such that 
\[
\frac{R}{2^{m_0}} < 1.
\]
By \eqref{eq:to_infty}, there exists $x>0$ such that $H_\sigma^{m_0}$ is defined on $\mathbb H_{>x-R}$ and
\[
\mathbb H_{>x-R}, H_\sigma(\mathbb H_{>x-R}), \ldots, H_\sigma^{m_0}(\mathbb H_{>x-R}) \subset \mathbb H_{>b}.
\]
Hence, for every $z \in \mathbb H_{>x}$, 
\[
\D(z,R), H_\sigma(\D(z,R)), \ldots, H_\sigma^{m_0}(\D(z,R)) \subset \mathbb H_{>b}\subset \mathbb H_{>2a}.
\]
Suppose there exists (the minimal) $m > m_0$ such that 
\[
\D(z,R), H_\sigma(\D(z,R)), \ldots, H_\sigma^{m-1}(\D(z,R)) \subset \mathbb H_{>2a}, \qquad H_\sigma^m(\D(z,R)) \not \subset \mathbb H_{>2a}
\]
(note that the first condition implies that $H_\sigma^m$ is defined on $\D(z,R)$). Then by Lemma~\ref{lem:contract},
\begin{equation}\label{eq:diamHm}
\diam H_\sigma^m(\D(z,R)) \le \frac{2R}{2^m} \le \frac{R}{2^{m_0}}  < 1.
\end{equation}
Take $v \in H_\sigma^m(\D(z,R)) \setminus \mathbb H_{>2a}$. By \eqref{eq:diamHm},
\[
H_\sigma^m(\D(z,R)) \subset \D(v, 1) \subset\mathbb H_{<2a+1}\subset\mathbb H_{<b},
\]
which shows that $m(z)$ exists and is equal to $m$. 

Suppose now
\[
H_\sigma^m(\D(z,R)) \subset \mathbb H_{>2a} \qquad \text{for every } m \ge 0.
\]
In particular, \eqref{eq:diamHm} holds and $H_\sigma^{m-1}(z) \in \mathbb H_{>2a}$ for all $m > m_0$. Note that if $H_\sigma^{m-1}(z)\notin \D(z_0,R_0)$ for every $m >m_0$, then using \eqref{eq:/2} inductively provides $H_\sigma^{m-1}(z) \to z_0$ as $m \to \infty$, which is a contradiction. Hence, $H_\sigma^{m-1}(z) \in \D(z_0, R_0)$ for some $m > m_0$. Then $\Re(H_\sigma^m(z)) < b-1$ by \eqref{eq:w-z0}, which together with \eqref{eq:diamHm} implies 
\[
H_\sigma^m(\D(z,R)) \subset \D(H_\sigma^m(z),1) \subset \mathbb H_{<b}.
\]
Then $m(z)$ can be defined as the minimal integer $m > m_0$ satisfying $H_\sigma^m(\D(z,R))\subset \mathbb H_{<b}$. This ends the proof of the first assertion of the lemma. 

To show the second one, consider $c>0$ and note that since the set $T_\sigma \cap \mathbb H_{\le b+1+c}$ is bounded, we have
\begin{equation}\label{eq:Tcap<b+1}
T_\sigma \cap \mathbb H_{<b+c} \subset \D(z_0, R(c))
\end{equation}
for some $R(c)>0$. Fix an integer $q  = q(c)> 1$ satisfying
\begin{equation}\label{eq:3/4^q<}
\bigg(\frac{3}{4}\bigg)^{q-2} R(c) < R_0.
\end{equation}

Take $R > 0$ and $z_1, z_2 \in \mathbb H_{>x}$ for $x = x(R)$, such that $|z_1 - z_2|< c$. By symmetry, we can assume $m(z_1) \ge m(z_2)$, so it is enough to prove $m(z_1) - m(z_2) < q$. Obviously, we can also suppose $m(z_1) - m(z_2) >2$. By Lemma~\ref{lem:contract} and the definition of $m(z_2)$, 
\[
\Re(H_\sigma^{m(z_2)}(z_1)) \le \Re(H_\sigma^{m(z_2)}(z_2)) + |H_\sigma^{m(z_2)}(z_1) - H_\sigma^{m(z_2)}(z_2)| \le b + \frac{|z_1-z_2|}{2^{m(z_2)}} < b+c, 
\]
so by \eqref{eq:Tcap<b+1},
\[
|H_\sigma^{m(z_2)}(z_1) - z_0|< R(c).
\]
On the other hand, for $m \in [m(z_2), m(z_1) -2]$ we have $H_\sigma^{m}(z_1) \in \mathbb H_{>2a}$ by the definition of $m(z_1)$. Moreover, $H_\sigma^{m}(z_1) \notin \mathbb D(z_0,R_0)$, because otherwise, by \eqref{eq:w-z0} and \eqref{eq:diamHm} we would have
\[
H_\sigma^{m+1}(\D(z_1,R)) \subset \D(H_\sigma^{m+1}(z_1),1) \subset \mathbb H_{<b},
\]
where $m + 1 < m(z_1)$, which contradicts the minimality of $m(z_1)$. Therefore, \eqref{eq:/2} implies
\[
R_0 \le |H_\sigma^{m(z_1)-2}(z_1) - z_0| \le \bigg(\frac{3}{4}\bigg)^{m(z_1) - m(z_2) - 2} R(c),
\]
which together with \eqref{eq:3/4^q<} provides $m(z_1) - m(z_2) < q$, finishing the proof.
\end{proof}

By Lemma~\ref{lem:a-u}, we can choose $R > 0$ so large that
\begin{equation}\label{eq:R>}
R > 2\bigg(\sup_{j, l \in \Z_{\ge \sigma}}\{|\psi_{j,l}(u) - v_{k(\sigma), j, l}| + |v_{k(\sigma), j, l} - v_{k(\sigma), j, l+1}|\} + \diam K \bigg)
\end{equation}
for the rectangle $K$ from Subsection~\ref{subsec:K}. Fix also a small number
\begin{equation}\label{eq:epsilon}
\varepsilon \in \Big(0,\frac 1 4\Big)
\end{equation}
and consider the number $b$ from \eqref{eq:b}.

The following lemma is a basic tool for the construction, showing that one can find an arbitrarily large set $\mathcal J$ of indices $j \in \Z_{\ge \sigma}$, `initial' indices $l = \ell_j\in \Z_{\ge \sigma}$ for $j \in \mathcal J$, and a number $m \in \N$, such that the sequences 
\[
(H_\sigma^m (\D(\psi_{j,l}(u),R)))_{l = \ell_j}^\infty, \qquad j \in \mathcal J
\]
consist of $\varepsilon$-small subsets of $T_\sigma$, where the `initial' ones
\[
H_\sigma^m (\D(\psi_{j,\ell_j}(u),R))
\]
are located in $\mathbb H_{<b}$, which is to the left of the rectangle $K$. 

\begin{lemma}\label{lem:seq}
For every $M \in \N$ one can find a set $\mathcal J \subset \Z_{\ge \sigma}$ of cardinality $M$, numbers $\ell_j \in \Z_{\ge \sigma}$, $j \in \mathcal J$, and a positive integer $m$, such that for every $j \in \mathcal J$ and every $l \in \Z_{\ge \ell_{j}}$,
\begin{enumerate}[$($a$)$]
\item $H_\sigma^m$ is defined on $\D(\psi_{j, l}(u),R)$,
\item $\D(\psi_{j, l}(u),R), H_\sigma(\D(\psi_{j, l}(u),R)), \ldots, H_\sigma^{m-1}(\D(\psi_{j, l}(u),R)) \subset \mathbb H_{>2a}$,
\item $H_\sigma^m(\D(\psi_{j, \ell_{j}}(u),R)) \subset \mathbb H_{<b}$,
\item $\diam H_\sigma^m(\D(\psi_{j, l}(u),R)) < \varepsilon$,
\end{enumerate}
for $R$ from \eqref{eq:R>} and $\varepsilon$ from \eqref{eq:epsilon}.
\end{lemma}
\begin{proof} Fix $M \in \N$. Let $x>0$ be the number fulfilling the assertion of Lemma~\ref{lem:Re<} for $R$ from \eqref{eq:R>}, and for $z \in \mathbb H_{>x}$ consider the number $m(z)$ from Lemma~\ref{lem:Re<}. Note that by \eqref{eq:to_infty}, increasing $x$ if necessary, we can assume $m(z) > m_0$, where
\begin{equation}\label{eq:R<epsilon}
\frac{2R}{2^{m_0}} < \varepsilon
\end{equation}
for $\varepsilon$ from \eqref{eq:epsilon}.

For every $j, l \in \Z_{\ge \sigma}$, we have $\Re(u_{j, l})  = \Re(u_{\sigma, l})\to +\infty$ as $l \to +\infty$. Hence, by \eqref{eq:to_infty}, there exists $\sigma' \in \Z_{\ge \sigma}$, such that $\psi_{j, l}(u) \in \mathbb H_{>x}$ for every $j \in \Z_{\ge \sigma}$, $l \in \Z_{\ge \sigma'}$. Denote
\[
m_{j,l} = m(\psi_{j, l}(u)), \qquad j \in \Z_{\ge \sigma},\; l \in \Z_{\ge \sigma'}.
\]
By Lemma~\ref{lem:Re<} and \eqref{eq:R<epsilon}, the map $H_\sigma^{m_{j,l}}$ is defined on $\D(\psi_{j, l}(u),R)$, with
\begin{align}\label{eq:(a)}
&\D(\psi_{j, l}(u),R), H_\sigma(\D(\psi_{j, l}(u),R)), \ldots, H_\sigma^{m_{j,l}-1}(\D(\psi_{j, l}(u),R)) \subset \mathbb H_{>2a},\\
\label{eq:(b)}
&H_\sigma^{m_{j,l}}(\D(\psi_{j, l}(u),R)) \subset \mathbb H_{<b},\\
\label{eq:(c)}
&\diam H_\sigma^{m_{j,l}}(\D(\psi_{j, l}(u),R)) < \diam H_\sigma^{m_0}(\D(\psi_{j, l}(u),R)) < \varepsilon.                                                                                                                                                                  
\end{align}
Furthermore, Lemma~\ref{lem:a-u} implies that there exists a constant $c>0$ such that 
\[
|\psi_{j, l}(u) - \psi_{j, l+1}(u)| < c.
\]
Consider the number $q = q(c)$ from Lemma~\ref{lem:Re<} and let
\[
\tilde m = \max\{m_{j,\sigma'}: j = \sigma, \ldots, \sigma + qM - 1\}.
\]
By Lemma~\ref{lem:Re<}, for $j \in \{\sigma, \ldots, \sigma + qM - 1\}$, we have $|m_{j, l} - m_{j,l+1}| < q$ for every $l \in \Z_{\ge \sigma'}$. Moreover, as $\Re(u_{j, l}) \to +\infty$ as $l \to +\infty$, the assertion \eqref{eq:to_infty} implies $m_{j,l} \to +\infty$ as $l \to +\infty$. Consequently, one can find $\ell_j \in \Z_{\ge \sigma'}$ such that $m_{j, \ell_j} \in [\tilde m, \tilde m + q - 1]$ and $m_{j,l} \ge m_{j, \ell_j}$ for every $l \ge \ell_j$. As there are $qM$ numbers $m_{j, \ell_j}$, $j \in \{\sigma, \ldots, \sigma + qM - 1\}$, which can attain only $q$ different values, one can find 
a set $\mathcal J \subset \{\sigma, \ldots, \sigma + qM - 1\}$ of cardinality $M$ and an integer $m \in [\tilde m, \tilde m + q - 1]$, such that $m_{j, \ell_j} = m$ for every $j \in \mathcal J$. Consequently,
\[
m_{j, \ell_j} = m \qquad \text{and} \qquad m_{j,l} \ge m \qquad \text{for every } j \in \mathcal J, \; l \in \Z_{\ge \ell_{j}}.
\]
This together with \eqref{eq:(a)}, \eqref{eq:(b)} and \eqref{eq:(c)} provides the assertions of the lemma.
\end{proof}

Now we can define a suitable iterated function system on $K$. Take a large $M \in \N$ and consider the set $\mathcal J$, numbers $\ell_j$, $j \in \mathcal J$ and $m$ from Lemma~\ref{lem:seq} suited to $M$. By \eqref{eq:KinT}, \eqref{eq:R>} and Lemma~\ref{lem:seq}, the inverse branches of $F^{m+3}$, given by
\[
\phi_{j, l}: \mathbb H_{> r-2} \to \C, \qquad \phi_{j, l} = H_\sigma^m \circ \psi_{j, l}, \qquad j \in \mathcal J, \; l \in \Z_{\ge \ell_{j}}
\]
are well-defined. In particular, $\phi_{j, l}(K)$ are well-defined.

Let
\[
\mathcal K = \{(j, l): j \in \mathcal J, \; l \in \Z_{\ge \ell_{j}}, \; \phi_{j, l}(K) \subset K\}.
\]
Note that $\mathcal K$ is a finite set (otherwise, $F^{-(m+3)}(u)$ would have an accumulation point in $K$). Furthermore, Lemma~\ref{lem:distinct} implies the following. 

\begin{lemma}\label{lem:distinct1} The maps 
$\phi_{j, l}$, $(j, l) \in \mathcal K$, are pairwise distinct inverse branches of $F^{m+3}$. In particular, the sets $\phi_{j, l}(K)$, $(j, l) \in \mathcal K$, are pairwise disjoint.
\end{lemma}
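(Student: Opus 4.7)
The goal is to deduce Lemma~\ref{lem:distinct1} from Lemma~\ref{lem:distinct} by exploiting the univalence of $\hatH^m$ together with the identity principle. My first step will be to unfold the definition
\[
\phi_{\sjeden,\sdwa} = \hatH^m \circ H_{k(\hats),\sjeden} \circ H_{k(\sjeden),\sdwa} \circ H_{k(\sdwa),\hats},
\]
and to check that this is a composition of $m+3$ univalent inverse branches of $F$. Using \eqref{eq:KinT} together with Lemma~\ref{lem:seq}(a), every partial composition sends a neighbourhood of $K$ into the domain $\mathbb H_{>a}$ on which the next inverse branch is defined. Consequently $\phi_{\sjeden,\sdwa}$ is a well-defined univalent inverse branch of $F^{m+3}$ on a connected open neighbourhood of $K$; this takes care of the ``inverse branches of $F^{m+3}$'' part of the claim.

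For the pairwise distinctness of the $\phi_{\sjeden,\sdwa}$, the idea is to left-cancel $\hatH^m$. Precisely, if $\phi_{\sjeden,\sdwa} = \phi_{\sjeden',\sdwa'}$, then since $\hatH^m$ is univalent, applying $F^m$ on the left (which inverts $\hatH^m$ on its image) yields $\psi_{\sjeden,\sdwa} = \psi_{\sjeden',\sdwa'}$, and Lemma~\ref{lem:distinct} then forces $(\sjeden,\sdwa) = (\sjeden',\sdwa')$.

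For the disjointness of the images, I plan to use a standard inverse-branch argument. Suppose $\phi_{\sjeden,\sdwa}(K) \cap \phi_{\sjeden',\sdwa'}(K)$ contains a point $w = \phi_{\sjeden,\sdwa}(z_1) = \phi_{\sjeden',\sdwa'}(z_2)$ with $z_1,z_2 \in K$. Applying $F^{m+3}$ to both sides gives $z_1 = z_2$, so $\phi_{\sjeden,\sdwa}$ and $\phi_{\sjeden',\sdwa'}$ agree at $z_1$. Since both are holomorphic on a connected common domain containing $K$, the identity principle forces $\phi_{\sjeden,\sdwa} \equiv \phi_{\sjeden',\sdwa'}$, and the distinctness step just proved then gives $(\sjeden,\sdwa) = (\sjeden',\sdwa')$.

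I do not anticipate a substantive obstacle: the lemma is essentially bookkeeping on top of Lemma~\ref{lem:distinct}, the univalence of the $H$-branches, and the identity principle. The one small point that wants attention is verifying that $\phi_{\sjeden,\sdwa}$ really does extend to a connected open set containing $K$, so that the identity principle applies; this is exactly what Lemma~\ref{lem:seq}(a) combined with \eqref{eq:KinT} supplies.
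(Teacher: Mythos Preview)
Your proposal is correct and follows exactly the approach the paper intends: the paper's ``proof'' is simply the one-line remark that Lemma~\ref{lem:distinct} implies Lemma~\ref{lem:distinct1}, and you have spelled out precisely the routine details behind that implication (left-cancelling the univalent $\hatH^m$ to reduce to $\psi_{\sjeden,\sdwa}$, then the standard identity-principle argument for disjoint images of distinct inverse branches on a connected domain).
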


Moreover, by Lemmas~\ref{lem:contract} and~\ref{lem:seq} together with \eqref{eq:R>},
\[
|\phi_{j, l}'| < \frac 1 {2^{m+3}} < 1 \qquad \text{for }\; (j, l) \in \mathcal K.
\]
Hence, $\{\phi_{j, l}|_K\}_{(j, l) \in \mathcal K}$ is an iterated function system of conformal contractions on $K$, with the attractor
\[
\Lambda = \bigcap_{n=1}^\infty \bigcup_{(j_1, l_1), \ldots, (j_n, l_n) \in \mathcal K} \phi_{j_n, l_n} \circ \cdots \circ \phi_{j_1, l_1} (K).
\]
In particular, $\Lambda$ is a compact set, such that $F^{m+3}(\Lambda) = \Lambda$. Hence, $\Lambda$ is an expanding conformal repeller for the map $F^{m+3}$.

\section{Estimate of pressure and proof of Theorem~A}\label{sec:est_der}

In this section we show that the Hausdorff dimension of the repeller $\Lambda$ is larger than~$1$, using the tools of thermodynamic formalism. To this end, one should estimate moduli of the derivatives of the branches $\phi_{j, l}$ on $K$. This is done in the two following lemmas. Set
\[
w_{j, l} = H_\sigma^m(v_{k(\sigma), j, l}), \qquad j \in \mathcal J, \; l \in \Z_{\ge \ell_{j}}
\]
and note that by \eqref{eq:R>} and Lemmas~\ref{lem:k(s)} and~\ref{lem:seq}, the points $w_{j, l}$ are well-defined and
\[
w_{j, l} = v_{\scriptsize k(\sigma), \underbrace{\sigma, \ldots, \sigma}_{m \text{ times}}, j, l}.
\]

\begin{lemma}\label{lem:der-phi} There exists $c > 0$, which does not depend on $M$, such that 
\[
|\phi_{j, l}'(u)| \ge c |w_{j, l} - w_{j, l+1}|, \qquad j \in \mathcal J, \; l \in \Z_{\ge \ell_{j}}.
\]
\end{lemma}
\begin{proof} 
By the chain rule and Lemma~\ref{lem:a-u}, 
\begin{equation}\label{eq:der-phi}
|\phi_{j, l}'(u)| = |\psi_{j, l}'(u)|\, |(H_\sigma^m)'(\psi_{j, l}(u))|\ge c_1 |v_{k(\sigma), j, l} - v_{k(\sigma), j, l+1}|\,|(H_\sigma^m)'(\psi_{j, l}(u))|
\end{equation}
for some constant $c_1 > 0$. By Lemma~\ref{lem:seq}, the map $H_\sigma^m$ is defined on $\D(\psi_{j, l}(u),R)$. Hence, by Koebe's distortion theorem, there exists $c_2 > 0$, such that 
\[
\frac{1}{c_2} \le \frac{|(H_\sigma^m)'(z_1)|}{|(H_\sigma^m)'(z_2)|} \le c_2 \qquad \text{for }\; z_1, z_2 \in \D\Big(\psi_{j, l}(u),\frac R 2\Big).
\]
Moreover, $v_{k(\sigma), j, l}, v_{k(\sigma), j, l+1} \in \D(\psi_{j, l}(u),R/2)$ by \eqref{eq:R>}. Consequently, 
\begin{align*}
|(H_\sigma^m)'(\psi_{j, l}(u))| &\ge \frac{1}{c_2} |(H_\sigma^m)'(v_{k(\sigma), j, l})|\\ &\ge \frac{1}{c_2^2} \frac{|H_\sigma^m(v_{k(\sigma), j, l})- H_\sigma^m(v_{k(\sigma), j, l+1})|}{|v_{k(\sigma), j, l}- v_{k(\sigma), j, l+1}|}\\ &= \frac{1}{c_2^2} \frac{|w_{j, l} - w_{j, l+1}|}{|v_{k(\sigma), j, l}- v_{k(\sigma), j, l+1}|}.
\end{align*}
This together with \eqref{eq:der-phi} implies
\[
|\phi_{j, l}'(u)| \ge \frac{c_1}{c_2^2} |w_{j, l} - w_{j, l+1}|.
\]
As $c_1, c_2$ do not depend on $M$, the proof is finished.
\end{proof}

\begin{lemma}\label{lem:sum-v}
For every $j \in \mathcal J$,
\[
\sum_{l: (j, l) \in \mathcal K} |w_{j, l} - w_{j, l+1}| > 1.
\]
\end{lemma}
\begin{proof} By \eqref{eq:R>}, for every $j \in \mathcal J$ we have $v_{k(\sigma), j, \ell_{j}} \in \D(\psi_{j,\ell_{j}}(u),R)$, so by Lemma~\ref{lem:seq} together with \eqref{eq:r}, 
\[
\Re(w_{j, \ell_{j}}) = \Re(H_\sigma^m(v_{k(\sigma), j, \ell_{j}})) < b < r-1.
\]
Since by \eqref{eq:to_infty}, the sequence $(\Re(w_{j, l}))_{l = \ell_{j}}^{\infty}$ tends to $+\infty$ as $l \to \infty$, there holds
\[
\Re(w_{j, l}) > r+1 \qquad \text{for sufficiently large } l \in \Z.
\]
Moreover, $v_{k(\sigma), j, l}, v_{k(\sigma), j, l+1} \in \D(\psi_{j,l}(u),R)$ by \eqref{eq:R>}, so Lemma~\ref{lem:seq} implies
\[
|w_{j, l} - w_{j, l+1}| < \varepsilon, \qquad l \in \Z_{\ge \ell_{j}}.
\]
Note also that 
\[
w_{j, l} \in T_\sigma.
\]
These facts together with \eqref{eq:throughK} and \eqref{eq:epsilon} imply that 
there exist $\ell^-_{j}, \ell^+_{j}\in \Z$ such that $\ell_{j} \le \ell^-_{j} \le \ell^+_{j}$ and 
\begin{align*}
\Re(w_{j, \ell^-_{j}}) &\in [r - 1 + \varepsilon, r - 1  + 2\varepsilon),\\
\Re(w_{j, l}) &\in [r - 1 + \varepsilon, r + 1  -\varepsilon] \qquad \text{for } \; l \in \{\ell^-_{j}, \ldots, \ell^+_{j}\},\\
\Re(w_{j, \ell^+_{j}}) &\in (r + 1 -2 \varepsilon, r + 1 - \varepsilon]
\end{align*}
(see Figure~\ref{fig:K-phi}).

\begin{figure}[ht!]
\centerline{\includegraphics[width=0.5\textwidth]{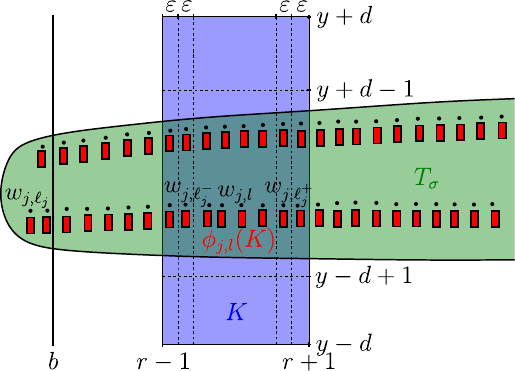}}
\caption{The points $w_{j, \ell_{j}}, w_{j, \ell^-_{j}}, w_{j, \ell^+_{j}}$.}\label{fig:K-phi}
\end{figure}
Furthermore, $\psi_{j,l}(K) \subset \D(\psi_{j,l}(u),R)$ by \eqref{eq:KinT}, \eqref{eq:R>} and Lemma~\ref{lem:contract}, so Lemma~\ref{lem:seq} implies $\phi_{j,l}(K) \subset \D(w_{j,l}, \varepsilon)$, which together with \eqref{eq:throughK} gives
\[
(j,l) \in \mathcal K \quad \text{for } l \in \{\ell^-_{j}, \ldots, \ell^+_{j}\}.
\]
Consequently, using \eqref{eq:epsilon}, we obtain
\[
\sum_{l: (j, l) \in \mathcal K} |w_{j, l} - w_{j, l+1}| \ge \sum_{l = \ell^-_{j}}^{\ell^+_{j}-1} |w_{j, l} - w_{j, l+1}| \ge |w_{j, \ell^-_{j}} - w_{j, \ell^+_{j}}|\\
\ge \Re(w_{j, \ell^+_{j}}) - \Re(w_{j, \ell^-_{j}}) > 2 - 4\varepsilon > 1.
\]
\end{proof}

Now we are ready to prove Theorem~A.

\begin{proof}[Proof of Theorem~\rm A]
By Lemma~\ref{lem:distinct1}, the topological pressure function for the map $F^{m+3}$ on the conformal repeller $\Lambda$ (see Subsection~\ref{subsec:bowen}) is equal to
\[
P(F^{m+3}|_{\Lambda}, t) = \lim_{n\to \infty} \frac 1 n \ln \sum_{(j_1, l_1), \ldots, (j_n, l_n) \in \mathcal K} |(\phi_{j_n, l_n} \circ \cdots \circ \phi_{j_1, l_1})'(w)|^t, \qquad t>0
\]
for $w \in \Lambda$. Obviously,
\[
P(F^{m+3}|_{\Lambda}, 1) \ge \ln \sum_{(j, l) \in \mathcal K} \inf_\Lambda|\phi_{j, l}'|.
\]
Since the maps $\phi_{j,l}$ are defined on $\mathbb H_{>r-2}$, while $K \subset \mathbb H_{\ge r-1}$, Koebe's distortion theorem implies
\[
\sum_{(j, l) \in \mathcal K} \inf_\Lambda|\phi_{j, l}'| > c_1 \sum_{(j, l) \in \mathcal K} |\phi_{j, l}'(u)|
\]
for some constant $c_1>0$. By Lemmas~\ref{lem:der-phi} and~\ref{lem:sum-v},
\[
\sum_{(j, l) \in \mathcal K} |\phi_{j, l}'(u)|  \ge c_2 \sum_{(j, l) \in \mathcal K} |w_{j, l} - w_{j, l+1}| \ge c_2\sum_{j \in \mathcal J}\; \sum_{l: (j, l) \in \mathcal K} |w_{j, l} - w_{j, l+1}| > c_2M,
\]
for some constant $c_2>0$. Noting that $M$ can be chosen to be arbitrarily large, while $c_1, c_2 > 0$ do not depend on $M$, we conclude that for sufficiently large $M$,  
\[
P(F^{m+3}|_{\Lambda}, 1) > \ln (c_1c_2M) > 0.
\]
Since $t \mapsto P(F^{m+3}|_{\Lambda}, t)$ is continuous, the unique zero of the pressure function is larger than~$1$. Hence, Bowen's formula implies that the Hausdorff dimension of $\Lambda$ is larger than~$1$. 

Let
\[
X = \bigcup_{n=0}^{m+2} \exp(F^n(\Lambda)) = \bigcup_{n=0}^{m+2}f^n(\exp(\Lambda)).
\]
It is immediate that $X$ is an expanding conformal repeller for the map $f$. Furthermore, $\dim_H X = \dim_H \Lambda > 1$, as the Hausdorff dimension is preserved by non-constant holomorphic maps.

To end the proof of Theorem~A, it is sufficient to show $X \subset \bd U$. Since $f(\bd U) \subset \bd U$, it is enough to prove
\begin{equation}\label{eq:LambdainU}
\exp(\Lambda) \subset \bd U.
\end{equation}

To show \eqref{eq:LambdainU}, take $z \in \Lambda$ and $n \ge 2$. Then 
\[
z = \phi_{j_n, l_n} \circ \cdots \circ \phi_{j_1, l_1}(z_n)
\]
for some $z_n \in K$ and $(j_1, l_1), \ldots, (j_n, l_n) \in \mathcal K$, 
so by Lemmas~\ref{lem:contract} and~\ref{lem:seq} together with \eqref{eq:KinT} and \eqref{eq:R>},
\begin{equation}\label{eq:z-phi<}
|z - \phi_{j_n, l_n} \circ \cdots \circ \phi_{j_1, l_1}(u)|= |\phi_{j_n, l_n} \circ \cdots \circ \phi_{j_1, l_1}(z_n) - \phi_{j_n, l_n} \circ \cdots \circ \phi_{j_1, l_1}(u)|< \frac{\diam K}{2^{(m+3)n}}.
\end{equation}
Let
\[
v^{(n)} = \phi_{j_n, l_n} \circ \cdots \circ \phi_{j_2, l_2}\circ H_\sigma^m (v_{k(\sigma), j_1, l_1}).
\]
By Lemma~\ref{lem:k(s)} and \eqref{eq:vinU},
\begin{equation}\label{eq:v^n}
v^{(n)} = v_{k(\sigma), \underline{\sigma}_m, j_n, l_n, \underline{\sigma}_{m+1}, j_{n-1}, l_{n-1}, \ldots, \underline{\sigma}_{m+1}, j_1, l_1} \in V \subset \exp^{-1}(U),
\end{equation}
where we write $\underline{\sigma}_q$ for $\underbrace{\sigma, \ldots, \sigma}_{q \text{ times}}$, $q = m, m+1$. 
Hence, by Lemmas~\ref{lem:contract}, \ref{lem:a-u} and~\ref{lem:seq} together with \eqref{eq:KinT} and \eqref{eq:R>},
\begin{equation}\label{eq:phi-v<}
\begin{aligned}
&|\phi_{j_n, l_n} \circ \cdots \circ \phi_{j_1, l_1}(u) - v^{(n)}|\\&= |\phi_{j_n, l_n} \circ \cdots \circ \phi_{j_2, l_2}\circ H_\sigma^m (\psi_{j_1, l_1}(u)) - \phi_{j_n, l_n} \circ \cdots \circ \phi_{j_2, l_2}\circ H_\sigma^m (v_{k(\sigma), j_1, l_1})|\\
&< \frac{c}{2^{(m+3)n-3}}
\end{aligned}
\end{equation}
for some constant $c > 0$. Using \eqref{eq:z-phi<} and \eqref{eq:phi-v<}, we obtain
\begin{equation}\label{eq:z-v<}
|z - v^{(n)}|< \frac{8c + \diam K}{2^{(m+3)n}},
\end{equation}
where the right hand side of the inequality tends to $0$ as $n \to \infty$, so $e^{v^{(n)}} \to e^z$. By \eqref{eq:v^n}, we have $e^{v^{(n)}} \in U$, so $e^z \in \overline{U}$. 
As $z \in \Lambda$, there holds $|(F^{(m+3)n})'(z)| \to \infty$ for $n \to \infty$, so $|(f^{(m+3)n})'(e^z)| \to \infty$ for $n \to \infty$, which shows $e^z \in J(f)$. Consequently, $e^z \in \bd U$. This proves \eqref{eq:LambdainU}, which implies that the hyperbolic dimension of $\bd U$ is larger than~$1$ and ends the proof of Theorem~A. 
\end{proof}

Finally, to prove Remark~\ref{rem:event_hyp}, it is enough to notice that $\Lambda \subset \mathbb H_{>a}$, where $a$ can be chosen to be arbitrarily large.

\section{Proof of Theorem~B}\label{sec:parab}

Let $U$ be an immediate component of the basin of a $p$-periodic parabolic point $\zeta$, satisfying the conditions of Theorem~B. In this section we indicate how to prove Theorem~B by adapting the arguments from Sections~\ref{sec:top struct}--\ref{sec:est_der} to the parabolic case. We only sketch the proof, highlighting the differences compared to the attracting case and leaving the details to the reader.

As previously, we can assume $p=1$. First, we prove a parabolic analogue of Lemma~\ref{lem:D}. 

\begin{lemma}\label{lem:D-parab} There exists a simply connected domain $D \subset U$ with a Jordan boundary, such that $\zeta \in \bd U$, $\overline{D} \setminus \{\zeta\} \subset U$, $\overline{\Sing(f|_U)} \subset D$ and $\overline{f(D)} \setminus \{\zeta\} \subset D$. Furthermore, $E = f^{-1}(D) \cap U$ is an unbounded simply connected domain containing $\overline D\setminus \{\zeta\}$, such that the components of $\bd E \cap (U \cup f^{-1}(\zeta))$ are homeomorphic to the real line, and are mapped by $f$ onto $\bd D$ as infinite degree coverings. Every component $\mathcal Y$ of $U \setminus \overline{E}$ is a simply connected domain, mapped univalently by $f$ onto $U \setminus \overline{D}$, such that $\bd\mathcal Y \cap U\subset\bd E$. 
\end{lemma}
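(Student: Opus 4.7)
The plan is to adapt the proof of Lemma~\ref{lem:D} to the parabolic setting, replacing the hyperbolic disc around the attracting fixed point by an appropriate domain built from an attracting petal anchored at the parabolic point $\zeta$. The hypothesis $\zeta \notin \overline{\Sing(f)}$ will ensure the existence of a petal at $\zeta$ disjoint from the singular set, on which $f$ acts as a translation via the Fatou coordinate. Specifically, I would invoke classical parabolic theory (see \cite{milnor06}) to obtain a Fatou coordinate $\varphi \colon P^* \to \mathbb H_{>R_0}$ conjugating $f$ on an attracting petal $P^* \subset U$ (with $\zeta \in \bd P^*$) to the translation $z\mapsto z+1$. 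Setting $P = \varphi^{-1}(\mathbb H_{>R_1})$ for $R_1$ large and generic then produces a simply connected petal with analytic Jordan boundary through $\zeta$, satisfying $\overline P \cap \overline{\Sing(f)} = \emptyset$ and $\overline{f(P)} \setminus \{\zeta\} \subset P$. To enlarge $P$ so as to contain $\overline{\Sing(f|_U)}$, I would iteratively define $D_0 = P$ and $D_{n+1}$ as the component of $f^{-1}(D_n)\cap U$ containing $D_n$; since $f^n(z)\to\zeta$ locally uniformly on $U$, any compact connected set $K \subset U$ joining $\overline{\Sing(f|_U)}$ to a reference point of $P$ satisfies $K \subset D_n$ for all sufficiently large $n$. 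Choosing such an $N$ with additionally $\overline{D_{N-1}}\setminus\{\zeta\} \subset D_N$ and setting $D = D_N$ yields $\overline{f(D)}\setminus\{\zeta\} \subset D$; simple-connectedness of $D$ would follow inductively from Proposition~\ref{prop:swiss}.

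With $D$ in hand, setting $E = f^{-1}(D) \cap U$ gives $\overline D \setminus \{\zeta\} \subset E$ from $\overline{f(D)} \setminus \{\zeta\} \subset D$, and $E$ is unbounded and simply connected by arguments analogous to those in the proof of Lemma~\ref{lem:D}. The crucial new point in the parabolic case is that every component $\mathcal Y$ of $U\setminus\overline E$ is mapped by $f$ \emph{univalently} onto $U\setminus\overline D$, rather than as an infinite-degree covering as in the attracting case. To establish this, I would apply Proposition~\ref{prop:swiss} to $\mathcal Y$, viewed as a component of $f^{-1}(U\setminus\overline D)\cap U$: since $U\setminus\overline D$ is simply connected and disjoint from $\Sing(f|_U)$, the infinite-degree alternative would require either an asymptotic curve in $\mathcal Y$ with asymptotic value in $U\setminus\overline D$, or infinitely many critical points of $f$ in $\mathcal Y$ whose critical values would lie in $f(\mathcal Y)\subset U\setminus\overline D$, both being incompatible with $\Sing(f|_U)\subset D$. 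Hence $f|_\mathcal Y$ is proper with finite degree, and since its target is simply connected and free of critical values, the degree is forced to be $1$. The infinite degree of $f|_U$ is then realised by the existence of infinitely many such components, while the components of $\bd E \cap (U \cup f^{-1}(\zeta))$ (which are curves separating adjacent $\mathcal Y$'s, terminating at points of $f^{-1}(\zeta)$ on $\bd U$) are each mapped onto the Jordan curve $\bd D$ as an infinite-degree covering.

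The main obstacle I expect is to ensure $\bd D$ is a genuine Jordan curve through $\zeta$, since iterated pullbacks of the analytic arc $\bd P$ can develop self-intersections whenever a critical point of some iterate $f^n$ with $n\le N$ lies on $\bd P$. I would circumvent this by choosing $R_1$ generically: for all but countably many values of $R_1$, the analytic curve $\bd P$ avoids the countable union of critical orbits of $f|_U$ up to time $N$, so each $\bd D_n$ remains a disjoint union of Jordan curves, and $\bd D = \bd D_N$ is itself a Jordan curve. A secondary subtlety is verifying simple-connectedness of the intermediate $D_n$ once they begin to contain critical values of $f$; this can be handled using the Fatou-coordinate structure in the petal region together with a Riemann--Hurwitz computation, exploiting the fact that each $D_{n+1}$ is realised as a proper finite-degree branched cover of the previously constructed simply connected domain.
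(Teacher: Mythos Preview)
Your construction of $D$ is genuinely different from the paper's, and the step ``choosing such an $N$ with additionally $\overline{D_{N-1}}\setminus\{\zeta\}\subset D_N$'' hides a real gap. The paper does \emph{not} build $D$ by pulling a petal back under $f$. Instead it fixes $z_0\in U$, sets $D_n=\{z\in U:\rho_U(z,f^n(z_0))<r\}$ for a large hyperbolic radius $r$, and lets $D$ be the interior of $\overline{\bigcup_n D_n}$ with the complementary components contained in $U$ filled in. Each $D_n$ is a hyperbolic ball of fixed radius, hence compactly contained in $U$; since the centres $f^n(z_0)$ converge to $\zeta$, the union accumulates on $\partial U$ only at $\zeta$, so $\overline D\setminus\{\zeta\}\subset U$ is automatic, and Schwarz--Pick gives the forward invariance.

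In your pullback scheme, by contrast, nothing prevents $\overline{D_N}$ from meeting $\partial U$ at points other than $\zeta$. If $z'\in f^{-1}(\zeta)\cap\partial U$ with $z'\neq\zeta$, then near $z'$ the set $f^{-1}(D_{N-1})$ contains a local ``petal'' at $z'$; once $D_{N-1}$ is large enough there is no reason this local piece cannot lie in the \emph{same} component of $f^{-1}(D_{N-1})$ as $D_{N-1}$ itself, in which case $z'\in\overline{D_N}\cap\partial U$ and the inclusion $\overline{D_{N-1}}\setminus\{\zeta\}\subset D_N$ fails from then on. A second, related difficulty is that as soon as $D_{n-1}$ contains an asymptotic value of $f|_U$ (which it must, since you want it eventually to contain $\overline{\Sing(f|_U)}$), the component $D_n$ may be unbounded, and the Jordan-boundary property is lost. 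Your genericity argument for $R_1$ does not address either issue. (On the other hand, your worry about simple connectedness of the $D_n$ is unfounded: a component of the preimage of a simply connected domain under an entire map is always simply connected, so no Riemann--Hurwitz bookkeeping is needed there.)

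For the second half your line of argument is slightly different from the paper's but essentially sound. The paper does not invoke Proposition~\ref{prop:swiss} on the components $\mathcal Y$; instead it analyses the components of $f^{-1}(\partial D\setminus\{\zeta\})\cap U$ directly, using the hypothesis $\zeta\notin\overline{\Sing(f)}$ to show that each such arc lands at two \emph{distinct}, non-critical preimages of $\zeta$, and then reads off the structure of $\partial E$ and of the $\mathcal Y$'s from the resulting chain of arcs. Your sketch of why the components of $\partial E\cap(U\cup f^{-1}(\zeta))$ cover $\partial D$ with \emph{infinite} degree is where this landing behaviour is actually needed, and is the place where $\zeta\notin\overline{\Sing(f)}$ enters; that deserves more than the one sentence you give it.
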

\begin{proof} Let
\[
D_n = \{z \in U: \rho_U(z, f^{n}(z_0)) < r\}, \qquad n \ge 0,
\]
for a fixed $z_0 \in U$ and a large $r > 0$, where $\rho_U$ is the hyperbolic metric on $U$. Then $D_n$ is a simply connected domain with analytic Jordan boundary. As by assumption, $\overline{\Sing(f|_U)}$ is a compact subset of $U$, we have $\overline{\Sing(f|_U)} \subset D_0$ if $r$ was chosen sufficiently large. Increasing $r$, we can also assume $f(z_0) \in D_0$, which implies that $\bigcup_{n=0}^\infty D_n$ is connected. Moreover, $\overline{D_n} \subset D_{n+1}$ by the Schwarz--Pick lemma. 

Let $D$ be the interior of the union of the set $\overline{\bigcup_{n=0}^\infty D_n}$ and all components of its complement, which are contained in $U$.
Then $D$ is a simply connected domain with a Jordan boundary, such that $\zeta \in \bd U$, $\overline{D} \setminus \{\zeta\} \subset U$, $\overline{\Sing(f|_U)} \subset D$ and $\overline{f(D)} \setminus \{\zeta\} \subset D$. Repeating the arguments of the proof in the attracting case, we show that $E = f^{-1}(D) \cap U$ is an unbounded simply connected domain containing $\overline D\setminus \{\zeta\}$.
See Figure~\ref{fig:D-parab}.

\begin{figure}[ht!]
\centerline{\includegraphics[width=0.95\textwidth]{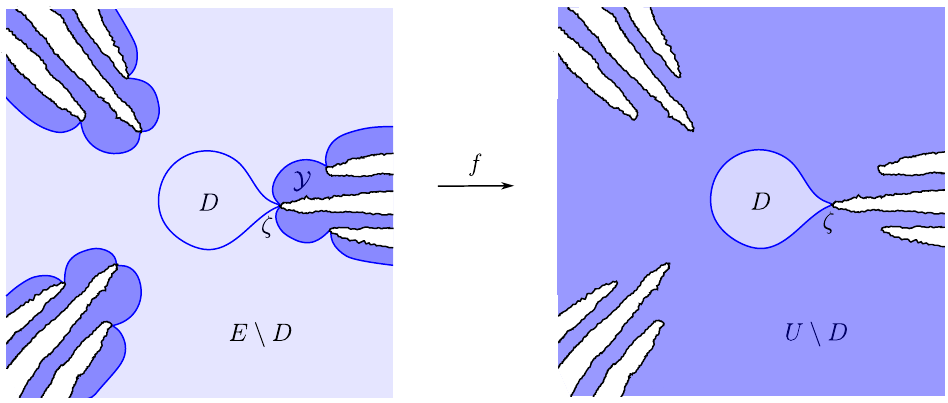}}
\caption{The sets $D$ and $E$ in the parabolic case.}\label{fig:D-parab}
\end{figure}

As a neighbourhood of $\bd D \setminus \{\zeta\}$ is disjoint from $\overline{\Sing(f|_U)}$, each component of $f^{-1}(\bd D \setminus \{\zeta\}) \cap U$ is a curve homeomorphic to $(0,1)$, mapped by $f$ homeomorphically onto $\bd D \setminus \{\zeta\}$, with each of its two `ends' converging to a preimage of $\zeta$ under $f$, or to infinity. Since we assume $\zeta \notin \overline{\Sing(f)}$, in fact each of the two `ends' converges to a different preimage of $\zeta$, and this preimage is not a critical point of $f$. This easily implies the remaining assertions of the lemma.
\end{proof}

As previously, we can assume $\zeta = 0$. Then by Lemma~\ref{lem:D-parab}, $\Gamma = \exp^{-1}(\bd D)$ is a curve homeomorphic to the real line, such that $\Gamma = \Gamma + 2\pi i $. The set 
\[
V = \exp^{-1}(U \setminus \overline D) = \bigcup_{s \in \Z}V^{(s)}
\]
has now infinitely many connected components $V^{(s)}$, $s \in \Z$, such that $V^{(s)} = V^{(0)} + 2 \pi i s$, and 
\[
\bd V^{(s)} \cap \bd V^{(s+1)} = \{a_s\}
\]
for a point $a_s \in \exp^{-1}(\zeta) \subset \Gamma$ with $a_s =  a_0 + 2\pi is$. See Figure~\ref{fig:G-parab}.

Lemma~\ref{lem:D-parab} implies that the map $f$ on $U \setminus E$
can be lifted by $\exp$ to a map 
\[
\Phi\colon \exp^{-1}(U \setminus E) \to \exp^{-1}(U \setminus D),
\]
such that $\Phi(z+2\pi i) = \Phi(z)$ and $\Phi(a_0) = a_0$. Fix $\omega$ to be a component of $\bd E$. By Lemma~\ref{lem:D-parab}, the components of $\exp^{-1}(U \setminus \overline{E})$, whose boundaries intersect $\exp^{-1}(\omega)$, can be denoted by $V^{(q)}_s$, $s,q \in \Z$, such that $V^{(q)}_s$ is a simply connected domain contained in $V$,
\[
V^{(q)}_s = V^{(q)}_0 + 2\pi i s
\]
and $V^{(q)}_s$ is mapped univalently by $\Phi$ onto $V^{(q)}$. Furthermore, the components of $\exp^{-1}(\omega)$ can be labeled as $\Gamma_s$, $s \in \Z$, where $\Gamma_s$ is a curve homeomorphic to the real line, with unbounded positive real part,
\[
\Gamma_s = \exp^{-1}(\omega) \cap \bd \bigg(\bigcup_{q\in \Z} V^{(q)}_s\bigg),
\]
$\Gamma_s = \Gamma_0 + 2\pi i s$, and $\Gamma_s$ is mapped homeomorphically by $\Phi$ onto $\Gamma$. A difference compared to the attracting case is that 
\[
\Gamma_s \cap \Gamma = \{a_s\},
\]
which means that the curves $\gamma_s$ from Section~\ref{sec:top struct} are now degenerated to the points $a_s$. See Figure~\ref{fig:G-parab}.
\begin{figure}[ht!]
\centerline{\includegraphics[width=0.65\textwidth]{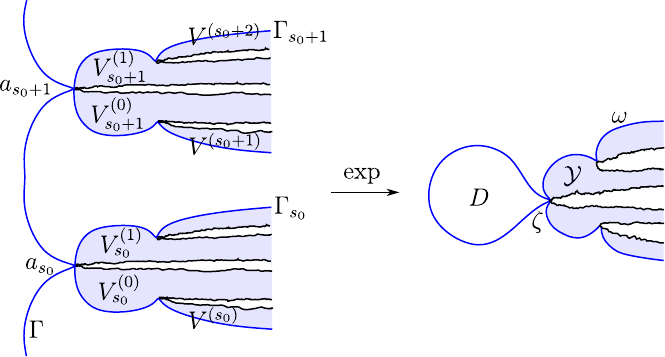}}
\caption{The domains $V^{(s)}$, $V^{(s)}_q$, points $a_s$ and curves $\Gamma$, $\Gamma_s$ in the parabolic case.}\label{fig:G-parab}
\end{figure}

Define
\[
G_s = (\Phi|_{\bigcup_{q\in \Z} V^{(q)}_s \cup \Gamma_s})^{-1} \colon V \cup \Gamma \to \bigcup_{q\in \Z} V^{(q)}_s \cup \Gamma_s, \qquad s,q \in \Z. 
\]
Then
\[
G_s = G_0 + 2\pi is
\]
and $G_s$ is a homeomorphism, univalent on each domain $V^{(q)}$. 

For $s_0, \ldots, s_n \in \Z$, $n \ge 1$, define
\[
V_{s_0, \ldots, s_n} = G_{s_0} \circ \cdots \circ G_{s_n}(V), \; \Gamma_{s_0, \ldots, s_n} = G_{s_0} \circ \cdots \circ G_{s_n}(\Gamma), \; a_{s_0, \ldots, s_n} = G_{s_0} \circ \cdots \circ G_{s_{n-1}}(a_{s_n}).
\]
The following analogue of Lemma~\ref{lem:GammainL} holds.

\begin{lemma}\label{lem:GammainL-parab} 
For every $s_0 \in \Z$,
\[
\Gamma_{s_0} \cup 
\bigcup_{k=1}^\infty\bigcup_{s_1, \ldots, s_k \in \Z} \Gamma_{s_0, \ldots, s_k}
\]
is a connected subset of $V \cup \bigcup_{k = 0}^\infty \{a_{s_0, \ldots, s_k}: s_0, \ldots, s_k \in \Z\}$. 

For every $s_0, \ldots, s_n \in \Z$, $n \ge 1$,
\[
\Gamma_{s_0, \ldots, s_n} \cup \bigcup_{k=1}^\infty\bigcup_{s_{n+1}, \ldots, s_{n+k} \in \Z} \Gamma_{s_0, \ldots, s_{n+k}}
\]
is a connected subset of $V_{s_0,\ldots, s_{n-1}} \cup \bigcup_{k = 0}^\infty \{a_{s_0, \ldots, s_{n+k}}: s_n, \ldots, s_{n+k} \in \Z\}$.
\end{lemma}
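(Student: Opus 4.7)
The plan is to mirror the proof of Lemma~\ref{lem:GammainL}, observing that the connecting arcs $\gamma_{s_0,\ldots,s_k}$ used in the attracting case are now degenerated to the single points $a_{s_0,\ldots,s_k}$. The key structural facts, guaranteed by Lemma~\ref{lem:D-parab} and the construction of $\Gamma_s$, are that $\Gamma_s\cap\Gamma=\{a_s\}$, $\Gamma_s\setminus\{a_s\}\subset V$, and the branches $G_s$ are homeomorphisms from $V\cup\Gamma$ onto $\bigcup_q V^{(q)}_s\cup\Gamma_s$ sending $\Gamma$ onto $\Gamma_s$ and $V$ into $V$. So each $\Gamma_s$ already attaches to $\Gamma$ through the single bridging point $a_s$, removing the need for any auxiliary arc.

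First I would establish the one-step attachment property
$$\Gamma_{s_0,\ldots,s_k}\cap\Gamma_{s_0,\ldots,s_{k-1}}=\{a_{s_0,\ldots,s_k}\},\qquad \Gamma_{s_0,\ldots,s_k}\setminus\{a_{s_0,\ldots,s_k}\}\subset V_{s_0,\ldots,s_{k-1}},$$
for all $k\ge 1$. This follows by applying the homeomorphism $G_{s_0}\circ\cdots\circ G_{s_{k-1}}$ to the base identity $\Gamma_{s_k}\cap\Gamma=\{a_{s_k}\}$, using $G_{s_0}\circ\cdots\circ G_{s_{k-1}}(a_{s_k})=a_{s_0,\ldots,s_k}$ together with the definitions of $\Gamma_{s_0,\ldots,s_k}$, $\Gamma_{s_0,\ldots,s_{k-1}}$, $V_{s_0,\ldots,s_{k-1}}$, and the inclusion $V_{s_0,\ldots,s_{k-1}}\subset V$.

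With these identities in hand, both assertions reduce to a tree-like connectedness argument. For the first assertion, the union consists of the trunk $\Gamma_{s_0}$ with countably many branches $\Gamma_{s_0,s_1}$ attached at the points $a_{s_0,s_1}\in\Gamma_{s_0}$, each of which carries further branches $\Gamma_{s_0,s_1,s_2}$ attached at $a_{s_0,s_1,s_2}\in\Gamma_{s_0,s_1}$, and so on; connectedness then follows by a simple induction on the depth. The containment in $V\cup\bigcup_{k\ge 0}\{a_{s_0,\ldots,s_k}\}$ is immediate from the displayed identities, since $V_{s_0,\ldots,s_{k-1}}\subset V$ and the only non-$V$ points appearing on each $\Gamma_{s_0,\ldots,s_k}$ are the vertices $a_{s_0,\ldots,s_k}$. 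The second assertion is obtained either by repeating the same argument starting at depth $n$, or, more directly, by applying $G_{s_0}\circ\cdots\circ G_{s_{n-1}}$ to the first assertion (with $s_n$ playing the role of $s_0$) and using that this map sends $V$ into $V_{s_0,\ldots,s_{n-1}}$.

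The only genuine obstacle is the bookkeeping: checking that curves $\Gamma_{s_0,\ldots,s_k}$ of different depths meet only at the designated vertices $a_{s_0,\ldots,s_k}$ and nowhere else, so that the resulting object is truly tree-like and its containment in $V$ outside the vertex set is clean. No new analytic input is required beyond the homeomorphism properties of the $G_s$ already established in Section~\ref{sec:parab}.
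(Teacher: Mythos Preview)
Your proposal is correct and takes essentially the same approach as the paper. In fact, the paper gives no explicit proof of this lemma at all: just as Lemma~\ref{lem:GammainL} is stated to be ``a direct consequence of the properties of the curves $\gamma_{s_0,\ldots,s_n}$, $\Gamma_{s_0,\ldots,s_k}$, listed above'', the parabolic analogue is simply stated, with the surrounding text noting that ``the only difference is that the curves $\gamma_s$ from Section~\ref{sec:top struct} are degenerated to the points $a_s$''---exactly the observation you make and flesh out.
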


The further parts of the proof of Theorem~B proceed exactly as in Subsection~\ref{subsec:near_inf} and Sections~\ref{sec:rel}--\ref{sec:est_der} for the attracting case. The only difference is that the curves $\gamma_s$ from Section~\ref{sec:top struct} are degenerated to the points $a_s$. Note that since we assume $\zeta \notin \overline{\Sing(f)}$, the maps $G_s$, $s \in \Z$, extend univalently to $\{z \in \C: \dist(z, \Gamma) < \delta\}$ for some $\delta>0$, which enables to prove an analogue of Lemma~\ref{lem:diameta} similarly as in the attracting case. The details are left to the reader.

\section{Proof of Theorem~C}
\label{sec:bounded}

In \cite{przyttycki86} (see also \cite{PUZ1}), Przytycki introduced a notion of RB-domains (RB standing for `repelling boundary') to generalize simply connected  components of attracting basins of hyperbolic rational maps to the case of maps that are not necessarily globally defined.

\begin{definition}[{\bf RB-domain}{}]\label{defn:RB}
Let $U\subset \mathbb{C}$ be a simply connected domain and suppose that $f$ is holomorphic on a neighbourhood $W$ of $\partial U$. We say that $U$ is an \emph{RB-domain} for $(f, W)$, if 
$f(W\cap U)\subset U$, $f(\partial U)=\partial U$ and $\partial U$ \emph{repels} to the side of $U$, that is, 
\[
\bigcap_{n=0}^\infty f^{-n}(W\cap \overline{U})=\partial U.
\]
\end{definition}

The following theorem by Przytycki \cite{przytycki06} is an extension of the results proved in \cite{zdunik90,zdunik91}.

\begin{theorem}[{\cite[Theorem~A']{przytycki06}}]
\label{thm:przytycki} Let $U\subset \mathbb{C}$ be a simply connected domain and suppose that $f$ is holomorphic on a neighbourhood $W$ of $\partial U$, such that $U$ is an RB-domain for $(f, W)$. Then either $\dimhyp \partial U>1$ or $\partial U$ is an analytic Jordan curve or an analytic arc.
\end{theorem}

The following lemma shows that the notion of RB-domains can be applied in the context of transcendental maps.

\begin{lemma}
\label{lem:RB-domain}
Let $U$ be a bounded invariant component of an attracting basin of a transcendental entire function $f$. Then $U$ is an RB-domain for $(f,W)$,  where $W$ is a small neighbourhood of $\bd U$. 
\end{lemma}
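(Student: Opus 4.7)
The plan is to take $W$ to be a thin tubular neighbourhood of $\partial U$ and verify the four conditions of Definition~\ref{defn:RB} one by one.

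First I would record the structural features needed. Since $U$ is a periodic (invariant) Fatou component of a transcendental entire function, $U$ is simply connected. As $U$ is invariant and lies in the basin of an attracting periodic point $\zeta$ of period $p$, the whole periodic orbit $\mathcal O = \{\zeta, f(\zeta), \ldots, f^{p-1}(\zeta)\}$ is contained in $U$: indeed, each point of $\mathcal O$ lies in $\mathcal F(f)$ and in $\overline U$, hence cannot belong to $\partial U \subset J(f)$. I would choose $\varepsilon > 0$ smaller than $\dist(\mathcal O, \partial U)$ and set
\[
W = \{z \in \C : \dist(z, \partial U) < \varepsilon\}.
\]
Then $W$ is an open neighbourhood of $\partial U$ on which $f$ is trivially holomorphic, $\overline W$ is disjoint from $\mathcal O$ with a positive gap, and $f(W \cap U) \subset f(U) \subset U$ by invariance.

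Next I would prove $f(\partial U) = \partial U$. The inclusion $f(\partial U) \subset \partial U$ follows because $\partial U \subset J(f)$ and $J(f)$ is completely invariant, so $f(\partial U) \subset J(f)$; combined with $f(\overline U) \subset \overline U$ (boundedness of $U$ and $f(U) \subset U$), this yields $f(\partial U) \subset \overline U \cap J(f) = \partial U$. For the reverse inclusion I would observe that $U$ is itself a component of $f^{-1}(U)$: any Fatou component mapped into $U$ must be an entire component of $f^{-1}(U)$, because the preimage of the Fatou set is the Fatou set. Then I would invoke Proposition~\ref{prop:swiss}: as $U$ is simply connected and bounded, $f|_U \colon U \to U$ is a proper holomorphic map of finite degree, so in particular $f(U) = U$. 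Passing to closures gives $f(\overline U) = \overline U$, and subtracting $U = f(U)$ from both sides produces $\partial U \subset f(\partial U)$.

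It remains to establish the repelling condition $\bigcap_{n=0}^\infty f^{-n}(W \cap \overline U) = \partial U$. The inclusion $\supset$ is immediate, since $f(\partial U) = \partial U \subset W \cap \overline U$ gives $\partial U \subset f^{-n}(W \cap \overline U)$ for every $n$. For the converse inclusion, take $z$ in the intersection; then $z \in \overline U$ and $f^n(z) \in W$ for every $n \geq 0$. If $z$ were in $U$, then $f^{np}(z) \to \zeta$ by definition of the attracting basin, and applying the continuous maps $f^k$ for $k = 0, \ldots, p-1$ would force $f^n(z)$ to accumulate on $\mathcal O$; since $\overline W$ is at positive distance from $\mathcal O$, this is incompatible with $f^n(z) \in W$ for all $n$, so $z \in \overline U \setminus U = \partial U$. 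The only mildly subtle step in the argument is the appeal to Proposition~\ref{prop:swiss} which upgrades the forward invariance $f(U) \subset U$ to the equality $f(U) = U$; everything else is a direct consequence of the Fatou--Julia dichotomy and of the attracting dynamics inside $U$.
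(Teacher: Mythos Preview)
Your proof is correct and uses the same essential input as the paper's argument, namely Proposition~\ref{prop:swiss}, which forces $f|_U$ to be a proper map of finite degree. The paper's proof is a one-line appeal to the conformal conjugacy of $f|_U$ with a finite Blaschke product on $\D$ (again via Proposition~\ref{prop:swiss}) and declares that this ``immediately provides the assertions''; you instead verify each item of Definition~\ref{defn:RB} by hand. In substance the two arguments coincide: the surjectivity $f(\partial U)=\partial U$ comes from properness (equivalently, from the Blaschke product mapping $\partial\D$ onto itself), and the repelling property comes from the attraction to $\zeta$ inside $U$. Your write-up has the advantage of not implicitly relying on any boundary extension of the Riemann map. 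A minor remark: since $U$ is assumed \emph{invariant}, the attracting periodic point it contains is necessarily a \emph{fixed} point, so your treatment of a full orbit $\mathcal O$ of period $p$ is more general than needed, though harmless.
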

\begin{proof}
Since $U$ is simply connected and bounded, Proposition~\ref{prop:swiss} implies that $f$ on $U$ is conformally conjugated by a Riemann map to a Blaschke product of a finite degree on the unit disc $\D$, with an attracting fixed point $z_0 \in \D$, which easily provides the assertion of the lemma. 
\end{proof}

Observe that if $f$ is a transcendental entire function and $U$ is a component of the attracting basin of $f$, then $\partial U$ cannot be an arc. Thus, in view of Lemma~\ref{lem:RB-domain} and Theorem~\ref{thm:przytycki}, to prove Theorem~C (assuming $p=1$ without loss of generality) it is enough to show the following fact.

\begin{lemma}[{\bf Brolin--Azarina lemma for attracting basins of transcendental entire maps}{}]\label{lem:bounded}
If $U$ is a bounded invariant component of the attracting basin of a transcendental entire function $f$, then $\bd U$ is not an analytic Jordan curve.
\end{lemma}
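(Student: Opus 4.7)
The plan is to derive a contradiction from the assumption that $\partial U$ is an analytic Jordan curve, by invoking the Brolin--Azarina classification recalled in the introduction. First I need $\partial U$ to be fully $f$-invariant, i.e.\ $f(\partial U)=\partial U$, as required by Azarina's theorem. Since $U$ is a bounded simply connected Fatou component and $f(U)\subset U$, Proposition~\ref{prop:swiss} forces $f|_U$ to be a proper map of finite degree onto $U$; equivalently, by Lemma~\ref{lem:RB-domain}, $U$ is an RB-domain. Either way, $f$ extends continuously to $\overline U$ and sends $\partial U$ onto $\partial U$.

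Azarina's theorem, applied with $\gamma=\partial U$, then leaves only two alternatives: either $f$ is conformally conjugate to a finite Blaschke product, or $\partial U$ is a level curve of an invariant Siegel disc of $f$. The Siegel disc alternative is ruled out immediately: the level curves of an invariant Siegel disc $S$ are the images of the circles $\{|z|=r\}$, $0<r<1$, under the linearizing Riemann map, and therefore lie in $S\subset \mathcal F(f)$; on the other hand $\partial U\subset J(f)$ because $U$ is a Fatou component, so these two inclusions are incompatible.

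The Blaschke product alternative fails because $f$ is transcendental entire. Since the conformal automorphisms of $\mathbb C$ are affine, a global conjugacy between $f$ and a finite Blaschke product $B$ would make $f$ affinely equivalent to the rational map $B$, hence either a polynomial (when $B$ is affinely equivalent to $z\mapsto z^n$) or a map possessing a pole, both of which contradict $f$ being transcendental and entire. Alternatively, Picard's great theorem shows that $f$ assumes every complex value, with at most one exception, infinitely often in any neighbourhood of infinity, while a finite Blaschke product takes each value only finitely often, ruling out any conjugacy. The only delicate step is fixing the exact interpretation of ``conformally conjugate to a finite Blaschke product'' in Azarina's statement and confirming that, under any reasonable reading, it is incompatible with transcendentality of $f$; once this is verified, both alternatives are excluded and $\partial U$ cannot be an analytic Jordan curve.
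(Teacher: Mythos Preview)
Your proposal is correct but takes a genuinely different route from the paper. The paper's appendix gives a self-contained argument that essentially reproves the relevant case of Azarina's theorem from scratch: starting from the Riemann map $\varphi\colon\mathbb D\to U$, it uses the analyticity of $\partial U$ and Schwarz reflection to extend $\varphi$ across $\partial\mathbb D$, then (Claim~1) pushes this extension to a transcendental entire function satisfying the global semiconjugacy $\varphi\circ B=f\circ\varphi$, forces $B(z)=z^d$, and through a chain of density and preimage arguments (Claims~2--4) concludes that $U$ would have to be completely invariant, contradicting Picard's theorem applied to the bounded domain $U$. Your approach instead invokes Azarina's result as a black box and disposes of its two alternatives in a few lines.

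What each approach buys: yours is dramatically shorter and conceptually cleaner, but it relies on a reference the paper itself describes as ``not widely known within the mathematical community,'' which is precisely why the authors chose to include a direct proof. The paper's version is self-contained and also makes visible the mechanism (the entire extension of $\varphi$ and the complete invariance of $U$) rather than hiding it inside a citation. One small point you could tighten: you flag the interpretation of ``conformally conjugate to a finite Blaschke product'' as the delicate step but then hedge. In Azarina's statement the conjugacy is global (M\"obius on $\widehat{\mathbb C}$), so $f$ would be rational; your Picard-based alternative argument (a degree-$d$ Blaschke product has every fibre of cardinality at most $d$, while a transcendental entire map has infinite fibres with at most one exception) already settles this cleanly and is worth stating as the main argument rather than as an aside.
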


As explained in Section~\ref{sec:intro}, this fact follows from Azarina's result \cite[Theorem]{Azarina89}. As her work is not widely known within the mathematical community, for the reader's convenience we include a version of the proof of Lemma~\ref{lem:bounded} in the appendix. For a significant part, our arguments follow closely those in the proof of \cite[Lemma~9.1]{brolin65} and \cite[Theorem]{Azarina89}, with some modifications in the final part.

\appendix
\section{Proof of the Brolin--Azarina lemma for transcendental attracting basins}

Assume that $U$ is a bounded invariant component of the attracting basin of a transcendental entire function $f$, and suppose to the contrary that $\partial U$ is an analytic Jordan curve. Let $\zeta\in U$ be the attracting fixed point of $f$. Consider a Riemann map $\varphi$ from $\D$ onto $U$
such that $\varphi(0)=\zeta$. Since $\partial U$ is a Jordan curve, the map $\varphi$ extends homeomorphically to $\overline{\D}$. Furthermore, by the analyticity of $\partial U$ and Schwarz reflection, $\varphi$ extends to a univalent map
\[
\varphi \colon \D_{\rho} \to \C,
\]
where $\D_{\rho} = \D(0,\rho)$ for some $\rho>1$. By Proposition~\ref{prop:swiss}, the map \begin{equation}
\label{eq:phi3}
B = \varphi^{-1} \circ f \circ \varphi
\end{equation}
is a Blaschke product of degree $d$ for some integer $d>1$, with an attracting fixed point at $0$, that is
\[
B(z)=a z^m \prod_{k=1}^{d-m}\frac{z - a_k}{1-\overline{a_k}z},
\]
where $1\leq m\leq d$, $a \in \C$, $\vert a\vert=1$ and $a_k \in \mathbb{D}$. Note that $B$ extends to the Riemann sphere $\clC$ as a rational map of degree $d$, with an attracting fixed point at $\infty$, such that $B(1/\overline{z}) = 1/\overline{B(z)}$, $z \in \C \setminus\{0\}$. In particular, $B(\clC \setminus \overline{\D}) \subset \clC \setminus \overline{\D}$ and 
\begin{equation}\label{eq:toDrho}
\text{for every compact set }K \subset \C \setminus \overline{\D} \text{ there exists } n \ge 0, \text{ such that }B^{-n}(K) \subset \D_\rho \setminus \overline{\D}.
\end{equation}
Moreover, the Schwarz--Pick lemma implies $B^{-1}(\D_\rho \setminus \overline{\D}) \subset \D_\rho \setminus \overline{\D}$.
Note that by \eqref{eq:phi3}, if $\rho$ is chosen sufficiently close to~$1$, then
\begin{equation}
\label{eq:semiconj}
\varphi\circ B = f \circ \varphi \quad \text{on } \D_{\rho}. 
\end{equation}
See Figure~\ref{fig:bounded-semiconj}.

\begin{figure}[ht!]
\includegraphics[width=0.65\textwidth]{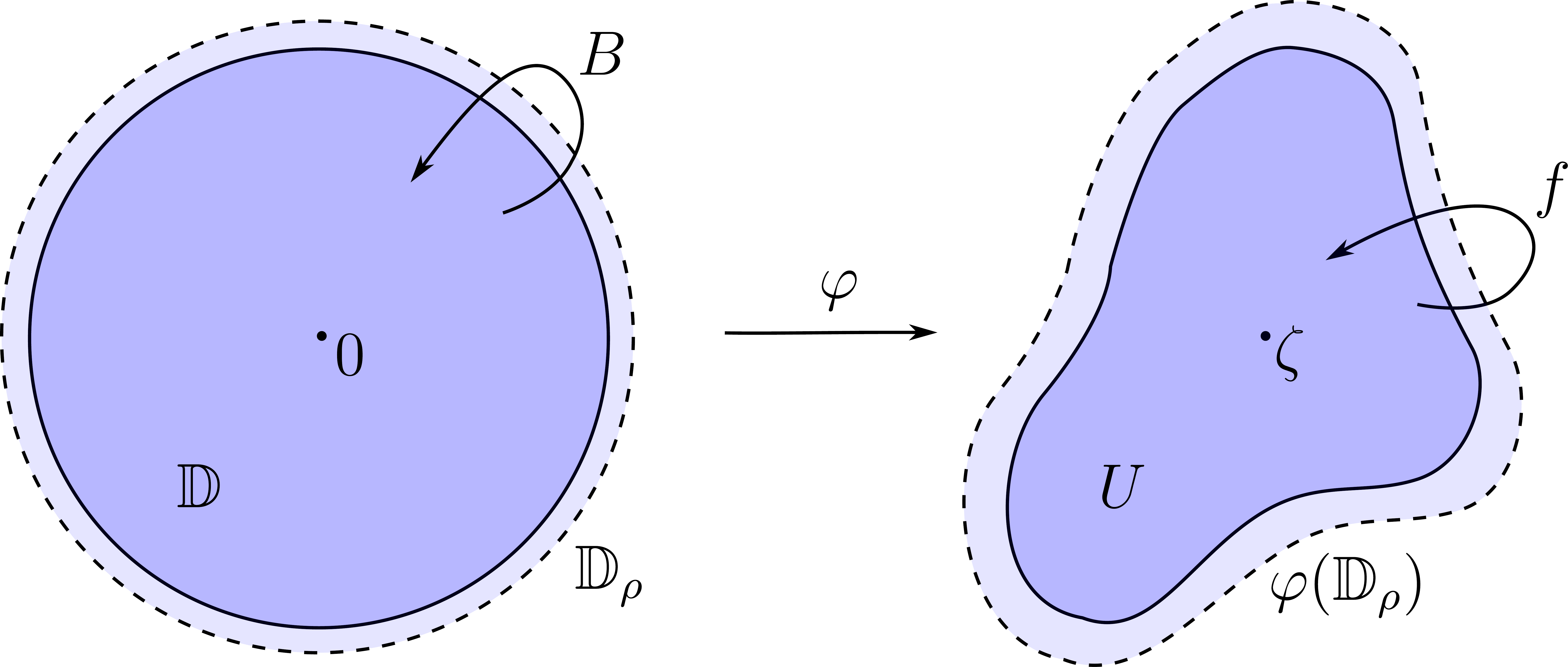}
\caption{The relation between the maps $B$ and $f$.}
\label{fig:bounded-semiconj}
\end{figure}

The rest of the proof is divided into several parts (claims).

\medskip
\noindent
\textit{Claim~$1$}. The map $\varphi$ can be extended to a transcendental entire function $\varphi\colon \C \to \C$ satisfying $\varphi\circ B = f \circ \varphi$. In particular, $B$ has no finite poles.\vspace{-5pt}\\

\noindent
\textit{Proof of Claim~$1$}. First, we extend $\varphi$ to $\C \setminus \mathcal P$, where $\mathcal P$ is the union of the forward orbits of the critical values of $B$ contained in $\C \setminus \overline{\D}$. Note that $\mathcal P$ is at most countable discrete subset of $\C \setminus \D_\rho$ (provided $\rho$ is chosen sufficiently close to~$1$). In particular, $\C \setminus (\overline{\D}\cup \mathcal P)$ is a domain containing $\D_\rho \setminus \overline{\D}$.

Let $z \in \C \setminus (\overline{\D}\cup \mathcal P)$. Note that all inverse branches of $B^n$, $n \in \N$, are defined in a neighbourhood $W$ of $z$. Hence, by \eqref{eq:toDrho}, there exists $n \ge 0$ and an inverse branch $g$ of $B^n$ defined on $W$, such that $g(z) \in \D_\rho \setminus \overline{\D}$. We set 
\[
\varphi = f^{n}\circ \varphi \circ g \quad  \text{on } W.
\]
We show that $\varphi$ is a well-defined holomorphic function on $\C \setminus (\overline{\D}\cup \mathcal P)$. For this, we should check that if $z \in \C \setminus (\overline{\D}\cup \mathcal P)$ and $g_1, g_2$ are inverse branches of $B^{n_1}$ and $B^{n_2}$, respectively, defined in a neighbourhood of $z$, such that $g_1(z), g_2(z) \in \D_\rho \setminus \overline{\D}$, then $\varphi_1(z) = \varphi_2(z)$ for
\[
\varphi_1 = f^{n_1} \circ \varphi \circ g_1, \qquad \varphi_2 = f^{n_2} \circ \varphi \circ g_2.
\]
To see it, connect $z$ to a point $z_0 \in \D_\rho \setminus \overline{\D}$ by a simple curve $\gamma \subset \C \setminus (\overline{\D}\cup \mathcal P)$ and extend the branches $g_1$ and $g_2$ holomorphically to a neighbourhood of $\gamma$. Take $v \in \gamma$ close to $z_0$, such that $v \in \D_\rho \setminus \overline{\D}$, and let $v_1 = g_1(v)$, $v_2 = g_2(v)$. As $B^{-1}(\D_\rho \setminus \overline{\D}) \subset \D_\rho \setminus \overline{\D}$, we have $v_1, B(v_1), \ldots, B^{n_1-1}(v_1) \in \D_\rho$ and $v_2, B(v_2), \ldots, B^{n_2-1}(v_2) \in \D_\rho$. Hence, \eqref{eq:semiconj} implies
\[
\varphi_1(v) = f^{n_1}(\varphi(v_1)) = \varphi(B^{n_1}(v_1)) = \varphi(v) = \varphi(B^{n_2}(v_2)) = f^{n_2}(\varphi(v_2)) = \varphi_2(v).
\]
Thus, by holomorphicity, $\varphi_1 = \varphi_2$ on $\gamma$, in particular $\varphi_1(z) = \varphi_2(z)$. This shows that $\varphi$ is well-defined and holomorphic in $\C \setminus (\overline{\D}\cup \mathcal P)$. It is obvious by definition that $\varphi$ is the extension of the map $\varphi|_{\D_\rho}$ defined previously. In this way we have extended $\varphi$ holomorphically to $\C \setminus \mathcal P$.

To show that $\varphi$ extends holomorphically to $\C$, consider a point $z \in \mathcal P$. Since $\mathcal P$ is discrete, $\varphi$ is defined in $W \setminus \{z\}$ for a small neighbourhood $W$ of $z$, and by \eqref{eq:toDrho}, we have $B^{-n}(W\setminus \{z\}) \subset \D_\rho$ for some $n > 0$, so $\varphi(W\setminus \{z\}) \subset f^n\circ \varphi(\overline{\D_\rho})$ by the definition of $\varphi$. Hence, $\varphi$ is bounded in $W\setminus \{z\}$, so it extends holomorphically to $W$. This shows that $\varphi$ extends to $\C$, so it is an entire function.  

To prove $\varphi\circ B = f \circ \varphi$, consider first $z \in B^{-1}(\C \setminus (\overline{\D}\cup\mathcal P))$. Then $z, B(z) \in \C \setminus (\overline{\D}\cup \mathcal P)$, so by \eqref{eq:toDrho}, there exists an inverse branch $g$ of $B^n$ defined in a neighbourhood of $z$ with $g(z) \in \D_\rho \setminus \overline{\D}$, and an inverse branch $h$ of $B$ defined in a neighbourhood of $B(z)$ with $h(B(z)) = z$. Then by the definition of $\varphi$, 
\[
\varphi(B(z)) = f^{n+1}(\varphi(g(h(B(z))))) =  f(f^n(\varphi(g(z)))) = f(\varphi(z)).
\]
Hence, we have $\varphi\circ B = f \circ \varphi$ on $B^{-1}(\C \setminus (\overline{\D}\cup\mathcal P))$. Together with \eqref{eq:semiconj}, this implies $\varphi\circ B = f \circ \varphi$ on $B^{-1}(\C \setminus \mathcal P)$. Since $B^{-1}(\mathcal P)$ is a discrete subset of $B^{-1}(\C)$, by continuity we have $\varphi\circ B = f \circ \varphi$ on $B^{-1}(\C)$.

Therefore, to show $\varphi\circ B = f \circ \varphi$ on $\C$, it is now sufficient to check $B^{-1}(\C) = \C$, which means that $B$ has no finite poles. To prove it, suppose otherwise and consider a pole $p \in \C$ of $B$. Then $\varphi\circ B = f \circ \varphi$ in $W \setminus \{p\}$ for some neighbourhood $W$ of $p$. Consider a sequence $w_n \in \C$, $w_n \to \infty$, and note that one can find a sequence $z_n \to p$, such that $B(z_n) = w_n$. Since $z_n \in W \setminus \{p\}$ for large $n$, we have 
\[
\varphi(w_n) = \varphi(B(z_n)) = f(\varphi(z_n)) \to f(\varphi(p)).
\]
This shows $\lim_{w\to \infty} \varphi(w) = f(\varphi(p)) \in \C$, so $\varphi$ extends to a rational map on $\clC$, with $\varphi(\infty) \in \C$. This implies $\varphi(\clC) \subset \C$, which is impossible by Liouville's theorem. Consequently, $B$ has no finite poles, which shows $\varphi\circ B = f \circ \varphi$ on $\C$. 

Finally, note that $\varphi$ is transcendental, because otherwise, it is a (non-constant) polynomial $\varphi$ and, consequently, $\varphi\circ B$ is rational, while $f \circ \varphi$ is transcendental, which contradicts the relation $\varphi\circ B = f \circ \varphi$. This ends the proof of the claim.\hspace*{\fill} $\blacktriangle$\\

By Claim~1, $B$ has no finite poles, so $B$ must be of the form $B(z)=az^d$, with $\vert a \vert =1$ and $d > 1$. By applying a linear change of variable we can assume $a=1$, so that
\begin{equation}\label{eq:z^d}
B(z)=z^d.
\end{equation}

\medskip
\noindent
\textit{Claim $2$}. For $r>0$ let
\[
\gamma_r = \varphi(\partial \mathbb{D}_r),
\]
where $\mathbb{D}_r = \mathbb{D}(0,r)$. Then for every $z\in \mathbb{C}\setminus \{0\}$,
\[
\bigcup_{n=0}^{\infty} f^{-n}(f^n(\varphi(z)))
\]
is dense in $\gamma_{|z|}$. Moreover, for every $r>0$, either $\gamma_r\subset J(f)$ or $\gamma_r\subset \mathcal F(f)$. \\

\noindent
\textit{Proof of Claim $2$}. Let $\xi\in\C$ be such that $\xi^{d^n}=1$ for some $n\in\N$. In particular, $\vert \xi z\vert =|z|$ and by \eqref{eq:z^d},
\[
B^n(\xi z)=\xi^{d^n} z^{d^n}=z^{d^n}= B^n(z).
\]
Hence, for each $n\in \N$, there are $d^n$ points of $B^{-n}(B^n(z))$, which are the endpoints of $d^n$ disjoint open arcs of equal length in the circle $\bd\mathbb{D}_{|z|}$, and so the set
\[
\bigcup_{n=0}^{\infty} B^{-n}(B^n(z))
\]
is dense in $\partial \mathbb{D}_{|z|}$. Since $f$ and $B$ are semiconjugate by $\varphi$, the set $\bigcup_{n=0}^{\infty} f^{-n}(f^n(\varphi(z)))$ is dense in $\gamma_{|z|}$, which proves the first part of the claim.

Suppose now $\gamma_r\not\subset \mathcal F(f)$ and take $z\in\gamma_r\cap J(f)$. Then 
\[
\bigcup_{n\geq 0} f^{-n}(f^n(z))\subset J(f)
\]
by the invariance of $J(f)$, and since this union is dense in $\gamma_r$ and $J(f)$ is closed, we obtain $\gamma_r\subset J(f)$. This proves the second part of the claim.
\hspace*{\fill} $\blacktriangle$\\
 
\medskip
\noindent
\textit{Claim $3$}. There is no $z\in\C\setminus\{0\}$ with $\varphi(z)=\zeta$.\\

\noindent
\textit{Proof of Claim $3$}. Suppose to the contrary that there exists $z\in\C\setminus\{0\}$ such that $\varphi(z)=\zeta$. Then $\zeta = \varphi(z) \in \gamma_{|z|}$ and, by Claim~2 and since $\zeta=f^n(\zeta)$ for all $n$, the set 
\[
\bigcup_{n=0}^\infty f^{-n}(\zeta) = \bigcup_{n=0}^\infty f^{-n}(f^n(\varphi(z)))
\]
is dense in $\gamma_{|z|}$. This is a contradiction, because an attracting fixed point cannot have preimages in a small punctured neighbourhood of itself. Hence, 
there is no $z\in\C\setminus\{0\}$ such that $\varphi(z)=\zeta$.
\hspace*{\fill} $\blacktriangle$\\ 

\noindent
\textit{Claim $4$}. The immediate component $U$ of the attracting basin of $\zeta$ is completely invariant.\\

\noindent
\textit{Proof of Claim $4$}.
Suppose $U$ is not completely invariant. Then there exist a component $U_1$ of $f^{-1}(U)$ and a component $U_2$ of $f^{-1}(U_1)$, such that $U, U_1, U_2$ are pairwise disjoint. Since $\varphi$ is entire, there exist $z_1, z_2 \in \C$ such that $\varphi(z_1) \in U_1$, $\varphi(z_2) \in U_2$. Fix $r > 1$ such that $r > \max(|z_1|, |z_2|)$. Then $\overline{U} \subset \varphi(\mathbb{D}_r)$, $U_1$ and $U_2$ intersect $\varphi(\mathbb{D}_r)$ and, by the maximum principle, $\bd \varphi(\mathbb{D}_r) \subset \gamma_r$. Suppose $U_j$ intersects $\gamma_r$ for some $j\in\{1,2\}$. Then by Claim~2, $\gamma_r \subset U_j$, in particular $\bd \varphi(\mathbb{D}_r)\subset U_j$. Hence, $U_j$ is not simply connected, which is impossible, as $U_j$ is a component of a preimage of the simply connected component $U$ under an entire map. Therefore, $U_1 \cup U_2 \subset \varphi(\mathbb{D}_r)$, so $U_1$ and $U_2$ are bounded. By Proposition~\ref{prop:swiss}, we have $f(U_2) = U_1$, $f(U_1) = U$, so one can find points $w_1 \in U_1$, $w_2 \in U_2$, such that $f(w_2) = w_1$, $f(w_1) = \zeta$. 

By Picard's theorem, there exists $z_1 \in \C$ with $\varphi(z_1) = w_1$, or there exists $z_2 \in \C$ with $\varphi(z_2) = w_2$. Note that in the second case the semiconjugation $\varphi\circ B =f \circ \varphi$ implies $\varphi(B(z_2)) = f(w_2) = w_1$, so in fact, the first case always holds. Consequently, by the semiconjugation relation, we have $\varphi(B(z_1)) = \zeta$. By Claim~3, there holds $B(z_1) = 0$, hence \eqref{eq:z^d} implies $z_1 = 0$. Consequently, $U_1 \ni w_1 = \varphi(z_1) = \varphi(0) = \zeta \in U$, which is a contradiction, as $U_1$ and $U$ are disjoint. Hence, $U$ is completely invariant.\hspace*{\fill} $\blacktriangle$\\ 

Finally, observe that by Claim~4, the function $f$ omits all values $w \in U$ on the set $\C \setminus U$, which contains a neighbourhood of infinity, since $U$ is bounded. This provides a contradiction with Picard's theorem, which finishes the proof of the Brolin--Azarina lemma for transcendental attracting basins (Lemma~\ref{lem:bounded}).

\bibliographystyle{plain}
\bibliography{dimhyp}

\end{document}